\numberwithin{equation}{section}
\newtheoremstyle{mystyle}
{} 
{} 
{\it} 
{} 
{\bf} 
{} 
{ } 
{} 
\theoremstyle{mystyle}
\newtheorem{Thm}{Theorem}[section] 
\newtheorem{Obs}{Observation}[section]
\newtheorem{Cor}[Thm]{Corollary}
\newtheorem{Lem}[Thm]{Lemma}
\newtheorem{Prop}[Thm]{Proposition}
\newtheorem{Def}[Thm]{Definition}
\newtheorem{Remark}{Remark}[section]
\newtheorem{Example}{Example}[section]
\newcommand{\lbc}{[{\hspace{-0.15em}[}}
\newcommand{\rbc}{]{\hspace{-0.15em}]}}
\newcommand{\wk}{\mbox{{\textit{1}}{\hspace{-0.3em}$k$}}}
\newcommand{\wl}{\mbox{{$\ell$}{\hspace{-0.27em}{$l$}}}}
\newcommand{\ba}{\mbox{\boldmath $a$}}
\newcommand{\bc}{\mbox{\boldmath $c$}}
\newcommand{\br}{\mbox{\boldmath $r$}}
\newcommand{\bzero}{\mbox{\boldmath $0$}}
\title{Finite beta-expansions of natural numbers}
\author{Fumichika Takamizo}
\date{\empty}
\begin{document}

\maketitle

\begin{abstract}
Let $\beta>1$. 
For $x \in [0,\infty)$, we have so-called the {\it{beta-expansion}} 
of $x$ in base $\beta$ as follows: 
$$x= \sum_{j \leq k} x_{j}\beta^{j} 
= x_{k}\beta^{k}+ \cdots + x_{1}\beta+x_{0}+x_{-1}\beta^{-1} 
+ x_{-2}\beta^{-2} + \cdots$$
where $k \in \mathbb{Z}$, $\beta^{k} \leq x < \beta^{k+1}$, 
$x_{j} \in \mathbb{Z} \cap [0,\beta)$ for all $j \leq k$ and 
$\sum_{j \leq n}x_{j}\beta^{j}<\beta^{n+1}$ for all $n \leq k$. 
In this paper, we give a sufficient condition (for $\beta$) such that 
each element of $\mathbb{N}$ has the finite beta-expansion in base $\beta$. 
Moreover we also find a $\beta$ with this finiteness property 
which does not have positive finiteness property. 
\end{abstract}

\footnote[0]{Key Words and Phrases: beta-expansion, Pisot number, finiteness property (F$_{1}$), positive finiteness property, shift radix system.}
\footnote[0]{2020 Mathematics Subject Classification: 11K16, 11A63}
\footnote[0]{This work was partly supported by MEXT Promotion of Distinctive 
Joint Research Center Program JPMXP0619217849.}

\section{Introduction}

When $\beta>1$ is an integer, it is readily seen that each element of $\mathbb{Z}[1/\beta] \cap [0, \infty)$ has a finite expansion in base $\beta$. As a generalization of this property, Frougny and Solomyak proposed three kinds of finiteness properties (F), (PF) and (F$_{1}$) described in detail later. The property (F$_{1}$), which is the condition that each natural number has a finite beta-expansion in base $\beta$, is not yet well-understood. 
In this paper, we find a $\beta$ with property (F$_{1}$) and without property (PF). 

We describe notions and definitions in more detail. 
For $\beta>1$, define the $\beta$-transformation $T:[0,1] \to [0,1)$ by $T(x) = \{\beta x\}$ where $\{y\}$ is the fractional part of $y$. Let $x_{0}=x \in [0,1]$, $x_{n}=T(x_{n-1})$ and $c_{n}=\lfloor \beta x_{n-1}\rfloor$ where $\lfloor y\rfloor$ is the integer part of $y$. Then we have the expansion of $x$, that is, 
$$x = c_{1}\beta^{-1}+c_{2}\beta^{-2}+\cdots + c_{n}\beta^{-n}+\cdots = \sum_{n=1}^{\infty} c_{n}\beta^{-n},$$
and write 
$$d_{\beta}(x)=c_{1}c_{2}\cdots c_{n}\cdots.$$
In particular, $d_{\beta}(1)$ is called the {\it{R\'enyi expansion}}. 
Since $T(1) = \beta-\lfloor \beta \rfloor = \{\beta\}$, 
we have 
$$d_{\beta}(1) = \lfloor \beta \rfloor d_{\beta}(\{\beta\}).$$

Now we give the definition of the beta-expansion of $x \geq 0$. 
Let $\mathbb{N}_{0}=\mathbb{N} \cup \{0\}$. 
Define $L:[0,\infty) \to \mathbb{N}_{0}$ by 
$$L(x):= \min\{n\in \mathbb{N}_{0} \ | \ x \beta^{-n}<1\}.$$
So $x \in [0,1)$ if and only if $L(x) = 0$. In this paper, 
{\it{the beta-expansion of $x$}} is defined as follows: for $x \in [0,\infty)$, 
the beta-expansion of $x$ is given by 
$$x= c_{1}\beta^{L(x)-1}+\cdots + c_{L(x)-1}\beta+c_{L(x)} 
+ c_{L(x)+1}\beta^{-1}+\cdots  
= \sum_{n=1}^{\infty} c_{n}\beta^{L(x)-n}$$
where $d_{\beta}(\beta^{-L(x)}x) = c_{1}c_{2} \cdots$. 
In other articles, the usual beta-expansion of $x$ is given by 
\begin{equation*}
\begin{split}
x &= x_{k}\beta^{k}+ \cdots + x_{1}\beta+x_{0}+x_{-1}\beta^{-1} 
+ x_{-2}\beta^{-2} + \cdots = \sum_{n \leq k}x_{n}\beta^{n} \\
&= x_{k} \cdots x_{0}.x_{1} \cdots \ \mbox{(symbolically)} \\
\end{split}
\end{equation*}
where $k=k(x) \in \mathbb{Z}$, $x_{k}>0$, $x_{j} \in [0,\beta) \cap \mathbb{Z}$ for all $j \leq k$ and $\sum_{j \leq n}x_{j}\beta^{j}<\beta^{n+1}$ for all $n \leq k$. For example, 
$$\mbox{the beta-expansion of $1$}=1.0^{\infty}.$$
In contrast, $L(x) = 0\geq k(x)$ if $x<1$; $L(x)-1=k(x)$ if $x\geq 1$. 
Moreover $c_{L(x)-n} = x_{n}$ for all $n \leq k(x)$. Clearly, we have 
$$x=\beta^{L(x)}\sum_{n=1}^{\infty}c_{n}\beta^{-n}.$$
So our notation of the beta-expansion is like a {\it{floating-point representation}}. 

For $x \in [0,1]$, $d_{\beta}(x)=c_{1}c_{2}\cdots$ is called {\it{finite}} if $\#\{ n \in \mathbb{N} \ | \ c_{n}\neq 0\} < \infty$. We say that $x \geq 0$ has the {\it{finite beta-expansion}} if $d_{\beta}(\beta^{-L(x)}x)$ is finite. Denote by $\mbox{Fin}(\beta)$ the set of nonnegative number $x$ such that $x$ has the finite beta-expansion. 

\begin{wrapfigure}{r}{45mm}
\centering
\caption{{The inclusion relations of finiteness properties}}\label{inclusion}
\begin{tikzpicture}
\node(q0) at (0,0) {(F)};
\node(q1) at (0,0.8) {(PF)};
\node(q2) at (0,1.5) {(F$_{1}$)};
\node(q3) at (0,1.95) {}; 
\draw(0,0) circle(0.5);
\draw(0,0.32) circle(0.95); 
\draw(0,0.55) circle(1.35);
\end{tikzpicture}
\end{wrapfigure}
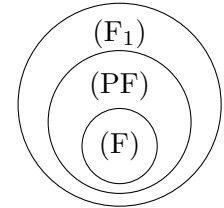
If $\beta>1$ is a natural number, then 
$\mbox{Fin}(\beta) = \mathbb{Z}[1/\beta] \cap [0,\infty)$. 
In \cite{frougny1}, Frougny and Solomyak studied the problem of what $\beta$ satisfies this condition and more generally, introduced the following conditions: 
\begin{equation*}
\begin{array}{ll}
\mbox{(F$_{1}$)} & \mathbb{N}_{0} \subset \mbox{Fin}(\beta) \\
\mbox{(PF)} & \mathbb{N}_{0} [1/\beta] \subset \mbox{Fin}(\beta) \\
\mbox{(F)} & \mathbb{Z}[1/\beta] \cap [0,\infty) \subset \mbox{Fin}(\beta) \\
\end{array}
\end{equation*}
By definition, if $\beta$ has property (F), then $\beta$ has property (PF); if $\beta$ has property (PF), then $\beta$ has property (F$_{1}$) (see Figure \ref{inclusion}). 

According to \cite{akiyama1,frougny1}, 
if $\beta$ has property (F$_{1}$), 
then $\beta$ is a Pisot number. 
Here an algebraic integer $\beta$ is called a Pisot number 
if $\beta>1$ and all conjugates other than $\beta$ 
have modulus less than $1$. 

The property (F) is also called the {\it{finiteness property}}. 
Several sufficient conditions for property (F) are known. 
Let $\beta$ be an algebraic integer with minimal polynomial 
$x^{d}-a_{d-1}x^{d-1} - \cdots -a_{1}x-a_{0}$. 
If $a_{d-1} \geq \cdots \geq a_{1} \geq a_{0} \geq 1$, 
then $\beta$ has property (F) and this $\beta$ is called of Frougny-Solomyak type 
(\cite{frougny1}). 
If $a_{d-1} > a_{d-2} + \cdots +a_{0}$ and $a_{j} \geq 0$ for all $j$, 
then $\beta$ has property (F) and this $\beta$ is 
called of Hollander type (\cite{hollander}). 
Moreover in \cite{akiyama1}, Akiyama characterized cubic Pisot units with property (F). 

The property (PF) is also called the {\it{positive finiteness property}}. 
In \cite{frougny1}, it is shown that 
each Pisot number $\beta$ with degree $2$ has property (PF). 
Furthermore in \cite{akiyama2}, 
Akiyama gave complete characterization of property (PF) without (F). 

Bassino has found $d_{\beta}(1)$ for all cubic Pisot numbers (\cite{bassino}). 
Moreover we remark the following proposition. 

\begin{Prop}\label{CPcase}
Let $\beta$ be a cubic Pisot number with property (F$_{1}$). Then 
$\beta$ has property (PF) without (F) if and only if 
$d_{\beta}(1)$ is not finite. 
\end{Prop}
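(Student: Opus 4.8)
The plan is to separate the two features that $d_\beta(1)$ controls: its finiteness governs property (F), while the room left over — cut down by the standing hypothesis (F$_1$) — governs property (PF). The cornerstone is the general implication that \emph{property (F) forces $d_\beta(1)$ to be finite}, which I would prove first, since it is elementary and already delivers the ``without (F)'' content of the statement. After that, the two residual implications ``$d_\beta(1)$ infinite $+$ (F$_1$) $\Rightarrow$ (PF)'' and ``$d_\beta(1)$ finite $+$ (F$_1$) $\Rightarrow$ (F)'' are what force the equivalence, and these I would settle in the cubic case by matching explicit data.

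For the cornerstone lemma I would argue as follows. Write $d_\beta(1) = c_1 c_2 \cdots$ and put $\{\beta\} = \beta - \lfloor\beta\rfloor = T(1)$. Dividing the minimal equation $\beta^3 = a_2\beta^2 + a_1\beta + a_0$ by $\beta^2$ gives $\beta = a_2 + a_1\beta^{-1} + a_0\beta^{-2}$, so $\mathbb{Z}[\beta]\subset\mathbb{Z}[1/\beta]$ and hence $\{\beta\}\in\mathbb{Z}[1/\beta]\cap[0,\infty)$. Since $\beta$ is not an integer, $0<\{\beta\}<1$, so $L(\{\beta\})=0$ and $d_\beta(\{\beta\}) = c_2 c_3 \cdots$. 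If $\beta$ has property (F), then $\{\beta\}\in\mathrm{Fin}(\beta)$, i.e. $c_2 c_3\cdots$ is finite, whence $d_\beta(1)$ is finite. Contrapositively, if $d_\beta(1)$ is not finite then $\beta$ lacks (F); this gives the ``not (F)'' clause of the ``$\Leftarrow$'' direction outright, and supplies the contradiction needed in ``$\Rightarrow$'' (combined with the finite case below).

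To complete the two directions I would invoke the explicit cubic data. Using Bassino's determination of $d_\beta(1)$ for all cubic Pisot numbers, I would split the $\beta$ satisfying (F$_1$) according to whether $d_\beta(1)$ is finite, reading off in each branch the inequalities the coefficients $a_2,a_1,a_0$ of $x^3-a_2x^2-a_1x-a_0$ must obey. In the finite branch I would match these inequalities against the sufficient conditions for (F) (Frougny--Solomyak type and Hollander type) together with Akiyama's characterisation of cubic Pisot numbers with (F), to conclude that, \emph{under (F$_1$)}, finiteness of $d_\beta(1)$ already forces (F); with the lemma this yields (F) $\Leftrightarrow$ $d_\beta(1)$ finite. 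In the infinite branch I would instead match the coefficient inequalities against Akiyama's characterisation of property (PF) without (F), to conclude that every such $\beta$ with (F$_1$) does enjoy (PF); together with the lemma (which gives ``not (F)'') this is exactly ``(PF) without (F)''. Assembling: $d_\beta(1)$ infinite yields (PF) and not (F), while $d_\beta(1)$ finite yields (F), hence not ``(PF) without (F)''.

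The main obstacle I anticipate is the bookkeeping in this last step: one must overlay three independent classifications — Bassino's list of $d_\beta(1)$, Akiyama's region for (F), and Akiyama's region for (PF)-without-(F) — and verify that the extra hypothesis (F$_1$) trims the coefficient space to precisely the range on which ``$d_\beta(1)$ infinite'' coincides with Akiyama's (PF)-without-(F) region, leaving no stray cases at the boundary. Determining exactly how (F$_1$) restricts $(a_2,a_1,a_0)$, and checking that this restriction meshes cleanly with both of Akiyama's regions, is the delicate point; the lemma and the trivial inclusions $\mathrm{(F)}\Rightarrow\mathrm{(PF)}\Rightarrow\mathrm{(F}_1)$ are only the easy scaffolding around it.
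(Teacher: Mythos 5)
Your cornerstone lemma is fine: (F) forces $d_\beta(1)$ to be finite, by exactly the argument you give, and this delivers the ``without (F)'' half of the backward direction. But the two ``residual implications'' you build the equivalence on are not both true. The implication ``$d_\beta(1)$ finite $+$ (F$_1$) $\Rightarrow$ (F)'' is refuted by the paper's own main theorem: for $\beta$ a root of $x^3-2tx^2+2tx-t$ ($t\geq 2$) one has $d_\beta(1)=(2t-2)(2t-2)(t-1)00t0^{\infty}$, which is finite, and $\beta$ has (F$_1$), yet $\beta$ fails (PF) and hence fails (F). What the forward direction actually needs is the weaker statement ``$d_\beta(1)$ finite $+$ \emph{(PF)} $\Rightarrow$ (F)'', which is the nontrivial half of Frougny--Solomyak's theorem (quoted in the paper as Theorem \ref{charaPF}) and genuinely uses (PF) rather than (F$_1$); the paper simply cites it. Your plan of deducing (F) from (F$_1$) plus finiteness of $d_\beta(1)$ by overlaying coefficient regions therefore cannot succeed; note also that Akiyama's cubic characterisation of (F) is only for Pisot \emph{units}, so the classification you want to match against does not exist for general cubic Pisot numbers.

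The second gap is in the backward direction, which you correctly flag as the delicate point but for which you supply no mechanism. Bassino's region where $d_\beta(1)$ is infinite consists of three coefficient families (I), (II), (III), while Akiyama's (PF)-without-(F) region is only the subfamily (II) with $c<0$. To conclude (PF) you must eliminate the excess cases (I), (III) and (II) with $c>0$ using the hypothesis (F$_1$) --- but (F$_1$) admits no known description as a coefficient region, so there is nothing to ``overlay''. The paper's actual work here (Lemma \ref{CPcase_lem} together with Remark \ref{floor_beta+1}) is to exhibit, in each excess case, the concrete natural number $\lfloor\beta\rfloor+1$ and to prove that its beta-expansion is not finite by computing the orbit of $(0,-1)$ under the shift radix system map $\tau$ and showing it lands on $(0,1)$, which cannot reach $\bzero$ when $d_\beta(1)$ is infinite. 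Without some such explicit witness of the failure of (F$_1$) in those cases, the equivalence cannot be closed.
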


The main purpose in this paper is to prove the following theorem. 

\begin{Thm}\label{main1}
Let $\beta>1$ be the root of $x^{3}-2tx^{2}+2tx-t$ 
$(t \in \mathbb{N} \cap [2,\infty))$. 
Then $\beta$ has the property (F$_{1}$) without (PF). 
\end{Thm}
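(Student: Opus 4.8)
The plan is to prove Theorem 1.2 in two parts: first establish property (F$_1$), then show that property (PF) fails. Before anything else, I would compute the Rényi expansion $d_\beta(1)$ for $\beta$, the root of $x^3-2tx^2+2tx-t$ with $t\geq 2$. Note $\lfloor\beta\rfloor = 2t-1$ (since $\beta$ is slightly less than $2t$ for large $t$), so I would carefully run the $\beta$-transformation a few steps to obtain the (likely eventually periodic) sequence $d_\beta(1)=c_1c_2\cdots$. This computation is the backbone of everything, and I expect $d_\beta(1)$ to be \emph{infinite} (eventually periodic but not terminating), which by Proposition 1.1 is exactly what produces (PF) without (F) in the cubic case --- so getting the right expansion is essential and nonroutine.

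\textbf{Establishing (F$_1$).} The goal is $\mathbb{N}_0\subset\mathrm{Fin}(\beta)$, i.e.\ every natural number has a terminating greedy expansion. Since it suffices (by additivity of carries) to check that each power $\beta^n$ reduces finitely and that sums close up, the cleanest route is an inductive/algorithmic argument: show that starting from any $N\in\mathbb{N}$, the greedy algorithm terminates. I would translate the finiteness question into the language of a \emph{shift radix system} (SRS), as hinted by the keyword list: the companion matrix of the minimal polynomial, together with the associated parameter vector $\br=(r_1,\dots,r_{d-1})$, gives an SRS map $\tau_{\br}$, and (F$_1$) corresponds to the statement that the SRS orbit of every integer vector of the form arising from $\mathbb{N}$ reaches $\bzero$. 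For the specific family $x^3-2tx^2+2tx-t$ I would compute the corresponding $\br$ explicitly and then prove that the relevant orbits are eventually $\bzero$, either by exhibiting an explicit Lyapunov-type decreasing quantity (e.g.\ a norm that strictly decreases along the orbit once the coordinates are large) or by reducing to finitely many residual cases checked by hand.

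\textbf{Ruling out (PF).} Here I would exploit Proposition 1.1 directly: since $\beta$ is a cubic Pisot number (I must first verify it \emph{is} Pisot --- checking that the two non-real or non-dominant conjugates have modulus $<1$, using the constant term $t$ and the sign pattern of the coefficients), and since I will have shown (F$_1$) holds, Proposition 1.1 says (PF) holds without (F) \emph{if and only if} $d_\beta(1)$ is not finite. Thus proving (PF) \emph{fails} is equivalent, under (F$_1$), to something I must be careful about: I actually want (PF) to fail, which means I cannot be in the "(PF) without (F)" regime, so I need to show instead that $\beta$ lacks (PF) altogether. Concretely I would produce an explicit element of $\mathbb{N}_0[1/\beta]$ --- a number of the form $m\beta^{-k}$ --- whose greedy expansion is infinite, giving a direct witness that $\mathbb{N}_0[1/\beta]\not\subset\mathrm{Fin}(\beta)$. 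The natural candidate comes from the nonfinite structure of $d_\beta(1)$: if $d_\beta(1)$ is eventually periodic with a genuine period, then the tail of the expansion of $1$ itself (shifted by a suitable power of $\beta$) furnishes such a witness.

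\textbf{Main obstacle.} The hard part will be the (F$_1$) proof. Verifying Pisot-ness and disproving (PF) are comparatively mechanical once $d_\beta(1)$ is in hand, but showing that \emph{every} natural number terminates requires a uniform argument valid for all $t\geq 2$ simultaneously --- the SRS orbits depend on $t$, so I need a termination certificate that is either $t$-uniform or reduces to a bounded check. I expect the technical heart to be finding the right contracting quantity (or the right finite set of "small" states) so that the greedy/SRS dynamics can be shown to descend to $\bzero$ without infinitely many nonzero digits, and carrying this through cleanly across the whole parameter family.
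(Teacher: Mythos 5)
There are two genuine gaps in your plan, both of which would derail it. First, your strategy for (F$_1$) --- recast the problem as a shift radix system and then prove termination of ``the relevant orbits'' via a Lyapunov-type contracting quantity or a finite residual check --- cannot work as stated, because for this family the SRS $\tau$ does \emph{not} have the global finiteness property: the paper shows that $(1,1)$ is a fixed point of $\tau$ (the set $P_\beta$ of periodic nonzero states reachable in $Q_\beta$ is exactly $\{(1,1)\}$), so there is an orbit that never reaches $\bzero$. This is forced: a globally terminating SRS would give property (F), which $\beta$ provably lacks. The entire difficulty, and the actual content of the paper, is to characterize \emph{which} states arise from natural numbers and to show that this subset avoids the bad cycle. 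That characterization is Theorem \ref{key} together with Corollary \ref{cor1}: a carry-propagation analysis (via free block decompositions of $d_\beta^*(1)$) showing $\{x+1\}_\beta-\{x\}_\beta=\theta-\sum_j\omega_jT^j(1)$, hence that every $\{N\}_\beta$ lies in the image of the set $V_\beta$ of nonpositive integer combinations of the $\tau$-orbit of $(0,1)$; Theorem \ref{main2} then reduces $V_\beta\subset F_\beta$ to two finite checks, namely $\tau^{-1}(P_\beta)\subset P_\beta$ and a bounded box of $V_\beta$ landing in $F_\beta$. Your proposal contains no mechanism for isolating the reachable states, and without one the argument collapses at the fixed point $(1,1)$.

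Second, your expectation that $d_\beta(1)$ is infinite is false: the paper computes $d_\beta(1)=(2t-2)(2t-2)(t-1)00t0^{\infty}$, which is finite. Consequently your proposed witness against (PF) --- a shifted tail of the expansion of $1$ --- does not exist in the form you imagine, and in any case a tail $T^n(1)$ lies in $\mathbb{Z}[\beta]$ but is not obviously an element of $\mathbb{N}_0[1/\beta]$, which is what a (PF)-counterexample must be. The paper instead rules out (PF) mechanically from Akiyama's characterization (Theorem \ref{pf}): the minimal polynomial is $x^3-(\lfloor\beta\rfloor+1)x^2+2tx-t$ with constant coefficient of the wrong sign for the (PF)-without-(F) form, and (F) fails as well; the explicit element of $\mathbb{N}_0[1/\beta]$ with an infinite expansion is then constructed separately and by hand (it is $(2t-2)+(2t-2)\beta^{-1}+(t-1)\beta^{-2}+(t-1)\beta^{-4}+(t-1)\beta^{-5}+(t-1)\beta^{-6}$, not a tail of $d_\beta(1)$). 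You do correctly flag the need to verify that $\beta$ is Pisot and that $\lfloor\beta\rfloor=2t-1$, both of which the paper checks via Lemma \ref{cp} and Observation \ref{obs:4(-4)2}.
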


This paper is structured in the following way. In section \ref{sec:key}, we define the fractional part of the beta-expansion of $x$ (so-called {\it{$\beta$-fractional part of $x$}}) by 
$$\{x\}_{\beta} = \sum_{n=1}^{\infty} c_{L(x)+n}\beta^{-n} = c_{L(x)+1}\beta^{-1}+c_{L(x)+2}\beta^{-2}+\cdots$$
and describe $\{x+1\}_{\beta}-\{x\}_{\beta}$ by using $T^{n}(1)$ $(n \in \mathbb{N}_{0})$. In section \ref{sec:main2}, we give a sufficient condition for property (F$_{1}$) (see Theorem \ref{main2}). In this section, we use the transformation $\tau$ on $\mathbb{Z}^{d-1}$ introduced in \cite{akiyama3,akiyama4,hollander}, which is conjugate to the transformation $T$ on $\mathbb{Z}[\beta] \cap [0,1)$, when $\beta$ is an algebraic integer with degree $d$. In section \ref{sec:main1}, we prove Theorem \ref{main1} by using the sufficient condition in section \ref{sec:main2}. Finally we provide the proof of Proposition \ref{CPcase}. 

\section{Preliminary}\label{sec:key}

Let $\beta>1$ be a real number and define $\nu$ by 
$$\nu(c_{1}c_{2} \cdots ) = \sum_{n=1}^{\infty} c_{n}\beta^{-n}$$
where $c_{1}c_{2} \cdots \in \mathbb{Z}$ satisfies 
$\sum_{n\geq 1} |c_{n}|\beta^{-n} < \infty$. 
For $x \in [0,\infty)$, 
define {\it{$\beta$-fractional part of $x$}} by 
$$\{x\}_{\beta} = \nu(c^{}_{L(x)+1}c^{}_{L(x)+2} \cdots) \ \mbox{where} \ 
d_{\beta}(\beta^{-L(x)}x)=c_{1}c_{2} \cdots.$$
We aim to prove the following theorem. 

\begin{Thm}\label{key}
For any $x \in [0,\infty)$, 
there exist $\theta \in \{0,1\}$, $q \in \mathbb{N}$ 
and $\omega_{j} \in \mathbb{N}_{0}$ $(j \in \lbc0,q\rbc)$ such that 
$$\{x+1\}_{\beta}-\{x\}_{\beta} 
= \theta - \sum_{j=0}^{q} \omega_{j} T^{j}(1).$$
\end{Thm}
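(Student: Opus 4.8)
The plan is to reduce the statement to an analysis of how the integer part of the greedy expansion changes when $1$ is added, and then to feed the resulting carries into the orbit $T^{j}(1)$. First I would record the elementary identity that organizes everything. Writing $\lfloor y\rfloor_{\beta}:=y-\{y\}_{\beta}$ for the $\beta$-integer part, one has $\lfloor y\rfloor_{\beta}\in\mathbb{N}_{0}[\beta]$ (it is the polynomial $\sum_{n=1}^{L(y)}c_{n}\beta^{L(y)-n}$ in $\beta$ with nonnegative integer coefficients), while $\{y\}_{\beta}=T^{L(y)}(\beta^{-L(y)}y)\in[0,1)$. Subtracting the two decompositions $x+1=\lfloor x+1\rfloor_{\beta}+\{x+1\}_{\beta}$ and $x=\lfloor x\rfloor_{\beta}+\{x\}_{\beta}$ yields
\[
\{x+1\}_{\beta}-\{x\}_{\beta}=1-\bigl(\lfloor x+1\rfloor_{\beta}-\lfloor x\rfloor_{\beta}\bigr),
\]
so everything reduces to controlling the increment $D:=\lfloor x+1\rfloor_{\beta}-\lfloor x\rfloor_{\beta}\in\mathbb{Z}[\beta]$ of the greedy integer part, and I must show $D=(1-\theta)+\sum_{j=0}^{q}\omega_{j}T^{j}(1)$ with $\theta\in\{0,1\}$ and $\omega_{j}\in\mathbb{N}_{0}$.

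Second, I would isolate the mechanism linking integer-part carries to $T^{j}(1)$. Writing $d_{\beta}(1)=t_{1}t_{2}\cdots$, the R\'enyi relation $1=\sum_{m\ge1}t_{m}\beta^{-m}$ gives, for every $j\ge0$,
\[
\beta^{j}=\sum_{m=1}^{j}t_{m}\beta^{\,j-m}+\sum_{m\ge1}t_{j+m}\beta^{-m}=\Bigl(\sum_{m=1}^{j}t_{m}\beta^{\,j-m}\Bigr)+T^{j}(1),
\]
where the first sum lies in $\mathbb{N}_{0}[\beta]$ and the tail is exactly $T^{j}(1)\in[0,1)$. Thus each time the normalization to a greedy expansion ``spends'' one copy of $\beta^{j}$ by splitting it into lower-order digits, it deposits precisely $T^{j}(1)$ into the fractional register while leaving an integer amount above. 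Iterating this identity also shows $\beta^{j}=\sum_{i=0}^{j}\gamma_{j,i}T^{i}(1)$ with $\gamma_{j,i}\in\mathbb{N}_{0}$, so that $D$ at least admits \emph{some} representation as an integer combination of the $T^{i}(1)$; the real content of the theorem is that the combination can be arranged to respect the prescribed signs.

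Third, I would run an induction on $L(x)$. If adding $1$ to the greedy digits of $x$ leaves the admissibility condition $\sum_{j\le n}x_{j}\beta^{j}<\beta^{n+1}$ intact at every level, then the fractional digits are untouched and $\{x+1\}_{\beta}=\{x\}_{\beta}$, giving $\theta=0$ and all $\omega_{j}=0$. Otherwise the excess at position $0$ must be normalized by a cascade of carries; here I would strip the leading digit (which lowers $L(x)$ without altering $\{x\}_{\beta}$) until the carry reaches the top, at which point a single application of the conversion identity above introduces one term $T^{j}(1)$ and passes to a number with strictly smaller $L$, to which the inductive hypothesis applies. The main obstacle is exactly the sign control in this cascade: I must show that the carries can only deposit $T^{j}(1)$ terms with a coherent orientation, so that after collecting every $\omega_{j}$ is nonnegative, and that the leftover constant is forced into $\{0,1\}$. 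I expect to settle this by using the admissibility inequalities to bound the carry digit produced at each level, ensuring that a bounded, sign-definite amount is transferred per level and that the process terminates after finitely many steps with $\theta\in\{0,1\}$.
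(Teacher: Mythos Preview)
Your reformulation is correct and useful: with $D:=\lfloor x+1\rfloor_\beta-\lfloor x\rfloor_\beta$ one has $\{x+1\}_\beta-\{x\}_\beta=1-D$, and the theorem is equivalent to saying that $D$ lies in the additive monoid generated by $\{T^{j}(1):j\ge0\}$. Your identity $\beta^{j}=\sum_{m=1}^{j}t_m\beta^{\,j-m}+T^{j}(1)$ is also correct and shows that each $\beta^{j}$ (hence each of $\lfloor x\rfloor_\beta$ and $\lfloor x+1\rfloor_\beta$ separately) lies in that monoid. But this does \emph{not} give the result for the difference $D$, since a monoid is not closed under subtraction; and this is exactly the ``sign control'' you flag as the main obstacle without resolving. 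Your inductive scheme does not close the gap: stripping the leading digit $c_1\beta^{\ell-1}$ replaces $x$ by $x'=x-c_1\beta^{\ell-1}$, but then you must re-add $c_1\beta^{\ell-1}$ to $x'+1$, and that addition can itself trigger a carry cascade whose interaction with the inductive hypothesis is precisely the unresolved sign issue. Moreover, your description ``passes to a number with strictly smaller $L$'' is not what happens when the carry reaches the top: it creates a new leading digit and $L$ increases.

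What the paper does instead is replace the ad hoc induction by a structural tool: the \emph{free block decomposition} of the digit string of $x$ into maximal prefixes of $d_\beta^{*}(1)$. A carry generated at position $\ell$ does not crawl one place at a time; it jumps in one step from block boundary $k_{i+1}$ (or $\ell$) to $k_i$, and the paper's Lemma~2.3 shows that this single jump changes the fractional part by exactly $-\xi(\ell-k_i+1)$ (one $T^{m}(1)$ term, with the correct sign) while keeping the intermediate value in $[0,1)$. Iterating over the at most $i$ block boundaries gives the finite sum $\sum\omega_j T^{j}(1)$ with all $\omega_j\ge0$, and the parameter $\theta\in\{0,1\}$ records whether $\ell$ is strictly inside the last block or at its right endpoint. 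This decomposition is the missing ingredient in your plan: it is what turns the vague hope that ``admissibility inequalities bound the carry digit'' into a precise statement (Lemma~2.3 and its proof via Lemma~2.2(3),(4)) that forces each carry contribution to be a single nonnegative multiple of some $T^{m}(1)$.
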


\subsection{Fundamental property of beta expansion}\label{subsec:fund_lem}

In this subsection, we collect well-known results and notions. 
For $m,n \in \mathbb{Z}$ with $m\leq n$, 
let $\lbc m,n \rbc = [m,n] \cap \mathbb{Z}$ and 
\begin{equation*}
\mathbb{A} := \left\{ 
\begin{array}{ll}
\lbc 0,\beta-1 \rbc & \mbox{if $\beta$ is an integer} \\
\lbc 0,\lfloor \beta \rfloor\rbc & \mbox{if $\beta$ is not an integer.} \\
\end{array}
\right. 
\end{equation*}
Define {\it{the shift}} $\sigma : \mathbb{A}^{\mathbb{N}} \to \mathbb{A}^{\mathbb{N}}$ by 
$$\sigma(c_{1}c_{2} \cdots ) = c_{2}c_{3} \cdots.$$
Note that $\sigma$ is continuous on $\mathbb{A}^{\mathbb{N}}$. 
Moreover by definition of $T$, we have 
$$\sigma(d_{\beta}(x)) = d_{\beta}(T(x)) \ \mbox{for $x \in [0,1]$}.$$
That is, we get the following commutative diagram: 
\begin{equation*}
\begin{array}{ccc}\vspace{0.1cm}
[0,1] & \overset{T}{\longrightarrow} & [0,1] \\ \vspace{0.1cm}
d_{\beta} \Big\downarrow \quad & & \quad \Big\downarrow d_{\beta} \\
\mathbb{A}^{\mathbb{N}} & \overset{\sigma}{\longrightarrow} & \mathbb{A}^{\mathbb{N}}. \\
\end{array}
\end{equation*}
We often use the notation $\bc=c_{1}c_{2} \cdots$. 
Denote by $<_{lex}$ the lexicographic order on $\mathbb{N}_{0}^{\mathbb{N}}$. 
For $\bc,\bc' \in \mathbb{N}_{0}^{\mathbb{N}}$, 
$\bc \leq_{lex} \bc'$ means $\bc <_{lex} \bc'$ or $\bc=\bc'$. 
Then $d_{\beta}$ is order preserving, that is, 

\begin{Remark}[(\cite{lothaire})]\label{lothaire}
Let $x,y \in [0,1]$. 
$x<y$ if and only if 
$d_{\beta}(x)<_{lex}d_{\beta}(y)$. 
\end{Remark}

The metric $\rho$ on $\mathbb{N}_{0}^{\mathbb{N}}$ is defined by 
$$\rho(\bc, \bc') 
= (\inf \{n \in \mathbb{N} \ | \ c_{n} \neq c_{n}'\})^{-1}.$$
Note that $\nu$ is continuous on $\mathbb{A}^{\mathbb{N}}$. 
Define 
$$d_{\beta}^{*}(x) := \lim_{y \uparrow x}d_{\beta}(y) \ 
\mbox{for each $x \in (0,1]$.}$$
Then $\sigma(d_{\beta}^{*}((0,1])) \subset d_{\beta}^{*}((0,1])$. 
Let 
$$D_{\beta}: = d_{\beta}([0,1)).$$
By using $d_{\beta}^{*}(1)$, 
we can determine whether or not any 
$c_{1}c_{2}\cdots \in \mathbb{N}_{0}^{\mathbb{N}}$ is in $D_{\beta}$. 

\begin{Thm}[(\cite{parry1,itotaka1})]\label{parry}
$\bc \in \mathbb{N}_{0}^{\mathbb{N}}$ 
belongs to $D_{\beta}$ if and only if 
$\sigma^{n}(\bc) <_{lex} d_{\beta}^{*}(1)$ 
for all $n \in \mathbb{N}_{0}$. 
\end{Thm}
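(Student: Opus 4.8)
The plan is to prove the two implications separately, using the order-preserving property of $d_{\beta}$ (Remark \ref{lothaire}), the commutation $\sigma \circ d_{\beta} = d_{\beta}\circ T$, and the identity $\nu(d_{\beta}(y))=y$ for $y\in[0,1]$; the latter gives, by continuity of $\nu$, that $\nu(d_{\beta}^{*}(1))=\lim_{y\uparrow 1}\nu(d_{\beta}(y))=1$. For necessity, suppose $\bc=d_{\beta}(x)$ with $x\in[0,1)$. The commutative diagram yields $\sigma^{n}(\bc)=d_{\beta}(T^{n}(x))$ with $T^{n}(x)\in[0,1)$. Since $d_{\beta}^{*}(1)$ is the $\rho$-limit of the lexicographically increasing family $\{d_{\beta}(y)\}$ as $y\uparrow 1$, it is their lexicographic supremum; choosing $y$ with $T^{n}(x)<y<1$ and applying Remark \ref{lothaire} gives $d_{\beta}(T^{n}(x))<_{lex}d_{\beta}(y)\leq_{lex}d_{\beta}^{*}(1)$, so $\sigma^{n}(\bc)<_{lex}d_{\beta}^{*}(1)$ for every $n$.

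For the converse, assume $\sigma^{n}(\bc)<_{lex}d_{\beta}^{*}(1)$ for all $n\in\mathbb{N}_{0}$, and write $d_{\beta}^{*}(1)=t_{1}t_{2}\cdots$. Its first digit is $t_{1}=\max\mathbb{A}$, so the hypothesis applied to the shift $\sigma^{n-1}(\bc)$ forces $c_{n}\leq t_{1}$; hence $\bc\in\mathbb{A}^{\mathbb{N}}$ and $x:=\nu(\bc)$ converges. Put $r_{n}:=\nu(\sigma^{n}(\bc))$, so that $r_{0}=x$ and $\beta r_{n}=c_{n+1}+r_{n+1}$. I would reduce the whole statement to the single claim that $r_{n}<1$ for every $n\geq 0$: granting this, each $r_{n}\in[0,1)$ forces $c_{n+1}=\lfloor \beta r_{n}\rfloor$ and $r_{n+1}=\{\beta r_{n}\}=T(r_{n})$, so inductively $r_{n}=T^{n}(x)$ and $c_{n+1}=\lfloor \beta T^{n}(x)\rfloor$. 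This is precisely the recursion defining $d_{\beta}$, so $\bc=d_{\beta}(x)$ with $x=r_{0}<1$, i.e. $\bc\in D_{\beta}$. As the admissibility hypothesis is inherited by every shift $\sigma^{m}(\bc)$, it suffices to prove the core claim: any $\bc'$ with $\sigma^{n}(\bc')<_{lex}d_{\beta}^{*}(1)$ for all $n$ satisfies $\nu(\bc')<1$.

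This core claim is the main obstacle, because $\nu$ is not monotone for $<_{lex}$ on all of $\mathbb{A}^{\mathbb{N}}$ (the tail identifications responsible for the gap between $d_{\beta}(1)$ and $d_{\beta}^{*}(1)$), so one cannot simply deduce $\nu(\bc')<\nu(d_{\beta}^{*}(1))=1$ from $\bc'<_{lex}d_{\beta}^{*}(1)$. I would let $k$ be the first index with $c_{k}'\neq t_{k}$; admissibility forces $c_{k}'\leq t_{k}-1$, and a direct computation gives $1-\nu(\bc')=\beta^{-k}\bigl[(t_{k}-c_{k}')+\nu(\sigma^{k}(d_{\beta}^{*}(1)))-\nu(\sigma^{k}(\bc'))\bigr]$. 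The point is to bound the tail $\nu(\sigma^{k}(\bc'))$ from above, which I would do in two steps. First, setting $S:=\sup\{\nu(\bc'):\bc'\ \text{admissible}\}$ (finite since digits are bounded), the identity together with $(t_{k}-c_{k}')+\nu(\sigma^{k}(d_{\beta}^{*}(1)))\geq 1$ and $k\geq 1$ gives $S\leq 1+\beta^{-1}(S-1)$, which forces $S\leq 1$ since $\beta>1$. Second, feeding $\nu(\sigma^{k}(\bc'))\leq S\leq 1$ back into the identity yields $1-\nu(\bc')\geq\beta^{-k}\nu(\sigma^{k}(d_{\beta}^{*}(1)))$, and this is strictly positive because $d_{\beta}^{*}(1)$, being a quasi-greedy expansion, has infinitely many nonzero digits, so $\sigma^{k}(d_{\beta}^{*}(1))\neq 0^{\infty}$ and $\nu(\sigma^{k}(d_{\beta}^{*}(1)))>0$. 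Thus $\nu(\bc')<1$, completing the argument. The only external fact I would need to record is that $d_{\beta}^{*}(1)$ is never eventually zero.
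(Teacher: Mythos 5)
The paper offers no proof of this statement: Theorem \ref{parry} is quoted from Parry and from Ito--Takahashi, so there is no internal argument to compare against, and your proposal has to stand on its own --- which it does. Your necessity half is the standard one: the conjugacy $\sigma\circ d_{\beta}=d_{\beta}\circ T$ plus the observation that $d_{\beta}^{*}(1)$, being the $\rho$-limit of the lexicographically increasing family $d_{\beta}(y)$ as $y\uparrow 1$, lexicographically dominates every $d_{\beta}(y)$ with $y<1$ (that a $\rho$-limit of a lexicographically increasing family is an upper bound deserves its one-line verification, but it is true and routine). The converse is where blind attempts typically fail, namely by inferring $\nu(\bc)<\nu(d_{\beta}^{*}(1))=1$ directly from $\bc<_{lex}d_{\beta}^{*}(1)$, which is false in general since $\nu$ is not monotone for $<_{lex}$; you correctly isolate this as the crux. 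Your bootstrap is sound: at the first disagreement index $k\geq 1$ one has the identity $1-\nu(\bc')=\beta^{-k}\bigl[(t_{k}-c_{k}')+\nu(\sigma^{k}(d_{\beta}^{*}(1)))-\nu(\sigma^{k}(\bc'))\bigr]$ (using $\nu(d_{\beta}^{*}(1))=1$, which the paper itself records), the hypothesis is shift-invariant so $\sigma^{k}(\bc')$ is again admissible with $\nu(\sigma^{k}(\bc'))\leq S$, and with $t_{k}-c_{k}'\geq 1$ this yields $S\leq 1+\beta^{-1}(S-1)$ when $S\geq 1$, forcing $S\leq 1$; feeding $S\leq 1$ back gives $1-\nu(\bc')\geq\beta^{-k}\nu(\sigma^{k}(d_{\beta}^{*}(1)))>0$. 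The one external fact you flag --- that $d_{\beta}^{*}(1)$ is never eventually zero --- is in fact available inside the paper: by Remark \ref{d_1}, either $d_{\beta}^{*}(1)=d_{\beta}(1)$ is infinite, or it is purely periodic with period $d_{1}\cdots d_{q-1}(d_{q}-1)$, whose letter $d_{1}=\lfloor\beta\rfloor\geq 1$ is nonzero when $q\geq 2$, while $q=1$ forces $\beta\in\mathbb{Z}$, $\beta\geq 2$, and the period $\beta-1\geq 1$. Finally, your reduction of the converse to the single claim $\nu(\bc')<1$, via $\beta r_{n}=c_{n+1}+r_{n+1}$ and uniqueness of the splitting into an integer plus an element of $[0,1)$, reconstructs exactly the greedy recursion $c_{n+1}=\lfloor\beta T^{n}(x)\rfloor$ by which the paper defines $d_{\beta}$, so $\bc=d_{\beta}(x)$ with $x=\nu(\bc)<1$. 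In short: a correct, self-contained proof, in the classical Parry style, of a result the paper only cites.
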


Now we have the following. 

\begin{Remark}[(\cite{lothaire} Chapter 7, cf. \cite{mine1})]\label{d_1}
\begin{equation*}
d_{\beta}^{*}(1) = \left\{ 
\begin{array}{ll}
(d_{1}\cdots d_{q-1}(d_{q}-1))^{\infty} & \mbox{if $d_{\beta}(1)=d_{1}d_{2} \cdots d_{q}0^{\infty}$} \\
d_{\beta}(1) & \mbox{otherwise.} \\
\end{array}
\right. 
\end{equation*}
\end{Remark}

Since $\nu(d_{\beta}(y))=y$ for $y \in [0,1)$, we have by the continuity of $\nu$, 
$$\nu(d_{\beta}^{*}(x)) = \lim_{y \uparrow x} \nu(d_{\beta}(y)) =x.$$
Thus $d_{\beta}^{*}$ is injective on $(0,1]$ (cf. \cite{mine1}). Let 
$$\xi(n) := \nu(\sigma^{n-1}(d_{\beta}^{*}(1))) \ (n \in \mathbb{N}).$$

\begin{Obs}\label{obs1}
$d_{\beta}^{*}(\xi(n)) = \sigma^{n-1}(d_{\beta}^{*}(1))$. 
\end{Obs}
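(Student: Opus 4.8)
The plan is to invert the definition of $\xi$ by combining the two facts already recorded for $d_{\beta}^{*}$ just above: the forward invariance $\sigma(d_{\beta}^{*}((0,1])) \subset d_{\beta}^{*}((0,1])$ and the left-inverse relation $\nu(d_{\beta}^{*}(x)) = x$, valid for every $x \in (0,1]$. The underlying idea is that $\sigma^{n-1}(d_{\beta}^{*}(1))$ is automatically of the form $d_{\beta}^{*}(z)$ for some admissible $z \in (0,1]$, and that applying $\nu$ forces that $z$ to equal $\xi(n)$.

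First I would show, by induction on $n \in \mathbb{N}$, that $\sigma^{n-1}(d_{\beta}^{*}(1)) \in d_{\beta}^{*}((0,1])$. For $n=1$ this is the trivial statement that $d_{\beta}^{*}(1) \in d_{\beta}^{*}((0,1])$, as $1 \in (0,1]$. For the inductive step, if $\sigma^{n-1}(d_{\beta}^{*}(1)) \in d_{\beta}^{*}((0,1])$, then $\sigma^{n}(d_{\beta}^{*}(1)) = \sigma(\sigma^{n-1}(d_{\beta}^{*}(1))) \in \sigma(d_{\beta}^{*}((0,1])) \subset d_{\beta}^{*}((0,1])$. Hence for each $n$ there exists $z \in (0,1]$ with $d_{\beta}^{*}(z) = \sigma^{n-1}(d_{\beta}^{*}(1))$.

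Finally I would apply $\nu$ to both sides of this identity: the left-hand side gives $\nu(d_{\beta}^{*}(z)) = z$ by the left-inverse relation, while the right-hand side gives $\nu(\sigma^{n-1}(d_{\beta}^{*}(1))) = \xi(n)$ by the definition of $\xi$. Therefore $z = \xi(n)$, and substituting back into $d_{\beta}^{*}(z) = \sigma^{n-1}(d_{\beta}^{*}(1))$ yields the desired equality $d_{\beta}^{*}(\xi(n)) = \sigma^{n-1}(d_{\beta}^{*}(1))$. There is no serious obstacle in this argument; the only point requiring care is that $d_{\beta}^{*}$ is defined on $(0,1]$ but not at $0$, so one must ensure the preimage $z$ lies in $(0,1]$. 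This is exactly what the invariance statement provides, and the computation $z = \xi(n)$ then also confirms $\xi(n) \in (0,1]$, so that the left-hand side is well defined; note that injectivity of $d_{\beta}^{*}$ is not even needed.
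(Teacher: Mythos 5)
Your argument is correct and is essentially the paper's own proof: both use the forward invariance $\sigma(d_{\beta}^{*}((0,1]))\subset d_{\beta}^{*}((0,1])$ to write $\sigma^{n-1}(d_{\beta}^{*}(1))$ as $d_{\beta}^{*}(z)$ and then identify $z$ with $\xi(n)$ via $\nu\circ d_{\beta}^{*}=\mathrm{id}$ (the paper phrases this last step as the injectivity of $d_{\beta}^{*}$, which is equivalent in effect). Your added remarks about the induction and about $z\in(0,1]$ are fine elaborations of the same argument.
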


\begin{proof}
Notice that $\sigma^{n-1}(d_{\beta}^{*}(1)) \in d_{\beta}^{*}((0,1])$ because $\sigma(d_{\beta}^{*}((0,1]) ) \subset d_{\beta}^{*}((0,1])$. Hence by definition of $\xi(n)$ and the injectivity of $d_{\beta}^{*}$, we get the assertion. 
\end{proof}

In the following, we write 
$$d_{\beta}^{*}(1):=d_{1}d_{2} \cdots.$$

\begin{Lem}\label{fund_lem}
 \ 
\begin{description}
\item{(1)} \ $\{ \xi(n) \ | \ n \in \mathbb{N}\} 
= \{ T^{m}(1) \ | \ m \in \mathbb{N}_{0}\} \setminus \{0\}$. 
\item{(2)} \ $d_{n}d_{n+1}\cdots \leq_{lex} d_{\beta}^{*}(1)$ 
for all $n \in \mathbb{N}$. 
\item{(3)} \ If $\bc=c_{1}c_{2}\cdots \in \mathbb{N}_{0}^{\mathbb{N}}$ 
satisfies $\bc \geq_{lex} d_{n}d_{n+1}\cdots$, then 
$\nu(\bc) \geq \xi(n)$. 
\item{(4)} \ If $\bc=c_{1}c_{2}\cdots \in D_{\beta}$ 
satisfies $\bc <_{lex} d_{n}d_{n+1}\cdots$, then 
$\nu(\bc) < \xi(n)$. 
\end{description}
\end{Lem}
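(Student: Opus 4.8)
The plan is to establish the four parts in the order (2), (3), (4), (1). The first three rest only on Theorem \ref{parry}, on the identity $\xi(n)=\nu(\sigma^{n-1}(d_\beta^{*}(1)))=\nu(d_{n}d_{n+1}\cdots)$ which is immediate from the definition of $\xi$, and on the elementary bound $\xi(m)\in(0,1]$: indeed $\sigma^{m-1}(d_\beta^{*}(1))\in d_\beta^{*}((0,1])$ since $\sigma(d_\beta^{*}((0,1]))\subset d_\beta^{*}((0,1])$, and $\nu(d_\beta^{*}(z))=z$, so $\xi(m)=\nu(\sigma^{m-1}(d_\beta^{*}(1)))\in(0,1]$. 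Part (1), which pins each $\xi(n)$ down to an actual value $T^{m}(1)$, is the real obstacle and I treat it last.

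For (2) I would argue by a closure/limit argument. For every $y\in(0,1)$ we have $d_\beta(y)\in D_\beta$, so Theorem \ref{parry} gives $\sigma^{n-1}(d_\beta(y))<_{lex}d_\beta^{*}(1)$. Letting $y\uparrow 1$, by definition $d_\beta(y)\to d_\beta^{*}(1)$ in the metric $\rho$, and $\sigma^{n-1}$ is continuous; since a strict lexicographic inequality at a first coordinate of disagreement persists for nearby sequences, the relation $\leq_{lex}$ passes to the $\rho$-limit and yields $d_{n}d_{n+1}\cdots=\sigma^{n-1}(d_\beta^{*}(1))\leq_{lex}d_\beta^{*}(1)$.

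For (3) and (4) I would compare $\nu$-values through the first coordinate of disagreement. Writing $\mathbf{d}^{(n)}=d_{n}d_{n+1}\cdots$, in (3) either $\bc=\mathbf{d}^{(n)}$ (whence $\nu(\bc)=\xi(n)$) or there is a least $j$ with $c_{i}=d_{n+i-1}$ for $i<j$ and $c_{j}\geq d_{n+j-1}+1$; then, using $c_{i}\geq 0$ and $\sum_{i>j}d_{n+i-1}\beta^{-i}=\beta^{-j}\xi(n+j)$,
\[
\nu(\bc)-\xi(n)\geq \beta^{-j}-\beta^{-j}\xi(n+j)=\beta^{-j}\bigl(1-\xi(n+j)\bigr)\geq 0 ,
\]
because $\xi(n+j)\leq 1$. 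In (4), with $\bc\in D_\beta$ and $\bc<_{lex}\mathbf{d}^{(n)}$, the least index $j$ of disagreement satisfies $c_{j}\leq d_{n+j-1}-1$, and dropping the nonnegative digits $d_{n+i-1}$ ($i>j$) gives
\[
\xi(n)-\nu(\bc)\geq \beta^{-j}-\beta^{-j}\nu(\sigma^{j}(\bc))=\beta^{-j}\bigl(1-\nu(\sigma^{j}(\bc))\bigr)>0 ,
\]
where $\nu(\sigma^{j}(\bc))<1$ because $\sigma^{j}(\bc)\in D_\beta$ (a shift of an element of $D_\beta$ is again in $D_\beta$ by Theorem \ref{parry}) and $\nu(D_\beta)=[0,1)$. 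This settles (3) and (4).

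The main work is (1). I would reduce it to the single identity $\sigma^{m}(d_\beta^{*}(1))=d_\beta^{*}(T^{m}(1))$ for every $m\in\mathbb{N}_{0}$ with $T^{m}(1)>0$. Granting this, $\nu(d_\beta^{*}(z))=z$ gives $\xi(m+1)=\nu(\sigma^{m}(d_\beta^{*}(1)))=T^{m}(1)$; and since $\xi(n)\in(0,1]$ always, while $T^{m}(1)=0$ exactly when $d_\beta(1)$ is finite of length $\leq m$, the two sets coincide. To prove the identity I would split on whether $d_\beta(1)$ is finite. If $d_\beta(1)$ is infinite, then $d_\beta^{*}(1)=d_\beta(1)$, the commuting diagram gives $\sigma^{m}(d_\beta(1))=d_\beta(T^{m}(1))$, and $d_\beta(T^{m}(1))$ is again infinite (else $d_\beta(1)$ would terminate), so it equals $d_\beta^{*}(T^{m}(1))$. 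The delicate case is $d_\beta(1)=d_{1}\cdots d_{q}0^{\infty}$ finite: here $d_\beta^{*}(1)=W^{\infty}$ with $W=d_{1}\cdots d_{q-1}(d_{q}-1)$ by Remark \ref{d_1}, while for $0\leq m<q$ one has $d_\beta(T^{m}(1))=d_{m+1}\cdots d_{q}0^{\infty}$, and I would invoke the generalization of Remark \ref{d_1} to an arbitrary finite expansion, namely $d_\beta^{*}(y)=e_{1}\cdots e_{p-1}(e_{p}-1)\,d_\beta^{*}(1)$ when $d_\beta(y)=e_{1}\cdots e_{p}0^{\infty}$ (proved by the same left-limit computation that yields Remark \ref{d_1}), to get $d_\beta^{*}(T^{m}(1))=d_{m+1}\cdots d_{q-1}(d_{q}-1)W^{\infty}$. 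Reading off the cyclic shift, this equals $\sigma^{m}(W^{\infty})=\sigma^{m}(d_\beta^{*}(1))$, closing the argument. Verifying this generalized form of Remark \ref{d_1} and the bookkeeping that matches the two eventually-periodic words is the step I expect to be the main obstacle.
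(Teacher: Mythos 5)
Your proposal is correct, and while its computational core (first-coordinate-of-disagreement comparisons, and the identity $\sigma^{m}(d_{\beta}^{*}(1))=d_{\beta}^{*}(T^{m}(1))$) matches the paper's, the logical organization is genuinely different. The paper proves (1) first and leans on it for (2) and (3), obtaining $\xi(n)\leq 1$ from $\xi(n)=T^{m}(1)$; you instead extract $\xi(m)\in(0,1]$ directly from $\sigma(d_{\beta}^{*}((0,1]))\subset d_{\beta}^{*}((0,1])$ and $\nu\circ d_{\beta}^{*}=\mathrm{id}$, which lets you dispose of (2)--(4) before touching (1). Your (2) is a closure argument applying Theorem \ref{parry} to $d_{\beta}(y)$ with $y\uparrow 1$, whereas the paper applies Remark \ref{lothaire} to get $d_{\beta}(\xi(n)-\epsilon)\leq_{lex}d_{\beta}(1-\epsilon)$ and passes to the limit; your (4) is a direct digit comparison using $\nu(\sigma^{j}(\bc))<1$ for $\bc\in D_{\beta}$, while the paper squeezes $\bc<_{lex}d_{\beta}(\xi(n)-\epsilon)$ for small $\epsilon$. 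All of these are sound. For (1) both you and the paper reduce to $\sigma^{m}(d_{\beta}^{*}(1))=d_{\beta}^{*}(T^{m}(1))$; the paper simply asserts the finite-expansion case ($d_{\beta}^{*}(\xi(n))=d_{\beta}^{*}(T^{n-1}(1))$ for $n\in\lbc 1,q\rbc$), and you correctly identify that it rests on the generalization of Remark \ref{d_1} to an arbitrary finite expansion, which is the standard fact and is proved exactly by the left-limit computation you sketch. The only point to tighten is the end of (1): in the finite case you must also account for $\xi(n)$ with $n>q$ to get the inclusion $\{\xi(n)\mid n\}\subset\{T^{m}(1)\mid m\}\setminus\{0\}$; this follows at once from the periodicity of $W^{\infty}$, since $\sigma^{n-1}(W^{\infty})=\sigma^{k-1}(W^{\infty})$ with $n\equiv k\pmod{q}$ gives $\xi(n)=\xi(k)=T^{k-1}(1)$ --- a one-line remark the paper does spell out.
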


\begin{proof}
(1) If $d_{\beta}(1)=d_{\beta}^{*}(1)$, then $T^{n-1}(1) \neq 0$ and 
so by Observation \ref{obs1}, 
$$d_{\beta}^{*}(\xi(n)) = \sigma^{n-1}(d_{\beta}^{*}(1)) 
= \sigma^{n-1} d_{\beta}(1) = d_{\beta}(T^{n-1}(1)).$$
Therefore in this case, 
$$\xi(n) = \nu(d_{\beta}^{*}(\xi(n))) 
= \nu(d_{\beta}(T^{n-1}(1))) = T^{n-1}(1).$$
Consider the case $d_{\beta}(1) \neq d_{\beta}^{*}(1)$. 
Then by Remark \ref{d_1}, there is $q \in \mathbb{N}$ 
such that $T^{q}(1) = 0$ and $T^{q-1}(1)>0$. 
Therefore for any $n \in \lbc1,q\rbc$, 
$$d_{\beta}^{*}(\xi(n)) = d_{\beta}^{*}(T^{n-1}(1))$$
and so by taking $\nu$, 
$$\xi(n) = T^{n-1}(1).$$
On the other hand, if $n >q$, 
then 
$$d_{\beta}^{*}(\xi(n)) = d_{\beta}^{*}(\xi(k)) 
= d_{\beta}^{*}(T^{k-1}(1))$$
and so $\xi(n) = T^{k-1}(1)$ 
where $n \equiv k \mod{q}$. 

(2) By (1), there is $m \in \mathbb{N}_{0}$ such that $\xi(n) = T^{m}(1)$. 
Thus $\xi(n) \leq 1$. 
So, since $d_{\beta}(\xi(n)-\epsilon) \leq_{lex} d_{\beta}(1-\epsilon)$ 
by Remark \ref{lothaire}, we have 
$$d_{\beta}^{*}(\xi(n)) = 
\lim_{\epsilon \downarrow 0} d_{\beta}(\xi(n)-\epsilon) 
\leq_{lex} \lim_{\epsilon \downarrow0} d_{\beta}(1-\epsilon) 
\leq_{lex} d_{\beta}^{*}(1).$$

(3) It suffices to show that 
if $\bc >_{lex} d_{n}d_{n+1} \cdots$, then $\nu(\bc)\geq \xi(n)$. Let 
$$m:=\min\{ j \in \mathbb{N} \ | \ c_{j} \neq d_{n+j-1} \}.$$
Then $c_{m} > d_{n+m-1}$. So, since 
$c_{1}c_{2}\cdots c_{m-1}=d_{n}d_{n+1}\cdots d_{n+m-2}$, we have 
\begin{equation*}
\begin{split}
&\quad \ \nu(\bc)-\nu(d_{n}d_{n+1}\cdots) \\
&= \beta^{-m+1}(\nu(c_{m}c_{m+1} \cdots)-\nu(d_{n+m-1}d_{n+m} \cdots) ) \\
&\geq \beta^{-m+1}+\beta^{-m+1}(\nu(c_{m+1}c_{m+2} \cdots) 
- \nu(d_{n+m}d_{n+m+1}\cdots) ) \\
&\geq \beta^{-m+1} - \beta^{-m+1}\nu(d_{n+m}d_{n+m+1}\cdots)=:K \\
\end{split}
\end{equation*}
By (1), $\nu(d_{n+m}d_{n+m+1}\cdots) = \xi(n+m) = T^{p}(1)$ 
for some $p \in \mathbb{N}_{0}$. So 
$$K =\beta^{-m} -\beta^{-m}T^{p}(1) \geq 0.$$

(4) Let $\bc <_{lex} d_{n}d_{n+1} \cdots$. 
By Observation \ref{obs1} and definition of $d_{\beta}^{*}$, 
$$\bc <_{lex} d_{\beta}(\xi(n)-\epsilon)$$
for some $\epsilon>0$. Therefore we have 
$$\nu(\bc)<\xi(n)-\epsilon < \xi(n).$$
Hence we get the assertion. 
\end{proof}

Let 
$$\mathscr{X}:= \left\{ \sum_{m=1}^{p}u_{m}\xi(m) \ \Biggl| \ p \in \mathbb{N}, \ u_{m} \in \mathbb{N}_{0} \ (m \in \lbc1,p\rbc) \right\}.$$

As a corollary of Lemma \ref{fund_lem}-(1), we have the following. 

\begin{Remark}\label{fund_rem}
$$\mathscr{X} = \left\{ \sum_{j=0}^{q}\omega_{j}T^{j}(1) \ \Biggl| \ q \in \mathbb{N}, \omega_{j} \in \mathbb{N}_{0} \ (j \in \lbc0,q\rbc) \right\}.$$
\end{Remark}

\begin{proof}
Notice that $\mathscr{X}$ 
is the additive monoid generated by $\{\xi(n) \ | \ n \in \mathbb{N}\}$. 
So by Lemma \ref{fund_lem}-(1), we get the assertion. 
\end{proof}

\subsection{Normalization Procedure}\label{subsec:procedure}

For $\bc=c_{1}c_{2} \cdots$, let 
\begin{equation*}
\bc[i,j]:= \left\{ 
\begin{array}{ll}
c_{i}c_{i+1} \cdots c_{j} & \mbox{if $i \leq j<\infty$} \\
c_{i}c_{i+1} \cdots & \mbox{if $j=\infty$} \\
\varepsilon & \mbox{if $i>j$} \\
\end{array}
\right. 
\end{equation*}
where $\varepsilon$ is {\it{empty word}}, so that 
$$\varepsilon a = a\varepsilon = a$$
for all $a \in \mathbb{Z}$. 
By definition, we see that 
$$\bc[i,j]\bc[j+1,k] = \bc[i,k] \ \mbox{where $i \leq j <k$}.$$
Recall 
$$d_{\beta}^{*}(1) = d_{1}d_{2} \cdots.$$

\begin{Def}
Let $\bc=c_{1}c_{2} \cdots \in D_{\beta}$. 
We say that 
$\{k_{i}\}_{i \geq 0} \subset \mathbb{N}$ gives a free block decomposition 
of $\bc$ if $\{k_{i}\}_{i \geq 0}$ satisfies 
$k_{0}=0$, $k_{i}<k_{i+1}$ and 
\begin{equation*}
c_{k_{i}+j}^{} \left\{ 
\begin{array}{ll}
=d_{j} & \mbox{if $1 \leq j < k_{i+1}-k_{i}$} \\
<d_{j} & \mbox{if $j=k_{i+1}-k_{i}$.} \\
\end{array}
\right. 
\end{equation*}
That is, 
$$c_{1}c_{2} \cdots c_{n} \cdots 
= d_{\beta}^{*}(1)[1,k_{1}-1]c_{k_{1}}d_{\beta}^{*}(1)[1,k_{2}-k_{1}-1]
c_{k_{2}}\cdots.$$
\end{Def}

The notion of a free block decomposition is introduced in \cite{akiyama5}. 
Notice that for each $\bc \in D_{\beta}$, 
there always exists $\{k_{i}\}_{i \geq 0}$ 
which gives a free block decomposition of $\bc$ (note Theorem \ref{parry}). 

For $\bc,\bc' \in \mathbb{N}_{0}^{\mathbb{N}}$, 
define $\overset{\nu}{=}$ by 
$$\bc \overset{\nu}{=} \bc' 
\Longleftrightarrow \nu(\bc) = \nu(\bc')$$
and define the termwise subtraction $\bc \ominus \bc'$ by 
$$\bc \ominus \bc' 
= (c_{1}-c_{1}')(c_{2}-c_{2}')\cdots.$$
We slightly abuse the notation of $\ominus$ as follows: 
$$\bc[i,\infty] \ominus \bc'[j,\infty] 
= (c_{i}-c_{j}')(c_{i+1}-c_{j+1}') \cdots.$$
Clearly, $\nu(\bc \ominus \bc') = \nu(\bc)-\nu(\bc')$. 
Now, let 
$$\bc^{+}_{0}:= \bc[1,\ell-1](c_{\ell}+1)\bc[\ell+1,\infty]$$
such that $\nu(\bc_{0}^{+})<1$ 
where $\bc=c_{1}c_{2} \cdots \in D_{\beta}$ and $\ell \in \mathbb{N}$. 
Then $\bc_{0}^{+}$ does not necessarily belong to $D_{\beta}$. 
When $\bc^{+} \in D_{\beta}$ satisfies $\bc^{+} \overset{\nu}{=} \bc_{0}^{+}$, 
we call $\bc^{+}$ the {\it{normalization}} of $\bc_{0}^{+}$. 
In this subsection, we give a way to transform 
$\bc_{0}^{+}$ into its normalization $\bc^{+}$ 
based on the free block decomposition of $\bc$. 
The following {\it{carry formula}} is used 
in the process of normalization of $\bc_{0}^{+}$. 

\begin{Obs}\label{obs2}
For any $\ell \in \mathbb{N}$, 
$\bc \overset{\nu}{=} 
\bc[1,\ell-1](c_{\ell}+1) 
(\bc[\ell+1,\infty] \ominus d_{\beta}^{*}(1))$. 
\end{Obs}

\begin{proof}
Denote by $\bc'$ the right-hand side. Then 
\begin{equation*}
\begin{split}
\nu(\bc') 
&= \sum_{n=1}^{\ell-1}c_{n}\beta^{-n} + (c_{\ell}+1)\beta^{-\ell} 
+ \beta^{-\ell} (\nu(c_{\ell+1}c_{\ell+2}\cdots) - \nu(d_{\beta}^{*}(1)) ) \\
&= \sum_{n=1}^{\infty}c_{n}\beta^{-n} \\
&= \nu(\bc). \\
\end{split}
\end{equation*}
\end{proof}

The following remark explains how to use the carry formula. 

\begin{Remark}\label{idea}
Suppose that $\bc \in D_{\beta}$ and 
$\ell \in \mathbb{N}$ satisfy $\nu(\bc_{0}^{+})<1$ where 
$$\bc_{0}^{+} := \bc[1,\ell-1](c_{\ell}+1)\bc[\ell+1,\infty].$$
Let $\{k_{j}\}_{j \in \mathbb{N}_{0}}$ 
give the free block decomposition of $\bc$ and 
$k_{i} < \ell \leq k_{i+1}$. 
Then 
\begin{equation*}
\bc_{0}^{+} = \bc[1,k_{i}]d_{\beta}^{*}(1)[1,\ell-k_{i}-1] 
(c_{\ell}+1)\bc[\ell+1,\infty]. 
\end{equation*}
So by Observation \ref{obs2}, 
$$\bc_{0}^{+} \overset{\nu}{=} \bc_{0}^{carry}$$
where 
\begin{equation*}
\begin{array}{l}
\bc_{0}^{carry} := \bc[1,k_{i}-1] 
(c_{k_{i}}+1)0^{\ell-k_{i}-1}\theta \bc'_{1} \\
\theta:=c_{\ell}+1-d_{\ell-k_{i}} \\
\bc_{1}':=\bc[\ell+1,\infty] 
\ominus d_{\beta}^{*}(1)[\ell-k_{i}+1,\infty]. \\
\end{array}
\end{equation*}
Moreover, if $\theta + \nu(\bc_{1}') \in [0,1)$, then we have 
$$\bc_{0}^{+} \overset{\nu}{=} \bc_{1}^{+}$$
where 
$$\bc_{1}^{+} 
:= \bc[1,k_{i}-1] (c_{k_{i}}+1)0^{\ell-k_{i}} 
d_{\beta}(\theta + \nu(\bc'_{1})),$$
because 
$$\beta^{\ell-1}\nu(\bc_{0}^{carry}\ominus \bc_{1}^{+}) 
= \nu(\theta \bc_{1}') 
- \nu(0d_{\beta}(\theta+\nu(\bc_{1}'))) = 0.$$
\end{Remark}

\begin{Example}
Let $\beta>1$ be the algebraic integer with minimal polynomial 
$x^{3}-x^{2}-x-1$ (so-called {\it{tribonacci number}} ). 
Then $d_{\beta}(1) = 111$ and so $d_{\beta}^{*}(1)=(110)^{\infty}$. 
For instance, let 
$$\bc=c_{1}c_{2} \cdots := 10(110)^{2}10^{\infty} \ \mbox{and} \ \ell=9.$$
Then the free block decomposition of $\bc$ is 
$$\{k_{j}\}_{j \in \mathbb{N}} = \{2,10,11,12,13,\cdots\}.$$
By Observation \ref{obs2}, we have 
$$\bc_{0}^{+}:=10(110)^{2}20^{\infty} \overset{\nu}{=} 110^{6}1\bc_{1}' \ \mbox{where} \ \bc_{1}' := 0^{\infty} \ominus d_{\beta}^{*}(1)[8,\infty].$$
So, since $1+\nu(\bc_{1}') = 1-\xi(8) \in [0,1)$ by Lemma \ref{fund_lem}-(2), we get 
$$\bc_{0}^{+} \overset{\nu}{=} \bc_{1}^{+} := 110^{7} d_{\beta}(1+\nu(\bc_{1}')).$$
\end{Example}

Now, for repeated use of the carry formula, we prepare the following lemma. 

\begin{Lem}\label{lem1}
Let $\bc' \in D_{\beta}$ and 
$\bc=c_{1}c_{2} \cdots \in D_{\beta}$ with 
its free block decomposition $\{k_{j}\}_{j \in \mathbb{N}_{0}}$. 
Suppose $i \in \mathbb{N}_{0}$ and $\ell \in \mathbb{N}$ with $\ell >k_{i}$. 
Define 
\begin{equation*}
\tilde{\bc} := \left\{ 
\begin{array}{ll}
\bc[1,\ell-1](c_{\ell}+1)\bc' & \mbox{if $k_{i}< \ell < k_{i+1}$} \\
\bc[1,k_{i+1}-1](c_{k_{i+1}}+1)0^{\ell-k_{i+1}} 
\bc' & \mbox{if $\ell \geq k_{i+1}$} \\
\end{array}
\right. 
\end{equation*}
and 
$$\tilde{\bc}^{carry} 
:= \bc[1,k_{i}-1](c_{k_{i}}+1)0^{\ell-k_{i}-1} \theta 
(\bc' \ominus d_{\beta}^{*}(1) [\ell-k_{i}+1,\infty])$$
where the sequence $0^{0}$ means empty word and 
\begin{equation*}
\theta := \left\{ 
\begin{array}{ll}
1 & \mbox{if $k_{i} < \ell < k_{i+1}$} \\
0 & \mbox{if $\ell \geq k_{i+1}$.} \\
\end{array}
\right. 
\end{equation*}
If $\tilde{\bc} \notin D_{\beta}$, 
then 
\begin{equation}\label{eq:lem1}
\tilde{\bc} \overset{\nu}{=} 
\tilde{\bc}^{carry}
\end{equation}
and 
\begin{equation}\label{eq:lem2}
\theta + \nu(\bc')-\xi(\ell-k_{i}+1) \geq 0. 
\end{equation}
\end{Lem}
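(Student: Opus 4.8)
The plan is to prove the two claims \eqref{eq:lem1} and \eqref{eq:lem2} by treating the carry formula (Observation \ref{obs2}) as the engine and using the free block decomposition to control \emph{where} the carry lands. The statement splits into two regimes according to whether $\ell$ lies strictly inside the block $(k_i, k_{i+1})$ or has reached/passed the block boundary $k_{i+1}$, and I expect the same computation to cover both once the definition of $\theta$ is unfolded. So first I would recall that, by the free block decomposition, $\bc[k_i+1, k_{i+1}-1] = d_\beta^*(1)[1, k_{i+1}-k_i-1]$, so the digits of $\bc$ between position $k_i+1$ and $\ell-1$ agree with $d_1 d_2 \cdots$. This is exactly what lets me rewrite $\tilde{\bc}$ with the leading digit $c_{k_i}$ incremented.

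For \eqref{eq:lem1}, I would apply Observation \ref{obs2} at position $\ell' := k_i$ to the word $\tilde{\bc}$. The carry formula says that incrementing digit $c_{k_i}$ by one and subtracting $d_\beta^*(1)$ termwise from the tail starting at position $k_i+1$ preserves $\nu$. I then need to check that performing this subtraction on the explicit form of $\tilde{\bc}$ produces exactly $\tilde{\bc}^{carry}$. In the inner case $k_i < \ell < k_{i+1}$, positions $k_i+1$ through $\ell-1$ carry the digits $d_1, \ldots, d_{\ell-k_i-1}$, which cancel against $d_\beta^*(1)[1, \ell-k_i-1]$ to give $0^{\ell-k_i-1}$; position $\ell$ holds $c_\ell+1$ and loses $d_{\ell-k_i}$, yielding $\theta = c_\ell + 1 - d_{\ell-k_i}$, and the remaining tail $\bc'$ loses $d_\beta^*(1)[\ell-k_i+1,\infty]$. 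In the boundary case $\ell \geq k_{i+1}$, the intermediate run $0^{\ell-k_{i+1}}$ simply contributes $0$'s that are subtracted against the corresponding $d_j$'s, and a short bookkeeping check confirms the same shape with $\theta=0$. Matching these termwise against the displayed $\tilde{\bc}^{carry}$ is routine; the only subtlety is aligning the offsets in the definition of $\theta$ with the two cases, which is why the statement fixes $\theta$ piecewise.

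For \eqref{eq:lem2}, the inequality $\theta + \nu(\bc') - \xi(\ell-k_i+1) \geq 0$ is where the hypothesis $\tilde{\bc} \notin D_\beta$ finally enters, and this is the step I expect to be the main obstacle. The idea is that if $\tilde{\bc}$ failed to be admissible, then by Theorem \ref{parry} some shift $\sigma^n(\tilde{\bc})$ is \emph{not} lex-smaller than $d_\beta^*(1)$; I would argue that the first place admissibility can break is precisely at the incremented digit, so that $\sigma^{k_i}(\tilde{\bc}) \geq_{lex} d_\beta^*(1)$, i.e. $\theta\,\bc' \geq_{lex} d_{\ell-k_i}\, d_{\ell-k_i+1}\cdots$ after accounting for the canceled prefix. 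Translating this lex-inequality into an inequality of real values is exactly what Lemma \ref{fund_lem}-(3) delivers: from $\theta\bc' \geq_{lex} d_{\ell-k_i} d_{\ell-k_i+1}\cdots$ one gets $\theta + \nu(\bc') = \nu(\theta\bc') \geq \xi(\ell-k_i+1)$, since $\nu(\theta\bc')$ and $\theta+\nu(\bc')$ coincide (the single digit $\theta$ at the front contributes $\theta\beta^0$ against the tail $\bc'$ shifted one place). The delicate point is justifying that the violation of admissibility must occur at this specific shift and not elsewhere, which uses that $\bc$ and $\bc'$ are themselves in $D_\beta$ so all \emph{other} shifts remain admissible; I would isolate this as the crux and verify it by comparing $\tilde{\bc}$ digit-by-digit with $\bc$ up to position $k_i$, where they agree, and noting that only the incremented coordinate can push a window past the threshold $d_\beta^*(1)$.
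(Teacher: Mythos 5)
Your overall architecture (carry formula at position $k_i$, then Parry's criterion plus Lemma \ref{fund_lem} to convert non-admissibility into the value inequality) matches the paper's, but there is a genuine gap in where you let the hypothesis $\tilde{\bc}\notin D_{\beta}$ do its work. You treat \eqref{eq:lem1} as ``routine bookkeeping'' in both cases and reserve the hypothesis for \eqref{eq:lem2}; in fact the roles are reversed. In the inner case $k_i<\ell<k_{i+1}$, \eqref{eq:lem1} is indeed Observation \ref{obs2} plus the block structure (your computation is fine once you note $c_\ell=d_{\ell-k_i}$ there, so your $c_\ell+1-d_{\ell-k_i}$ equals the stated $\theta=1$), and \eqref{eq:lem2} is \emph{trivial}: $\theta=1$ and $\xi(\ell-k_i+1)\leq 1$ by Lemma \ref{fund_lem}-(1),(2), no hypothesis needed. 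But in the boundary case $\ell\geq k_{i+1}$, the termwise subtraction of $d_\beta^{*}(1)$ from $\tilde{\bc}[k_i+1,\infty]=\bc[k_i+1,k_{i+1}-1](c_{k_{i+1}}+1)0^{\ell-k_{i+1}}\bc'$ produces, in positions $k_{i+1},\dots,\ell$, the digits $(c_{k_{i+1}}+1-d_{k_{i+1}-k_i})(-d_{k_{i+1}-k_i+1})\cdots(-d_{\ell-k_i})$, not the string $0^{\ell-k_{i+1}+1}$ that $\tilde{\bc}^{carry}$ demands. Subtracting $d_j$ from a $0$ gives $-d_j$, which is not zero in general, and the two sides then genuinely differ in $\nu$-value. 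The identity \eqref{eq:lem1} in this case is \emph{equivalent} to the nontrivial fact $\tilde{\bc}[k_i+1,\ell]=d_\beta^{*}(1)[1,\ell-k_i]$, and the only way to get it is the route the paper takes: use $\tilde{\bc}\notin D_{\beta}$ to find $n$ with $\tilde{\bc}[n,\infty]\geq_{lex}d_\beta^{*}(1)$, show $k_i<n\leq k_{i+1}$ (shifts with $n\leq k_i$ are blocked by the strict inequality $c_{k_j}<d_{k_j-k_{j-1}}$ at the end of each earlier block, and shifts with $n>k_{i+1}$ are blocked by $\bc'\in D_\beta$), propagate back to $\tilde{\bc}[k_i+1,\infty]\geq_{lex}d_\beta^{*}(1)$ via Lemma \ref{fund_lem}-(2), and squeeze against the reverse inequality $\tilde{\bc}[k_i+1,\ell]\leq_{lex}d_\beta^{*}(1)[1,\ell-k_i]$ coming from the block decomposition. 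Without this step your proof of \eqref{eq:lem1} fails.

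Two smaller points. First, your passage from the lex-inequality to \eqref{eq:lem2} contains an error: $\nu(\theta\bc')=\beta^{-1}(\theta+\nu(\bc'))$, not $\theta+\nu(\bc')$, and Lemma \ref{fund_lem}-(3) applied to $\theta\bc'\geq_{lex}d_{\ell-k_i}d_{\ell-k_i+1}\cdots$ yields $\nu(\theta\bc')\geq\xi(\ell-k_i)$; this can be unwound to \eqref{eq:lem2}, but the cleaner route (and the paper's) is to extract $\bc'=\tilde{\bc}[\ell+1,\infty]\geq_{lex}d_\beta^{*}(1)[\ell-k_i+1,\infty]$ directly from the squeeze above and apply Lemma \ref{fund_lem}-(3) to get $\nu(\bc')\geq\xi(\ell-k_i+1)$ with $\theta=0$. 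Second, your localization claim that ``the violation must occur at the incremented digit'' is asserted rather than proved and is not literally what is needed; what the argument requires is that the violating shift starts strictly after $k_i$ and no later than $k_{i+1}$, which takes the block-by-block comparison sketched above.
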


The proof is a little complicated, so we defer it for the moment while we exemplify the validity of Lemma \ref{lem1}. 

\begin{Example}
Let $\beta>1$ be the algebraic integer with minimal polynomial 
$x^{3}-x-1$ (so-called {\it{minimal Pisot number}}). Then 
$d_{\beta}(1) = 10^{3}1$ and so $d_{\beta}^{*}(1)=(10^{4})^{\infty}$. 
\begin{description}
\item{(1)} \ Let 
$$\bc := 010^{5} 10^{\infty}, \ \ell=9 \ \mbox{and} \ \bc':=\bc[10,\infty]=0^{\infty}.$$
Then the free block decomposition of $\bc$ is 
$$\{k_{j}\}_{j \in \mathbb{N}} = \{1,7,13,14,\cdots\}.$$
So, since $\tilde{\bc}:=0 10^{5} 110^{\infty} \notin D_{\beta}$, we have by Observation \ref{obs2}, 
$$\tilde{\bc} \overset{\nu}{=} \tilde{\bc}^{carry} := 0 10^{4} 101(\bc' \ominus d_{\beta}^{*}(1)[3,\infty]).$$
Moreover by Lemma \ref{fund_lem}-(2), we have 
$$1+\nu(\bc')-\xi(3) =1-\xi(3) \geq 0.$$
\item{(2)} \ Let
$$\bc := 0 10^{5}10^{\infty}, \ \ell=7 \ \mbox{and} \ \bc':=\bc[8,\infty]=10^{\infty}.$$
Then the free block decomposition of $\bc$ is 
$$\{k_{j}\}_{j \in \mathbb{N}} = \{1,7,13,14,\cdots\}.$$
So, since $\tilde{\bc}:=0 10^{4} 110^{\infty} \notin D_{\beta}$, we have by Observation \ref{obs2}, 
$$\tilde{\bc} \overset{\nu}{=} \tilde{\bc}^{carry} := 10^{6} (\bc' \ominus d_{\beta}^{*}(1)[7,\infty]).$$
Moreover, since $d_{\beta}^{*}(1)[7,\infty] = 0^{4}d_{\beta}^{*}(1)$, we see that 
$$\nu(\bc')-\xi(7) = \beta^{-1}-\beta^{-4} >0.$$
\end{description}
\end{Example}

\begin{proof}[Proof of Lemma \ref{lem1}]
Suppose $k_{i}<\ell <  k_{i+1}$. 
By Remark \ref{idea}, \eqref{eq:lem1} holds. 
Moreover, since $\theta=1$ and $\xi(\ell-k_{i}+1) \leq 1$ by Lemma \ref{fund_lem}-(2), 
we have \eqref{eq:lem2}. 
So consider the case $\ell \geq k_{i+1}$. 
By the assumption, there is $n \in \mathbb{N}$ such that 
\begin{equation}\label{eq:lem3}
\tilde{\bc}[n,\infty] \geq_{lex} d_{\beta}^{*}(1). 
\end{equation}
Since $\tilde{\bc}[k_{i}+1,\infty]=\bc' \in D_{\beta}$, 
we have $n \leq k_{i+1}$. 
First, we show $k_{i}<n\leq k_{i+1}$. 
Assume that $k_{j-1} < n \leq k_{j}$ and $j \leq i$. Then 
by definition of $k_{m}$'s, 
$$\tilde{\bc}[n,k_{j}-1] =d_{\beta}^{*}(1)[n-k_{j-1},k_{j}-k_{j-1}-1] 
\ \mbox{and} \ c_{k_{j}}<d_{k_{j}-k_{j-1}}.$$
So by Lemma \ref{fund_lem}-(2), 
$$\tilde{\bc}[n,\infty] <_{lex}d_{\beta}^{*}(1)[n-k_{j-1},k_{j}-k_{j-1}-1]d_{k_{j}-k_{j-1}}0^{\infty} 
\leq_{lex} d_{\beta}^{*}(1)[n-k_{j-1},\infty] \leq_{lex} d_{\beta}^{*}(1).$$
This contradicts \eqref{eq:lem3} and we get 
$k_{i}<n \leq k_{i+1}$. 
So, since 
$$\tilde{\bc}[k_{i}+1,n-1]\tilde{\bc}[n,\infty] 
\geq_{lex} d_{\beta}^{*}(1)[1,n-k_{i}-1]d_{\beta}^{*}(1),$$
we have by Lemma \ref{fund_lem}-(2), 
\begin{equation}\label{eq:lem4}
\tilde{\bc}[k_{i}+1,\infty] \geq_{lex} d_{\beta}^{*}(1). 
\end{equation}
Note that by definition of $k_{m}$'s, 
\begin{equation*}
\tilde{\bc}[k_{i}+1,\ell] = \bc[k_{i}+1,k_{i+1}-1](c_{k_{i+1}}+1)0^{\ell-k_{i+1}} 
\leq_{lex} d_{\beta}^{*}(1)[1,\ell-k_{i}]. 
\end{equation*}
Therefore by \eqref{eq:lem4}, 
\begin{equation}\label{eq:lem5}
\tilde{\bc}[k_{i}+1,\ell] = d_{\beta}^{*}(1)[1,\ell-k_{i}] 
\end{equation}
So by Observation \ref{obs2}, \eqref{eq:lem1} holds. 
Moreover, since 
$$\tilde{\bc}[\ell+1,\infty] \geq_{lex} d_{\beta}^{*}(1)[\ell-k_{i}+1,\infty]$$
by \eqref{eq:lem4} and \eqref{eq:lem5}, we have by Lemma \ref{fund_lem}-(3), 
$$\nu(\bc')-\xi(\ell-k_{i}+1) 
= \nu(\tilde{\bc}[\ell+1,\infty]) 
- \nu(d_{\beta}^{*}(1)[\ell-k_{i}+1,\infty]) \geq 0.$$
Hence \eqref{eq:lem2} holds. 
\end{proof}

\subsection{Proof of Theorem \ref{key}}

\begin{proof}[Proof of Theorem \ref{key}]
Let $x \in [0,\infty)$ and $\ell:=L(x+1)$ be given. 
Note $\ell \geq L(x)$. 
Now we use {\it{the carry look-ahead}}, 
that is, put
$$\bc=c_{1}c_{2} \cdots:= 0^{\ell-L(x)}d_{\beta}(\beta^{-L(x)}x).$$
Then 
$$d_{\beta}(\beta^{-L(x)}x) = \bc[\ell-L(x)+1,\infty]$$
and so 
\begin{equation*}
\begin{split}
x &= c_{\ell-L(x)+1}\beta^{L(x)-1} 
+ \cdots + c_{\ell-1}\beta + c_{\ell} + c_{\ell+1}\beta^{-1}+\cdots \\
&= c_{1}\beta^{\ell-1} + \cdots + c_{\ell-1}\beta 
+ c_{\ell} + c_{\ell+1}\beta^{-1}+\cdots. \\
\end{split}
\end{equation*}
Define 
$$\bc^{+}_{0}:=\bc[1,\ell-1](c_{\ell}+1)\bc[\ell+1,\infty].$$
Notice that 
$$\beta^{-\ell}(x+1) 
=c_{1}\beta^{-1}+\cdots+c_{\ell-1}\beta^{-\ell+1}+(c_{\ell}+1)\beta^{-\ell}+ 
c_{\ell+1}\beta^{-\ell-1}+\cdots.$$
Thus 
\begin{equation}\label{eq:key-i-0}
\bc_{0}^{+} \overset{\nu}{=} d_{\beta}(\beta^{-\ell}(x+1)).  
\end{equation}
Now we prepare some notations for the normalization of $\bc_{0}^{+}$ based on the free block decomposition $\{k_{j}\}_{j \in \mathbb{N}_{0}}$ of $\bc$. Pick $i$ with $k_{i}<\ell \leq k_{i+1}$. Define 
\begin{equation*}
\theta = \left\{ 
\begin{array}{ll}
1 & \mbox{if $\ell < k_{i+1}$} \\
0 & \mbox{if $\ell=k_{i+1}$.} \\
\end{array}
\right. 
\end{equation*}
Let 
$$\gamma:= \theta+\{x\}_{\beta}-\xi(\ell-k_{i}+1).$$

$Claim$. $\gamma < 1$. 

$(Proof \ of \ Claim)$ It suffices to show the case $\theta=1$. Since $\bc[k_{i}+1,\infty] <_{lex} d_{\beta}^{*}(1)$ 
and $\bc[k_{i}+1,\ell] = d_{\beta}^{*}(1)[1,\ell-k_{i}]$ by definition of $k_{m}$'s, 
we have 
$$\bc[\ell+1,\infty] <_{lex} d_{\beta}^{*}(1)[\ell-k_{i}+1,\infty]$$
and so by Lemma \ref{fund_rem}-(4), 
$$\gamma-1 = \{x\}_{\beta}-\xi(\ell-k_{i}+1) 
= \nu(\bc[\ell+1,\infty])-\nu(d_{\beta}^{*}(1)[\ell-k_{i}+1,\infty]) 
< 0.$$

$(Proof \ Continued)$ 
Let 
$$y_{0}:=\{x\}_{\beta}$$
and in case $\gamma \geq 0$, define $\{y_{n}\}_{1 \leq n \leq i}$ by 
\begin{equation*}
y_{1}:= \gamma \ \mbox{and} \ 
y_{n+1}:=y_{n}-\xi(\ell-k_{i-n}+1) 
\ \mbox{for $n \in \lbc1,i-1\rbc$.} \\
\end{equation*}
Notice that $y_{n}<1$ for all $n \in \lbc0,i\rbc$. 
If $y_{n} \geq 0$, then define 
\begin{equation*}
\bc^{+}_{n}=\bc[1,k_{i-n+1}-1](c_{k_{i-n+1}}+1)0^{\ell-k_{i-n+1}}d_{\beta}(y_{n}). 
\end{equation*}
Recall 
$$\mathscr{X} = 
\left\{ \sum_{m=1}^{p}u_{m}\xi(m) \ \Biggl| \ p \in \mathbb{N}, \ u_{m} \in \mathbb{N}_{0} \ (m \in \lbc1,p\rbc)\right\}.$$
We prove the assertion through $3$ steps as follows: 
\begin{description}
\item{(i)} \ For each $n$, 
the following statement (S$_{n}$) holds: 
\begin{description}
\item{(S$_{n})$} \ If $y_{j} \geq 0$ and 
$\bc_{j}^{+} \notin D_{\beta}$ 
for all $j \in \lbc0,n-1\rbc$, then $y_{n} \geq 0$ and 
$\bc_{n}^{+} \overset{\nu}{=} d_{\beta}(\beta^{-\ell}(x+1))$. 
\end{description}
\item{(ii)} \ There exists $n \in \lbc0,i\rbc$ such that 
$y_{n} \geq 0$ and $\bc_{n}^{+}=d_{\beta}(\beta^{-\ell}(x+1))$. 
\item{(iii)} \ There exists $X \in \mathscr{X}$ such that 
$\{x+1\}_{\beta}-\{x\}_{\beta} =\theta-X$. 
\end{description}

(i) By \eqref{eq:key-i-0}, (S$_{0}$) holds. 
Let $n<i$. Suppose that (S$_{n}$) holds 
and that $y_{j} \geq 0$ and $\bc_{j}^{+} \notin D_{\beta}$ 
for all $j \in \lbc0,n\rbc$. 
By applying Lemma \ref{lem1} with $\tilde{\bc}=\bc_{n}^{+}$ and $\bc'=d_{\beta}(y_{n})$, we have 
$$y_{n+1} = y_{n}-\xi(\ell-k_{i-n}+1) \geq 0$$
and 
\begin{equation*}
\begin{split}
\bc_{n}^{+} &\overset{\nu}{=} \bc[1,k_{i-n}-1](c_{k_{i-n}}+1)0^{\ell-k_{i-n}} (d_{\beta}(y_{n}) \ominus d_{\beta}^{*}(1)[\ell-k_{i-n}+1,\infty]) \\
&\overset{\nu}{=} \bc[1,k_{i-n}-1](c_{k_{i-n}}+1)0^{\ell-k_{i-n}} d_{\beta}(y_{n}-\xi(\ell-k_{i-n}+1)) \\
&\overset{\nu}{=} \bc[1,k_{i-n}-1](c_{k_{i-n}}+1)0^{\ell-k_{i-n}} d_{\beta}(y_{n+1}) \\
&= \bc_{n+1}^{+}. \\
\end{split}
\end{equation*}
Therefore (S$_{n+1}$) holds. Hence by induction on $n$, we have the desired result. 

(ii) Assume that $d_{\beta}(\beta^{-\ell}(x+1)) \neq \bc_{n}^{+}$ 
for all $n \in \lbc0,i\rbc$. 
Then by (i), $y_{j} \geq 0$ 
and $\bc_{j}^{+} \overset{\nu}{=} d_{\beta}(\beta^{-\ell}(x+1))$ 
for all $j$. So 
$$\nu(\bc_{i}^{+}) = \beta^{-\ell}(x+1) < 1.$$
On the other hand, 
since $\bc_{i}^{+} \notin D_{\beta}$, 
there exists $m \in \mathbb{N}$ such that 
$$\bc_{i}^{+}[m,\infty] \geq_{lex} d_{\beta}^{*}(1).$$
Note $m \in \lbc1,k_{1}\rbc$ because $\bc_{i}^{+}[k_{1}+1,\infty]= 0^{\ell-k_{1}}d_{\beta}(y_{i}) \in D_{\beta}$. By Lemma \ref{fund_lem}-(2), 
\begin{equation*}
\begin{split}
\bc_{i}^{+} = d_{\beta}^{*}(1)[1,m-1]\bc_{i}^{+}[m,\infty] 
\geq_{lex} d_{\beta}^{*}(1)[1,m-1]d_{\beta}^{*}(1) 
\geq_{lex} d_{\beta}^{*}(1). 
\end{split}
\end{equation*}
So by Lemma \ref{fund_lem}-(3), 
$$\nu(\bc^{+}_{i}) \geq \nu(d_{\beta}^{*}(1))=1.$$
Hence we have a contradiction. 

(iii) Suppose that $\bc_{n}^{+}=d_{\beta}(\beta^{-\ell}(x+1))$. 
Define 
\begin{equation*}
X= \left\{ 
\begin{array}{ll}
0 & \mbox{if $n=0$} \\
\sum_{j=0}^{n-1} \xi(\ell-k_{i-j}+1) & \mbox{if $n \geq 1$.} \\
\end{array}
\right. 
\end{equation*}
If $n=0$, then $\ell=k_{i+1}$ and so $\theta=0$. Therefore in this case, $\{x+1\}_{\beta}-\{x\}_{\beta}=0=\theta-X$. Consider the case $n \geq 1$. 
By definition of $\{y_{n}\}_{n \geq 1}$, 
\begin{equation*}
y_{n} = y_{n-1}-\xi(\ell-k_{i-n+1}+1) 
= \cdots = \gamma -\sum_{j=1}^{n-1} \xi(\ell-k_{i-j}+1) 
= \theta+\{x\}_{\beta}-X. 
\end{equation*}
So, since $d_{\beta}(\{x+1\}_{\beta}) = d_{\beta}(\beta^{-\ell}(x+1))[\ell+1,\infty]$ by \eqref{eq:key-i-0}, we have 
\begin{equation*}
\{x+1\}_{\beta}= y_{n} = \theta+\{x\}_{\beta}-X. 
\end{equation*}
Thus (iii) holds. 

Hence by (iii) and Remark \ref{fund_rem}, 
the proof is completed. 
\end{proof}

As a corollary of Theorem \ref{key}, we have the following. 

\begin{Cor}\label{cor1}
$$\{ \{N\}_{\beta} \ | \ N \in \mathbb{N}_{0} \} 
\subset \left\{ \left\{ - \sum_{n=1}^{q} \omega_{n}T^{n}(1) \right\} \ \Biggl| \ q \in \mathbb{N}, \ \omega_{n} \in \mathbb{N}_{0} \ (n \in \lbc1,q\rbc) \right\}.$$
\end{Cor}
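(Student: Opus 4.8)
The plan is to argue by induction on $N \in \mathbb{N}_{0}$, with the inductive step furnished entirely by Theorem \ref{key}. Throughout, the outer braces $\{\cdot\}$ denote the ordinary fractional part, and I will use repeatedly the elementary fact that $\{y+m\}=\{y\}$ for every $m \in \mathbb{Z}$. The base case $N=0$ is immediate: since $L(0)=0$ and $d_{\beta}(0)=0^{\infty}$, we have $\{0\}_{\beta}=0$, which is $\{-\omega_{1}T(1)\}$ with $q=1$ and $\omega_{1}=0$.

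For the inductive step, assume $\{N\}_{\beta} = \{-\sum_{n=1}^{q}\omega_{n}T^{n}(1)\}$ for some $q \in \mathbb{N}$ and $\omega_{n} \in \mathbb{N}_{0}$. Applying Theorem \ref{key} with $x=N$ yields $\theta \in \{0,1\}$ together with a quantity $X:=\sum_{j=0}^{q'}\omega_{j}'T^{j}(1) \in \mathscr{X}$ (here I invoke Remark \ref{fund_rem} to record the output of Theorem \ref{key} in this monoid form) such that $\{N+1\}_{\beta}-\{N\}_{\beta}=\theta-X$. Hence, as real numbers, $\{N+1\}_{\beta}=\{N\}_{\beta}+\theta-X$. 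Because every $\beta$-fractional part equals $\nu$ of a tail of a genuine $\beta$-expansion and therefore lies in $[0,1)$, the left-hand side is already in $[0,1)$, so I may apply the fractional part to the right-hand side without altering the value:
\begin{equation*}
\{N+1\}_{\beta} = \{\, \{N\}_{\beta} + \theta - X \,\}.
\end{equation*}

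It then remains to substitute the induction hypothesis and absorb the integer contributions. Writing $\{N\}_{\beta} = -\sum_{n=1}^{q}\omega_{n}T^{n}(1)+k$ with $k \in \mathbb{Z}$, and noting that $\theta,k \in \mathbb{Z}$ and that the $j=0$ summand of $X$ equals $\omega_{0}'\,T^{0}(1)=\omega_{0}' \in \mathbb{Z}$, all of these integers vanish under the outer fractional part. This gives
\begin{equation*}
\{N+1\}_{\beta} = \left\{ -\sum_{n=1}^{Q}(\omega_{n}+\omega_{n}')\,T^{n}(1) \right\}, \qquad Q:=\max(q,q'),
\end{equation*}
where the two coefficient sequences are zero-padded up to $Q$, so that each $\omega_{n}+\omega_{n}' \in \mathbb{N}_{0}$ and $Q \in \mathbb{N}$. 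This is exactly a member of the target set, completing the induction and hence the corollary.

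The whole argument is essentially routine bookkeeping once Theorem \ref{key} and Remark \ref{fund_rem} are in hand; the single point requiring attention is the role of the term $T^{0}(1)=1$. The target set sums only over $n \geq 1$, whereas $\mathscr{X}$ --- and thus the quantity $X$ produced by Theorem \ref{key} --- may carry a $T^{0}(1)$ contribution. The resolution, which I expect to be the only mildly delicate step, is to observe that this contribution is an integer and therefore invisible to the fractional part, so that restricting the summation to $n \geq 1$ costs nothing.
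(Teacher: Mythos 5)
Your proposal is correct and follows essentially the same route as the paper: induction on $N$ with the inductive step supplied by Theorem \ref{key}, zero-padding the two coefficient sequences to a common length, and absorbing the integer contributions $\theta$ and $\omega_{0}'T^{0}(1)=\omega_{0}'$ into the fractional part. The paper phrases the induction hypothesis as a congruence $\{N\}_{\beta} \equiv -\sum_{n=1}^{q}\omega_{n}T^{n}(1) \pmod{\mathbb{Z}}$ rather than an equality of fractional parts, but since $\{N\}_{\beta}\in[0,1)$ these are the same statement.
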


\begin{proof}
It suffices to show that for $N \in \mathbb{N}$, 
there exist $q \in \mathbb{N}$ and $\omega_{n} \in \mathbb{N}_{0}$ 
$(n \in \lbc1,q\rbc)$ such that 
\begin{equation}\label{eq:cor1}
\{N\}_{\beta} \equiv -\sum_{n=1}^{q}\omega_{n}T^{n}(1) \mod{\mathbb{Z}}
\end{equation}
by induction on $N$. Suppose that (\ref{eq:cor1}) holds for $N$. 
By Theorem \ref{key}, there exist $\theta \in \{0,1\}$, 
$q' \in \mathbb{N}$ 
and $\omega_{n}' \in \mathbb{N}_{0}$ $(n \in \lbc0,q'\rbc)$ such that 
$$\{N+1\}_{\beta}-\{N\}_{\beta} 
= \theta-\sum_{n=0}^{q'}\omega_{n}'T^{n}(1).$$
Let $q_{0}:=\max\{q,q'\}$. Define 
$\omega_{n}:=0$ for $n \in \lbc q+1,q_{0}\rbc$ and 
$\omega_{n}':=0$ for $n \in \lbc q'+1,q_{0}\rbc$. 
Then we have 
\begin{equation*}
\{N+1\}_{\beta} 
= \theta-\sum_{n=0}^{q_{0}}\omega_{n}'T^{n}(1) + \{N\}_{\beta} 
\equiv -\sum_{n=1}^{q_{0}}(\omega_{n}'+\omega_{n})T^{n}(1) \mod{\mathbb{Z}}. 
\end{equation*}
Hence we have the desired result. 
\end{proof}

Notice that if $\{N\}_{\beta}$ has the finite beta-expansion for each $N\in\mathbb{N}$, then $\beta$ has property (F$_{1}$). 
So by Corollary \ref{cor1}, we see that if $\{ - \sum_{n=1}^{q} \omega_{n}T^{n}(1) \} \in \mbox{Fin}(\beta)$ for all $q \in \mathbb{N}$ and $\omega_{n} \in \mathbb{N}_{0}$ $(n \in \lbc1,q\rbc)$, then $\beta$ has property (F$_{1}$). 

\section{Sufficient Condition for Property (F$_{1}$)}\label{sec:main2}

In this section, let $\beta>1$ be an algebraic integer with minimal polynomial 
$$x^{d}-a_{d-1}x^{d-1}-\cdots -a_{1}x-a_{0}$$
and 
$$\br = (r_{1},r_{2},\cdots,r_{d-1}) := \left(a_{0}\beta^{-1}, \cdots,\sum_{i=1}^{j}a_{j-i}\beta^{-i}, \cdots, \sum_{i=1}^{d-1}a_{d-1-i}\beta^{-i} \right).$$

\subsection{Shift radix system}

For $\wl=(l_{1},l_{2},\cdots,l_{d-1}) \in \mathbb{Z}^{d-1}$, let  
$$\lambda(\wl)= \br \cdot \wl$$
where $\br \cdot \wl$ is the inner product of the vector $\br$ and $\wl$. 
Define $\tau:\mathbb{Z}^{d-1} \to \mathbb{Z}^{d-1}$ by 
$$\tau(\wl) =  (l_{2},\cdots,l_{d-1},-\lfloor \lambda(\wl) \rfloor).$$
In \cite{akiyama6}, $\tau$ is 
called {\it{shift radix system}} (SRS for short). 
The definition of SRS is different from previous articles (indeed, the SRS in \cite{akiyama3,akiyama4} coincide with SRS satisfying the property that for each $\wl \in \mathbb{Z}^{d-1}$, there is $k \in \mathbb{N}$ such that $\tau^{k}(\wl)=\bzero$). 
In this paper, the definition of SRS conforms to \cite{akiyama6}.  
Let 
$$\{\lambda\}(\wl) = \{\lambda(\wl)\}.$$
Then $\{\lambda\}$ is bijective and we have the following commutative diagram 
\begin{equation*}
\begin{array}{ccc}
\mathbb{Z}^{d-1} & \overset{\tau}{\longrightarrow} & \mathbb{Z}^{d-1} \\
\{\lambda\} \Big\downarrow & & \Big\downarrow \{\lambda\} \\
\mathbb{Z}[\beta] \cap [0,1) & \underset{T}{\longrightarrow} & \mathbb{Z}[\beta]\cap [0,1) \\
\end{array}
\end{equation*}
(for example, see Proposition 2.4 in \cite{survey}). For the completeness' sake, we prove this commutative diagram. Notice that 
\begin{equation}\label{eq:prop1}
\beta-a_{d-1} 
= a_{d-2}\beta^{-1}+\cdots+a_{1}\beta^{-d+2}+a_{0}\beta^{-d+1} 
= \lambda(0,\cdots,0,1). 
\end{equation}
Let $\ba=(a_{0},a_{1},\cdots,a_{d-2})$. Then for $\wl=(l_{1},l_{2},\cdots,l_{d-1})$, 
\begin{equation*}
\beta \lambda(\wl) = \ba \cdot \wl + r_{1}l_{2} + \cdots + r_{d-2}l_{d-1} = \ba \cdot \wl + \lambda(\tau(\wl)) + \lfloor \lambda(\wl) \rfloor(\beta-a_{d-1}) 
\end{equation*}
Therefore $\beta \{\lambda\} (\wl) = \ba \cdot \wl + \lambda(\tau(\wl)) - \lfloor \lambda(\wl)\rfloor a_{d-1}$ and so $\beta \{\lambda\}(\wl) \equiv \lambda(\tau(\wl)) \mod{\mathbb{Z}}$. 
Hence 
$$T(\{\lambda\}(\wl)) = \{\lambda\}(\tau(\wl)).$$

Define 
$$F_{\beta} := \{ \wl \in \mathbb{Z}^{d-1} \ 
| \ \exists k\geq 0; \ \tau^{k}(\wl)=\bzero\}.$$
Since $\{\lambda\}(\bzero) = 0$, 
we have 
\begin{equation}\label{fin_beta}
\{\lambda\}(F_{\beta}) \subset \mbox{Fin}(\beta) 
\end{equation}
by commutative diagram. Let 
$$\wl_{I}:=(0,\cdots,0,1) \in \mathbb{Z}^{d-1}.$$
Then we have the following. 

\begin{Remark}\label{T1_exp}
$T^{n}(1) 
= \{\lambda\}(\tau^{n-1}(\wl_{I}))$ 
for all $n \in \mathbb{N}$. 
\end{Remark}

\begin{proof}
By \eqref{eq:prop1}, 
$$T(1) = \beta-\lfloor \beta \rfloor 
= \beta-a_{d-1}-\lfloor \beta-a_{d-1}\rfloor 
= \{\lambda\}(\wl_{I}).$$
Hence by commutative diagram, we have the desired result. 
\end{proof}

Now, let 
$$V_{\beta}:=\left\{ -\sum_{n=1}^{r}\omega_{n}\tau^{n-1}(\wl_{I}) \ \Biggl| \ r \in \mathbb{N}, \ \omega_{n} \in \mathbb{N}_{0} \ (n \in \lbc1,r\rbc) \right\}.$$

\begin{Prop}\label{prop}
If $V_{\beta} \subset F_{\beta}$, then $\beta$ has property (F$_{1}$). 
\end{Prop}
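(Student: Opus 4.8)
The plan is to reduce property (F$_1$) to the hypothesis $V_{\beta} \subset F_{\beta}$ by invoking the criterion recorded just after Corollary \ref{cor1}: it suffices to show that $\{-\sum_{n=1}^{q} \omega_{n}T^{n}(1)\} \in \mbox{Fin}(\beta)$ for every $q \in \mathbb{N}$ and every choice of $\omega_{1},\dots,\omega_{q} \in \mathbb{N}_0$. So I would fix such a $q$ and such weights, and then try to exhibit the quantity $\{-\sum_{n=1}^{q} \omega_{n}T^{n}(1)\}$ as $\{\lambda\}$ evaluated at a point of $F_{\beta}$.

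The natural candidate is the lattice vector
$$\bv := -\sum_{n=1}^{q} \omega_{n}\tau^{n-1}(\wl_{I}) \in \mathbb{Z}^{d-1},$$
which lies in $V_{\beta}$ by definition, hence in $F_{\beta}$ by assumption. First I would compute $\lambda(\bv)$ using that $\lambda(\cdot) = \br \cdot (\cdot)$ is linear, obtaining $\lambda(\bv) = -\sum_{n=1}^{q} \omega_{n}\lambda(\tau^{n-1}(\wl_{I}))$. Next I would bring in Remark \ref{T1_exp}, which gives $T^{n}(1) = \{\lambda\}(\tau^{n-1}(\wl_{I}))$, that is $\lambda(\tau^{n-1}(\wl_{I})) \equiv T^{n}(1) \pmod{\mathbb{Z}}$ for each $n$. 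Combining these congruences with the integer weights $-\omega_{n}$ yields $\lambda(\bv) \equiv -\sum_{n=1}^{q} \omega_{n}T^{n}(1) \pmod{\mathbb{Z}}$, and taking fractional parts delivers the key identity
$$\{\lambda\}(\bv) = \left\{-\sum_{n=1}^{q} \omega_{n}T^{n}(1)\right\}.$$

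To finish, I would use $\bv \in F_{\beta}$ together with the inclusion \eqref{fin_beta}, namely $\{\lambda\}(F_{\beta}) \subset \mbox{Fin}(\beta)$, to conclude that the left-hand side — and hence the desired fractional part — belongs to $\mbox{Fin}(\beta)$. Since $q$ and the $\omega_{n}$ were arbitrary, the criterion following Corollary \ref{cor1} then gives property (F$_1$).

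I do not anticipate a serious obstacle, as every ingredient is already in place: the argument is essentially a bookkeeping assembly of the linearity of $\lambda$, Remark \ref{T1_exp}, the inclusion \eqref{fin_beta}, and the reduction after Corollary \ref{cor1}. The one point requiring care is the passage from the equality $\{\lambda\}(\tau^{n-1}(\wl_{I})) = T^{n}(1)$ to the congruence $\lambda(\tau^{n-1}(\wl_{I})) \equiv T^{n}(1) \pmod{\mathbb{Z}}$ and its compatibility with integer-weighted summation; this is exactly where one must remember that $\{\lambda\}$ denotes a fractional part and that $T^{n}(1) \in [0,1)$, so that the final equality of fractional parts (both representatives lying in $[0,1)$) follows without any rounding ambiguity.
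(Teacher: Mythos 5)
Your proposal is correct and follows essentially the same route as the paper's own proof: reduce to the criterion after Corollary \ref{cor1}, use linearity of $\lambda$ together with Remark \ref{T1_exp} to identify $\{\lambda\}\bigl(-\sum_{n}\omega_{n}\tau^{n-1}(\wl_{I})\bigr)$ with $\bigl\{-\sum_{n}\omega_{n}T^{n}(1)\bigr\}$, and conclude via the inclusion \eqref{fin_beta}. No gaps.
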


\begin{proof}
By Corollary \ref{cor1}, it suffices to show 
$\left\{ -\sum_{n=1}^{r} \omega_{n}T^{n}(1) \right\} \in \mbox{Fin}(\beta)$ for all $r \in \mathbb{N}$ and $\omega_{n} \in \mathbb{N}_{0}$ $(n \in \lbc1,r\rbc)$. 
Notice that by linearity of $\lambda$, 
\begin{equation*}
\begin{split}
\{\lambda\} ( \zeta_{1} \wl  + \zeta_{2} \wk ) 
&\equiv \lambda(\zeta_{1} \wl + \zeta_{2} \wk) \mod{\mathbb{Z}} \\
&= \zeta_{1}\lambda(\wl) + \zeta_{2} \lambda(\wk) \\
&\equiv \zeta_{1} \{\lambda\}(\wl) + \zeta_{2} \{\lambda\}(\wk) \mod{\mathbb{Z}} \\
\end{split}
\end{equation*}
for all $\zeta_{1},\zeta_{2} \in \mathbb{Z}$ and $\wl,\wk \in \mathbb{Z}^{d-1}$. So by Remark \ref{T1_exp}, 
\begin{equation*}
\begin{split}
\{\lambda\} 
\left( -\sum_{n=1}^{r}\omega_{n}\tau^{n-1}(\wl_{I}) \right) 
&\equiv -\sum_{n=1}^{r}\omega_{n}\{\lambda\}(\tau^{n-1}(\wl_{I})) \mod{\mathbb{Z}} \\
&= -\sum_{n=1}^{r} \omega_{n}T^{n}(1) \\
\end{split}
\end{equation*}
Therefore, since the left-hand side belongs to $[0,1)$, we have 
$$\{\lambda\} \left( -\sum_{n=1}^{r}\omega_{n}\tau^{n-1}(\wl_{I}) \right) = \left\{ -\sum_{n=1}^{r} \omega_{n}T^{n}(1) \right\}.$$
Hence by \eqref{fin_beta}, we get the assertion. 
\end{proof}

Let 
\begin{equation*}
\begin{split}
&Q_{\beta} = \left\{ \wl \in \mathbb{Z}^{d-1} \ \left| \ 
\begin{split}
&\exists N \in \mathbb{N}, \ \exists \{\wl_{n}\}_{n=1}^{N} \ \mbox{s.t.} \ 
\wl_{1}=\wl_{I}, \ \wl_{N}=\wl \\
&\mbox{and} \ \wl_{n+1} \in \{\tau(\wl_{n}), 
\tau^{*}(\wl_{n})\} \ (\forall n) \\
\end{split}
\right. 
\right\} \\
&P_{\beta}=\{\wl \in Q_{\beta}\setminus\{\bzero\} \ | \ 
\exists N \in \mathbb{N}; \ \tau^{N}(\wl)=\wl\}. \\
\end{split}
\end{equation*}
where 
$$\tau^{*}(\wl) = -\tau(-\wl).$$
It is known that if $\beta$ is a Pisot number, 
then $\tau$ and $\tau^{*}$ is contractive 
(see \cite{akiyama3,survey}). 
In particular, $Q_{\beta}$ is finite (cf. \cite{mine1}). 

The purpose in this section is to prove the following theorem. 

\begin{Thm}\label{main2}
Suppose that a Pisot number $\beta$ satisfies the followings: 
\begin{description}
\item{(i)} \ $\tau^{-1}(P_{\beta}) \subset P_{\beta}$. 
\item{(ii)} \ $\lbc-\delta,\delta \mbox{\textnormal{$\rbc$}}^{d-1} 
\cap V_{\beta} \subset F_{\beta}$ where 
$$\delta:=\max\{ |l_{j}| \ | \ (l_{1},\cdots,l_{d-1}) \in P_{\beta}, j\in \lbc1,d-1\mbox{\textnormal{$\rbc$}}\}.$$
\end{description}
Then $\beta$ has property (F$_{1}$). 
\end{Thm}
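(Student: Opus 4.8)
The plan is to invoke Proposition \ref{prop}, which reduces the theorem to the single inclusion $V_{\beta} \subset F_{\beta}$. Throughout I would use that $\beta$ is Pisot, so that $\tau$ (and $\tau^{*}$) is contractive and $Q_{\beta}$ is finite; consequently every orbit $\{\tau^{k}(\bv)\}_{k \geq 0}$ eventually stays in a fixed finite region and is therefore eventually periodic, while $\bzero$ is a fixed point whose only cycle is $\{\bzero\}$. Fix $\bv \in V_{\beta}$ and let $C$ be the cycle its $\tau$-orbit eventually enters. If $C = \{\bzero\}$ then $\bv \in F_{\beta}$ and there is nothing to prove, so I argue by contradiction, assuming $C \neq \{\bzero\}$; then $C$ consists of nonzero periodic points, none of which lies in $F_{\beta}$, since $\bzero$ is fixed and any cycle meeting $\bzero$ equals $\{\bzero\}$.

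The decisive step is to show $C \subset P_{\beta}$. By its definition $Q_{\beta}$ is closed under both $\tau$ and $\tau^{*}$, hence forward invariant under $\tau$; so it suffices to show that the orbit of $\bv$ eventually meets $Q_{\beta}$, for then the whole cycle $C$ lies in $Q_{\beta}$ and, being nonzero and periodic, satisfies $C \subset P_{\beta}$. To reach $Q_{\beta}$ I would track the action of $\tau$ on a typical $\bv = -\sum_{n=1}^{r}\omega_{n}\tau^{n-1}(\wl_{I})$: writing $\tau$ as the coordinate shift followed by the rounding of $-\lambda$, and using Remark \ref{T1_exp} together with $T^{n}(1)=\{\lambda\}(\tau^{n-1}(\wl_{I}))$, a direct computation gives that $\tau(\bv)$ equals $-\sum_{n=2}^{r+1}\omega_{n-1}\tau^{n-1}(\wl_{I})$ corrected by the nonnegative integer carry $\lceil \sum_{n}\omega_{n}T^{n}(1)\rceil$ in the last coordinate. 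Because $\tau$ is contractive these carries are eventually bounded and the orbit enters a fixed finite region; the sign changes that the carries induce are precisely what the auxiliary map $\tau^{*}(\wl)=-\tau(-\wl)$ records, and I would use this correspondence to exhibit a $\{\tau,\tau^{*}\}$-path from $\wl_{I}$ to a point of the orbit, placing it in $Q_{\beta}$. I expect this reachability step — reconciling the $\tau$-only dynamics of $V_{\beta}$ with the $\{\tau,\tau^{*}\}$-reachability that defines $Q_{\beta}$ — to be the main obstacle.

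Granting $C \subset P_{\beta}$, the rest is short and is where both hypotheses are spent. Let $k_{0}$ satisfy $\tau^{k_{0}}(\bv) \in C \subset P_{\beta}$. Hypothesis (i), namely $\tau^{-1}(P_{\beta}) \subset P_{\beta}$, propagates membership backward along the orbit: from $\tau(\tau^{k_{0}-1}(\bv)) \in P_{\beta}$ I get $\tau^{k_{0}-1}(\bv) \in P_{\beta}$, and iterating down to the index $0$ yields $\bv \in P_{\beta}$. By the definition of $\delta$ every coordinate of a point of $P_{\beta}$ has absolute value at most $\delta$, so $\bv \in \lbc -\delta,\delta\rbc^{d-1} \cap V_{\beta}$, whence $\bv \in F_{\beta}$ by hypothesis (ii). But $\bv \in P_{\beta}$ is a nonzero periodic point, so $\bv \notin F_{\beta}$ — a contradiction. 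Hence $C = \{\bzero\}$ for every $\bv \in V_{\beta}$, giving $V_{\beta} \subset F_{\beta}$, and Proposition \ref{prop} then delivers property (F$_{1}$).
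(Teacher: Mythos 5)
Your reduction to $V_{\beta}\subset F_{\beta}$ via Proposition \ref{prop} and your endgame (backward propagation of $P_{\beta}$-membership via hypothesis (i), the $\delta$-bound, and hypothesis (ii) to force a contradiction) coincide with the paper's. But the step you yourself flag as ``the main obstacle'' --- showing that the $\tau$-orbit of an arbitrary $\bv\in V_{\beta}$ eventually meets $Q_{\beta}$ --- is a genuine gap, and the sketch you give for it does not close. Contractivity only tells you the orbit eventually enters a bounded region; $Q_{\beta}$ is not a bounded region but a specific $\{\tau,\tau^{*}\}$-reachability set from $\wl_{I}$, so ``enters a fixed finite region'' does not place the orbit in $Q_{\beta}$. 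Your one-step carry formula $\tau(\bv)=-\sum_{n}\omega_{n}\tau^{n}(\wl_{I})+\kappa\,\wl_{I}$ with $\kappa=\lceil\sum_{n}\omega_{n}T^{n}(1)\rceil\geq 0$ is correct, but it immediately takes you out of the class $-\sum\omega_{n}'\tau^{n-1}(\wl_{I})$ with $\omega_{n}'\geq 0$, so the iteration does not close up and no $\{\tau,\tau^{*}\}$-path from $\wl_{I}$ to a point of the orbit is actually exhibited.

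The paper resolves exactly this difficulty by two devices you are missing. First, it filters $V_{\beta}=\bigcup_{n}R_{n}$ (Remark \ref{V}), peeling off one generator $\tau^{j}(\wl_{I})$ at a time, and argues by minimality of the level $n$ at which $F_{\beta}$ fails; this guarantees $\wl=\wl'-\tau^{j}(\wl_{I})$ with $\wl'\in F_{\beta}$. Second, it tracks not the orbit of $\wl$ itself but the \emph{difference} $\wk_{m+1}=\tau^{m}(\wl')-\tau(\tau^{m-1}(\wl')-\wk_{m})$, which by Lemma \ref{tau}-(3)-(ii) moves by $\tau$ or $\tau^{*}$ and hence stays in $Q_{\beta}$ since $\wk_{1}=\tau^{j}(\wl_{I})\in Q_{\beta}$. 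When the orbit of $\wl'$ reaches $\bzero$ one gets $\tau^{N}(\wl)=-\wk_{N+1}\in -Q_{\beta}$ --- note the sign: one lands in $-Q_{\beta}$, not $Q_{\beta}$, and the identification $-Q_{\beta}=Q_{\beta}$ is Proposition \ref{p_beta}, which itself consumes hypothesis (i) a second time. Your sketch overlooks both the need for this sign correction and the induction on construction depth that makes the two-vector difference lemma applicable; without them the claim $C\subset P_{\beta}$ is unsubstantiated.
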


\subsection{Preparation for the Proof of Theorem \ref{main2}}

Let 
$$-Q_{\beta}:= \{-\wl \in \mathbb{Z}^{d-1} \ | \ \wl \in Q_{\beta}\}.$$

\begin{Obs}\label{obs3}
If $-\wl_{I} \in Q_{\beta}$, 
then $Q_{\beta}=-Q_{\beta}$. 
\end{Obs}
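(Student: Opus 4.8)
The plan is to reduce the set equality to a single closure property and then exploit the fact that negation interchanges $\tau$ and $\tau^{*}$.

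First I would record the negation--swap identities. From the defining relation $\tau^{*}(\wl)=-\tau(-\wl)$ one obtains immediately
$$\tau(-\wl)=-\tau^{*}(\wl) \quad\text{and}\quad \tau^{*}(-\wl)=-\tau(\wl)$$
for every $\wl \in \mathbb{Z}^{d-1}$ (the second follows by substituting $-\wl$ for $\wl$ in the first). In words, applying $\tau$ to $-\wl$ yields the negative of $\tau^{*}(\wl)$, and applying $\tau^{*}$ to $-\wl$ yields the negative of $\tau(\wl)$.

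Second, I would observe that $Q_{\beta}=-Q_{\beta}$ is equivalent to the single inclusion $-Q_{\beta}\subset Q_{\beta}$: since $\wl\mapsto-\wl$ is an involution, applying it to $-Q_{\beta}\subset Q_{\beta}$ gives $Q_{\beta}\subset -Q_{\beta}$, and the two inclusions together yield the assertion. Thus it suffices to prove the closure property that $\wl\in Q_{\beta}$ implies $-\wl\in Q_{\beta}$.

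Third, I would prove this closure property by a concatenation-of-paths argument. Fix $\wl \in Q_{\beta}$ together with a witnessing sequence $\wl_{I}=\wl_{1},\dots,\wl_{N}=\wl$ satisfying $\wl_{n+1}\in\{\tau(\wl_{n}),\tau^{*}(\wl_{n})\}$ for all $n$. Negating every term and invoking the identities from the first step, the sequence $-\wl_{1},\dots,-\wl_{N}$ again satisfies $-\wl_{n+1}\in\{\tau(-\wl_{n}),\tau^{*}(-\wl_{n})\}$ (each $\tau$-step becomes a $\tau^{*}$-step and conversely), so it is a valid $\{\tau,\tau^{*}\}$-path from $-\wl_{I}$ to $-\wl$. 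By hypothesis $-\wl_{I}\in Q_{\beta}$, so there exists a $\{\tau,\tau^{*}\}$-path from $\wl_{I}$ to $-\wl_{I}$; concatenating it with the negated sequence produces a path from $\wl_{I}$ to $-\wl$, whence $-\wl\in Q_{\beta}$. The argument is essentially bookkeeping, so I do not expect a genuine obstacle; the one point that must be checked with care is that negation really does swap the two admissible moves at each step, which is exactly the content of the negation--swap identities and is where the defining relation $\tau^{*}(\wl)=-\tau(-\wl)$ is used.
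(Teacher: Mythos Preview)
Your proof is correct and follows essentially the same approach as the paper: both reduce to the inclusion $-Q_{\beta}\subset Q_{\beta}$, use the identity $\tau^{*}(\wl)=-\tau(-\wl)$ to show that negating a $\{\tau,\tau^{*}\}$-path yields another such path (with the two moves swapped), and then concatenate with a path from $\wl_{I}$ to $-\wl_{I}$ furnished by the hypothesis. The paper's write-up is slightly terser (it phrases the concatenation as ``since $-\wl_{I}\in Q_{\beta}$, we have $\wk_{n}\in Q_{\beta}$ for all $n$''), but the argument is the same.
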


\begin{proof}
It suffices to show $-Q_{\beta} \subset Q_{\beta}$. 
Let $\wl \in -Q_{\beta}$. Then there is $\{\wk_{n}\}_{n=1}^{N}$ such that 
$\wk_{1}=-\wl_{I}$, $\wk_{N}=\wl$ and 
$\wk_{n+1} \in \{\tau(\wk_{n}),\tau^{*}_{\beta}(\wk_{n})\}$ 
because $\tau(\wk)=-\tau^{*}(-\wk)$. 
Since $-\wl_{I} \in Q_{\beta}$, 
we have $\wk_{n} \in Q_{\beta}$ for all $n \in \lbc1,N\rbc$. 
Hence $\wl=\wk_{N} \in Q_{\beta}$. 
\end{proof}

In this subsection, we aim to prove the following proposition. 

\begin{Prop}\label{p_beta}
Let $\beta$ be a Pisot number. 
If $\tau^{-1}(P_{\beta}) \subset P_{\beta}$, 
then $Q_{\beta}=-Q_{\beta}$. 
\end{Prop}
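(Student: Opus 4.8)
The plan is to reduce the statement to a single membership via Observation \ref{obs3}: it suffices to prove $-\wl_{I} \in Q_{\beta}$, since Observation \ref{obs3} then yields $Q_{\beta} = -Q_{\beta}$. To this end I would track the forward $\tau$-orbit $\wl_{I}, \tau(\wl_{I}), \tau^{2}(\wl_{I}), \dots$, which is bounded (as $\tau$ is contractive for the Pisot number $\beta$) and hence, being integer-valued, eventually periodic. The hypothesis $\tau^{-1}(P_{\beta}) \subset P_{\beta}$ constrains how this orbit settles down: were it to enter a nonzero cycle $C \subset P_{\beta}$ for the first time at $\tau^{m}(\wl_{I})$ with $m \geq 1$, then $\tau^{m-1}(\wl_{I})$ would lie in $\tau^{-1}(P_{\beta}) \subset P_{\beta}$ and thus be periodic too, contradicting the minimality of $m$. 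So under the hypothesis there are only two possibilities: the orbit reaches $\bzero$ (i.e. $\wl_{I} \in F_{\beta}$), or $\wl_{I}$ is itself $\tau$-periodic.

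In the terminating case the argument is short and explicit. Let $k \geq 1$ be minimal with $\tau^{k}(\wl_{I}) = \bzero$ and set $\wl := \tau^{k-1}(\wl_{I}) \in Q_{\beta}$. From $\tau(\wl) = \bzero$ and the definition of $\tau$ we get $\wl = (p_{1}, 0, \dots, 0)$ with $\lfloor \lambda(\wl)\rfloor = 0$; minimality of $k$ forces $p_{1} \neq 0$, and since $a_{0} \neq 0$ we have $\lambda(\wl) = r_{1}p_{1} \in (0,1)$. Consequently $\lambda(-\wl) = -r_{1}p_{1} \in (-1,0)$, so $\lfloor \lambda(-\wl)\rfloor = -1$ and $\tau(-\wl) = (0, \dots, 0, 1) = \wl_{I}$. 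Hence
$$\tau^{*}(\wl) = -\tau(-\wl) = -\wl_{I}.$$
As $\wl \in Q_{\beta}$ and $Q_{\beta}$ is closed under $\tau^{*}$, this gives $-\wl_{I} \in Q_{\beta}$, which is exactly what Observation \ref{obs3} needs.

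The case where $\wl_{I}$ lies on a nonzero cycle $C$ is the genuine obstacle, since there the predecessor of $\wl_{I}$ maps under $\tau^{*}$ to $\bzero$ or to $\wl_{I}$, never to $-\wl_{I}$. My plan here would be to work with the negated cycle: because $\tau^{*}(\wk) = -\tau(-\wk)$ for every $\wk$, the set $-C$ is a $\tau^{*}$-cycle passing through $-\wl_{I}$, so it would be enough to deposit a single point of $-C$ into $Q_{\beta}$ and then run $\tau^{*}$ around $-C$. To find such a point I would squeeze the hypothesis further: $\tau^{-1}(P_{\beta}) \subset P_{\beta}$ makes $P_{\beta}$ backward $\tau$-invariant, forcing each point of $C$ to have a unique $\tau$-preimage (necessarily on $C$) and thereby restricting the integers that can occur as the axis coordinate of a preimage of $\wl_{I}$. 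I expect this rigidity, combined with the contractivity of $\tau$ and $\tau^{*}$, to be what either rules out this case or produces the required point of $-C$; turning that expectation into an unconditional argument is the hard part of the proof.
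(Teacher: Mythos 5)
Your reduction to showing $-\wl_{I} \in Q_{\beta}$ (via Observation \ref{obs3}), your dichotomy ``$\wl_{I} \in F_{\beta}$ or $\wl_{I} \in P_{\beta}$'', and your treatment of the terminating case are all sound and agree in substance with the paper (the terminating case is exactly Lemma \ref{tau}-(1)/(2): $\tau^{*}(\tau^{k-1}(\wl_{I})) = \tau^{k}(\wl_{I}) - \wl_{I} = -\wl_{I}$). But the case you yourself flag as ``the genuine obstacle'' --- $\wl_{I}$ lying on a nonzero cycle --- is precisely the case the hypothesis $\tau^{-1}(P_{\beta}) \subset P_{\beta}$ exists to handle, and you leave it as an unproven plan. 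As written, the proposal is therefore incomplete: ``deposit a single point of $-C$ into $Q_{\beta}$'' is the right goal, but you give no mechanism for producing that point, and contractivity alone will not do it.

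The missing mechanism in the paper is concrete and short. Since $\wl_{I} \in P_{\beta}$ and $P_{\beta}$ is backward $\tau$-invariant, two successive $\tau$-preimages of $\wl_{I}$ lie in $P_{\beta}$, and because $\tau$ acts by shifting coordinates they necessarily have the form $(\eta,0,\cdots,0)$ and $(\zeta,\eta,0,\cdots,0)$. Applying $\tau^{*} = \tau - \wl_{I}$ (Lemma \ref{tau}-(1)) to the nonzero vector $(\zeta,\eta,0,\cdots,0)$ puts
$$(\eta,0,\cdots,0,-1) = (\eta,0,\cdots,0) - \wl_{I}$$
into $Q_{\beta}$. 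Now the additivity statement Lemma \ref{tau}-(3) gives
$$\tau\bigl((\eta,0,\cdots,0) - \wl_{I}\bigr) \in \{\tau(\eta,0,\cdots,0) + \tau(-\wl_{I}), \ \tau^{*}(\eta,0,\cdots,0) + \tau(-\wl_{I})\} = \{\wl_{I} + \tau(-\wl_{I}), \ \tau(-\wl_{I})\},$$
using $\tau(\eta,0,\cdots,0) = \wl_{I}$ and $\tau^{*}(\eta,0,\cdots,0) = \wl_{I} - \wl_{I} = \bzero$. In either alternative, one application of $\tau$ or of $\tau^{*}$ to $(\eta,0,\cdots,0,-1)$ lands on $\tau(-\wl_{I})$, so $\tau(-\wl_{I}) \in Q_{\beta}$; since in this branch $(Q_{\beta}\setminus\{\bzero\}) \cap F_{\beta} = \emptyset$, Lemma \ref{tau}-(4) forces $\tau(-\wl_{I}) \in P_{\beta}$, and backward invariance then yields $-\wl_{I} \in P_{\beta} \subset Q_{\beta}$. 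This is the step your proposal needs; without it the proof does not close.
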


The following lemma states fundamental properties of SRS $\tau$. 

\begin{Lem}\label{tau}
 \ 
\begin{description}
\item{(1)} \ If $\wl \neq \bzero$, 
then $\tau^{*}(\wl) = \tau(\wl)-\wl_{I}$. 
\item{(2)} \ If $(Q_{\beta}\setminus \{\bzero\}) \cap F_{\beta} \neq \emptyset$, 
then $-Q_{\beta} = Q_{\beta}$. 
\item{(3)} \ Let $\wl,\wl' \in \mathbb{Z}^{d-1}$ be given. Then 
\begin{description}
\item{(i)} \ 
$\tau(\wl+\wl') \in 
\{ \tau(\wl)+\tau(\wl'), 
\tau(\wl)+\tau^{*}_{\beta}(\wl')\}$. 
\item{(ii)} \ $\tau(\wl-\wl') \in 
\{ \tau(\wl)-\tau(\wl'), 
\tau(\wl)-\tau^{*}_{\beta}(\wl')\}$. 
\end{description}
\item{(4)} \ Let $\beta$ be a Pisot number. If $\tau^{-1}(P_{\beta}) \subset P_{\beta}$, then $Q_{\beta} \setminus F_{\beta} \subset P_{\beta}$. 
\end{description}
\end{Lem}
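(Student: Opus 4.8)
The plan is to handle the four parts in the order $(1)\to(3)\to(2)\to(4)$, since (1) is the computational engine on which both (2) and (3) rest, while (4) is a separate dynamical argument. For (1), I would write $\wl=(l_{1},\dots,l_{d-1})$ and unwind $\tau^{*}(\wl)=-\tau(-\wl)$ coordinatewise: every coordinate but the last agrees with $\tau(\wl)$ automatically, and the last coordinate of $\tau^{*}(\wl)-\tau(\wl)$ equals $\lfloor\lambda(\wl)\rfloor+\lfloor-\lambda(\wl)\rfloor$. The elementary identity $\lfloor x\rfloor+\lfloor-x\rfloor=-1$ for $x\notin\mathbb{Z}$ then produces exactly $-\wl_{I}$, provided $\lambda(\wl)\notin\mathbb{Z}$. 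This is precisely where $\wl\neq\bzero$ enters: since $\{\lambda\}$ is a bijection onto $\mathbb{Z}[\beta]\cap[0,1)$ with $\{\lambda\}(\bzero)=0$, for $\wl\neq\bzero$ we have $\{\lambda(\wl)\}=\{\lambda\}(\wl)\neq0$, i.e. $\lambda(\wl)\notin\mathbb{Z}$.

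For (3) the same coordinatewise reduction applies, now driven by linearity $\lambda(\wl\pm\wl')=\lambda(\wl)\pm\lambda(\wl')$. In (i) the only discrepancy between $\tau(\wl+\wl')$ and $\tau(\wl)+\tau(\wl')$ is the carry $\lfloor\lambda(\wl+\wl')\rfloor-\lfloor\lambda(\wl)\rfloor-\lfloor\lambda(\wl')\rfloor=\lfloor\{\lambda(\wl)\}+\{\lambda(\wl')\}\rfloor\in\{0,1\}$; when it is $0$ we land on $\tau(\wl)+\tau(\wl')$, and when it is $1$ (which forces $\wl'\neq\bzero$) part (1) rewrites $\tau(\wl')-\wl_{I}=\tau^{*}(\wl')$, giving $\tau(\wl)+\tau^{*}(\wl')$. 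Part (ii) is identical with the borrow $\lfloor\{\lambda(\wl)\}-\{\lambda(\wl')\}\rfloor\in\{-1,0\}$, and I would dispose of the degenerate case $\wl'=\bzero$ separately, where both target values collapse to $\tau(\wl)$.

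For (2), I take $\wl\in(Q_{\beta}\setminus\{\bzero\})\cap F_{\beta}$ and let $k$ be minimal with $\tau^{k}(\wl)=\bzero$; since $\wl\neq\bzero$ we have $k\geq1$, so $\wl'':=\tau^{k-1}(\wl)$ is a nonzero element of $Q_{\beta}$ (using that $Q_{\beta}$ is closed under $\tau$) satisfying $\tau(\wl'')=\bzero$. Part (1) then gives $\tau^{*}(\wl'')=\tau(\wl'')-\wl_{I}=-\wl_{I}$, and because $\tau^{*}(\wl'')\in Q_{\beta}$ we conclude $-\wl_{I}\in Q_{\beta}$; Observation \ref{obs3} finishes with $-Q_{\beta}=Q_{\beta}$. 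For (4), I use that $\tau$ maps the \emph{finite} set $Q_{\beta}$ into itself (finiteness from the Pisot hypothesis), so the forward orbit of any $\wl\in Q_{\beta}$ is eventually periodic. If $\wl\notin F_{\beta}$, no iterate is $\bzero$, so the eventual cycle consists of nonzero points of $Q_{\beta}$, each lying in $P_{\beta}$; in particular $\tau^{m}(\wl)\in P_{\beta}$ for some $m$. I then pull back: since $\tau(\tau^{m-1}(\wl))\in P_{\beta}$, the hypothesis $\tau^{-1}(P_{\beta})\subset P_{\beta}$ gives $\tau^{m-1}(\wl)\in P_{\beta}$, and descending induction yields $\wl\in P_{\beta}$.

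The floor-function bookkeeping in (1) and (3) accounts for most of the writing but carries no real difficulty; the one genuine point of care is invoking $\wl\neq\bzero$ through the bijectivity of $\{\lambda\}$ in (1), since (2) and (3) both depend on it. I expect the main substantive step to be (4): once the finiteness of $Q_{\beta}$ and its closure under $\tau$ are established, the eventual-periodicity argument combined with the pullback along $\tau^{-1}(P_{\beta})\subset P_{\beta}$ is short, but it is the only place where the global structure of the orbit, rather than a single-step identity, is used.
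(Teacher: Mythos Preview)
Your proposal is correct and follows essentially the same route as the paper: the bijectivity of $\{\lambda\}$ to get $\lambda(\wl)\notin\mathbb{Z}$ in (1), the carry/borrow bound on $\lfloor\lambda(\wl\pm\wl')\rfloor$ in (3), the step $\tau^{*}(\tau^{k-1}(\wl))=-\wl_{I}$ combined with Observation~\ref{obs3} in (2), and finiteness of $Q_{\beta}$ plus pullback along $\tau^{-1}(P_{\beta})\subset P_{\beta}$ in (4). The only cosmetic difference is that the paper derives (3)(ii) from (3)(i) via the identities $\tau(\wl)-\tau(\wl')=\tau(\wl)+\tau^{*}(-\wl')$ and $\tau(\wl)-\tau^{*}(\wl')=\tau(\wl)+\tau(-\wl')$, whereas you redo the borrow computation directly; both are fine.
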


When $\beta$ is a Pisot number, it is not known whether the converse of Lemma \ref{tau}-(4) holds. 

\begin{proof}[Proof of Lemma \ref{tau}]
(1) Let $\wl=(l_{1},l_{2},\cdots,l_{d-1}) \in \mathbb{Z}^{d-1}$ 
and $\wl \neq \bzero$. 
By the bijectivity of $\{\lambda\}$, 
$\{\lambda(\wl)\} \neq 0$ and so $\lambda(\wl) \notin \mathbb{Z}$. 
Therefore $-\lfloor \lambda(\wl) \rfloor - 1 
< \lambda(-\wl) < -\lfloor \lambda(\wl) \rfloor$. 
Hence 
\begin{equation*}
\begin{split}
\tau^{*}(\wl) 
&= -(-l_{2},\cdots,-l_{d-1},-\lfloor \lambda(-\wl) \rfloor) \\
&= (l_{2},\cdots,l_{d-1},-\lfloor \lambda(\wl)\rfloor-1) \\
&= \tau(\wl) - \wl_{I}. \\
\end{split}
\end{equation*}

(2) By Observation \ref{obs3}, 
it suffices to show $-\wl_{I} \in Q_{\beta}$. 
Let $\wl \in (Q_{\beta}\setminus \{\bzero\}) \cap F_{\beta}$ be given. 
Then $\tau^{N-1}(\wl) \neq \bzero$ and $\tau^{N}(\wl) = \bzero$ 
for some $N \in \mathbb{N}$. 
So by definition of $Q_{\beta}$ and (1), 
$$Q_{\beta} \ni \tau^{*}(\tau^{N-1}(\wl)) 
= \tau^{N}(\wl)-\wl_{I} 
= -\wl_{I}.$$

(3) By definition of $\tau^{*}$, 
\begin{equation*}
\begin{split}
&\tau(\wl)-\tau^{*}_{\beta}(\wl') 
= \tau(\wl)+\tau(-\wl') \\
&\tau(\wl)-\tau(\wl') 
= \tau(\wl)+\tau^{*}(-\wl'). \\
\end{split}
\end{equation*}
Thus (i) implies (ii). So it suffices to show (i). 
By definition of $\tau$ and $\tau^{*}$, 
it suffices to consider only $(d-1)$-th coordinates 
of $\tau(\wl+\wl')$, 
$\tau(\wl)+\tau(\wl')$ and 
$\tau(\wl)+\tau^{*}(\wl')$. 
Since 
$$\lfloor \lambda(\wl) \rfloor 
+ \lfloor \lambda(\wl') \rfloor 
\leq \lambda(\wl+\wl') 
< \lfloor \lambda(\wl) \rfloor 
+ \lfloor \lambda(\wl') \rfloor +2,$$
we have 
$$\lfloor \lambda(\wl) \rfloor 
+ \lfloor \lambda(\wl') \rfloor 
\leq \lfloor \lambda(\wl+\wl') \rfloor 
\leq \lfloor \lambda(\wl) \rfloor 
+ \lfloor \lambda(\wl') \rfloor +1$$
and so 
$$\tau(\wl+\wl') \in 
\{\tau(\wl)+\tau(\wl'), 
\tau(\wl)+\tau(\wl')-\wl_{I}\}.$$
Hence (3) holds by (1). 

(4) Let $\wl \in Q_{\beta} \setminus F_{\beta}$ be given. 
Since $Q_{\beta}$ is finite, 
$\tau^{N}(\wl) \in P_{\beta}$ for some $N \in \mathbb{N}_{0}$. 
Hence (4) holds. 
\end{proof}

\begin{proof}[Proof of Proposition \ref{p_beta}]
If $(Q_{\beta}\setminus \{\bzero\}) \cap F_{\beta} \neq \emptyset$, 
then $Q_{\beta}=-Q_{\beta}$ by Lemma \ref{tau}-(2). 
Consider the case 
$(Q_{\beta}\setminus \{\bzero\}) \cap F_{\beta}=\emptyset$. 
By Observation \ref{obs3}, 
it suffices to show $-\wl_{I} \in Q_{\beta}$. 
Note $\wl_{I} \notin F_{\beta}$ in this case. 
So by Lemma \ref{tau}-(4), $\wl_{I} \in P_{\beta}$. 
Since $\tau^{-1}(P_{\beta}) \subset P_{\beta}$, 
there is $(\eta,0,\cdots,0) \in P_{\beta}$ such that 
$\tau(\eta,0,\cdots,0)=\wl_{I}$. 
Similarly, $\tau(\zeta,\eta,0,\cdots,0)=(\eta,0,\cdots,0)$ 
for some $(\zeta,\eta,0,\cdots,0) \in P_{\beta}$. 
So, since $(\zeta,\eta,0,\cdots,0) \neq \bzero$, 
we have by Lemma \ref{tau}-(1), 
$$Q_{\beta} \ni \tau^{*}(\zeta,\eta,0,\cdots,0) 
= \tau(\zeta,\eta,0,\cdots,0)-\wl_{I} 
= (\eta,0,\cdots,0,-1).$$
Notice that by Lemma \ref{tau}-(3)-(i), 
\begin{equation*}
\begin{split}
\tau(\eta,0,\cdots,0,-1) 
&= \tau((\eta,0,\cdots,0) - \wl_{I}) \\
&\in \{ \tau(\eta,0,\cdots,0)+\tau(-\wl_{I}), 
\tau^{*}(\eta,0,\cdots,0)+\tau(-\wl_{I})\}. \\
\end{split}
\end{equation*}
Now we prove the assertion by classifying the following cases: 
\begin{description}
\item{$Case \ 1$}. $\tau(\eta,0,\cdots,0,-1) 
= \tau(\eta,0,\cdots,0)+\tau(-\wl_{I})$. 
\item{$Case \ 2$}. $\tau(\eta,0,\cdots,0,-1) 
= \tau^{*}(\eta,0,\cdots,0)+\tau(-\wl_{I})$. 
\end{description}

$Case \ 1$: By Lemma \ref{tau}-(1), 
\begin{equation*}
\begin{split}
Q_{\beta} \ni \tau^{*}(\eta,0,\cdots,0,-1) 
&= \tau(\eta,0,\cdots,0,-1)-\wl_{I} \\
&= \tau(\eta,0,\cdots,0)+\tau(-\wl_{I})-\wl_{I} \\
&= \tau(-\wl_{I}). \\
\end{split}
\end{equation*}
Since $(Q_{\beta} \setminus\{\bzero\}) \cap F_{\beta} = \emptyset$, 
$\tau(-\wl_{I}) \notin F_{\beta}$ 
and so $\tau(-\wl_{I}) \in P_{\beta}$ 
by Lemma \ref{tau}-(4). 
Hence $-\wl_{I} \in Q_{\beta}$ because 
$\tau^{-1}(P_{\beta}) \subset P_{\beta}$. 

$Case \ 2$: By Lemma \ref{tau}-(1), 
\begin{equation*}
\begin{split}
Q_{\beta} \ni \tau(\eta,0,\cdots,0,-1) 
&= \tau^{*}(\eta,0,\cdots,0)+\tau(-\wl_{I}) \\
&= \tau(\eta,0,\cdots,0)-\wl_{I}+\tau(-\wl_{I}) \\
&= \tau(-\wl_{I}). \\
\end{split}
\end{equation*}
So in the same way as Case 1, 
we have the desired result. 
\end{proof}

\subsection{Proof of Theorem \ref{main2}}

Recall 
$$V_{\beta} = 
\left\{ -\sum_{n=1}^{r} \omega_{n}\tau^{n-1}(\wl_{I}) \ \Biggl| \ r \in \mathbb{N}, \ \omega_{n} \in \mathbb{N}_{0} \ (n \in \lbc1,r\rbc) \right\}$$
and 
$$\delta=\max\{ |l_{j}| \ | \ (l_{1},\cdots,l_{d-1}) \in P_{\beta}, j\in \lbc1,d-1\mbox{\textnormal{$\rbc$}}\}.$$
Define
\begin{equation*}
\begin{array}{l}
R_{0}=\lbc-\delta,\delta \rbc^{d-1} \cap V_{\beta} \\
R_{n}= 
\{ \wl \in V_{\beta} \ | \ \wl=\wl'-\tau^{j}(\wl_{I}) \ 
\mbox{for some $j \in \mathbb{N}_{0}$ and $\wl' \in R_{n-1}$} \} \ (n \in \mathbb{N}) \\
R=\bigcup_{n=1}^{\infty} R_{n}. 
\end{array}
\end{equation*} 

\begin{Remark}\label{V}
$V_{\beta} = R$ holds. 
\end{Remark}

\begin{proof}
Clearly, $R \subset V_{\beta}$. So it suffices to show $V_{\beta} \subset R$. 
Let $\wl \in V_{\beta}$ be given. 
Since $\bzero \in R_{0} \subset R$, 
we may assume $\wl \neq \bzero$. By definition of $V_{\beta}$, 
there exist $\{n_{j}\}_{j=1}^{p} \subset \mathbb{N}$ and 
$\omega_{n_{j}} \in \mathbb{N}$ such that 
$$\wl=-\sum_{j=1}^{p}\omega_{n_{j}}\tau^{n_{j}-1}(\wl_{I}).$$
Let $v_{q} := \sum_{j \in \lbc1,q\rbc} \omega_{n_{j}}$ ($q \in \lbc0,p\rbc$). 
For $m \in \lbc1,v_{p}\rbc$, define 
\begin{equation*}
\wl_{m} := - \sum_{j=1}^{q-1} \omega_{n_{j}}\tau^{n_{j}-1}(\wl_{I}) 
- (m-v_{q-1})\tau^{n_{q}-1}(\wl_{I}) \ 
\mbox{if $m \in \lbc v_{q-1}+1,v_{q}\rbc$.} 
\end{equation*}
Now we want to show $\wl_{m} \in R$ for all $m \in \lbc1,v_{p}\rbc$. 
Since $\bzero \in R_{0}$, 
we have $\wl_{1}=-\tau^{n_{1}-1}(\wl_{I}) \in R_{1}$. 
Suppose that $v_{q-1}<m \leq v_{q}$ and $\wl_{m} \in R$. 
Thus $\wl_{m} \in R_{k}$ for some $k \in \mathbb{N}_{0}$. 
Since 
\begin{equation*}
\wl_{m+1} = \left\{ 
\begin{array}{ll}
\wl_{m}-\tau^{n_{q}-1}(\wl_{I}) & \mbox{if $m<v_{q}$} \\
\wl_{m}-\tau^{n_{q+1}-1}(\wl_{I}) 
& \mbox{if $m=v_{q}$,}
\end{array}
\right. 
\end{equation*}
we have $\wl_{m+1} \in R_{k+1}$. 
Hence by induction on $m$, we get the assertion. 
\end{proof}

\begin{proof}[Proof of Theorem \ref{main2}]
By Proposition \ref{prop} and Remark \ref{V}, 
it suffices to show that $R \subset F_{\beta}$. 
Assume that $R \setminus F_{\beta} \neq \emptyset$. 
Then we can define 
$$n=\min\{ j \in \mathbb{N}_{0} \ | \ R_{j} \setminus F_{\beta} \neq \emptyset\}.$$
Let $\wl \in R_{n} \setminus F_{\beta}$. 
By definition of $R_{n}$, 
we can pick $j \in \mathbb{N}_{0}$ and $\wl' \in R_{n-1}$ such that 
$\wl= \wl'-\tau^{j}(\wl_{I})$. 
Define 
$$\wk_{1}=\tau^{j}(\wl_{I}) \ \mbox{and} \ 
\wk_{m+1}= \tau^{m}(\wl') 
- \tau(\tau^{m-1}(\wl')-\wk_{m}) \ (m \geq 1).$$
Then by Lemma \ref{tau}-(3)-(ii), 
$\wk_{m+1} \in \{\tau(\wk_{m}), \tau^{*}(\wk_{m})\}$ 
for $m \geq 1$. 
So $\wk_{m} \in Q_{\beta}$ for all $m \in \mathbb{N}$ 
because $\wk_{1} \in Q_{\beta}$. 
On the other hand, 
since $R_{n-1} \subset F_{\beta}$ by definition of $n$, 
there is $N \in \mathbb{N}_{0}$ such that 
$\tau^{N}(\wl')=\bzero$. Then we get 
$$\tau^{N}(\wl) 
= \tau^{N-1}(\tau(\wl')-\wk_{2}) 
= \cdots = \tau^{N}(\wl')-\wk_{N+1} 
= -\wk_{N+1} \in -Q_{\beta}.$$
Since $\tau^{N}(\wl)=-\wk_{N+1} \in Q_{\beta}$ 
by Proposition \ref{p_beta} and 
$\tau^{N}(\wl) \notin F_{\beta}$ by the assumption of $\wl$, 
we have $\tau^{N}(\wl) \in P_{\beta}$ by Lemma \ref{tau}-(4). 
Therefore $\wl \in P_{\beta}$ 
because $\tau^{-1}(P_{\beta}) \subset P_{\beta}$. 
So 
$$\wl \in \lbc-\delta,\delta\rbc^{d-1} \cap V_{\beta} = R_{0} \subset F_{\beta}.$$
This contradicts $\wl \notin F_{\beta}$. 
\end{proof}

\subsection{The expansion of $\lfloor \beta \rfloor+1$}

The following remark is used in subsection \ref{subsec:CPcase}. 

\begin{Remark}\label{floor_beta+1}
Suppose $\beta\geq (1+\sqrt{5})/2$. Then 
$\lfloor \beta\rfloor +1$ has the finite beta-expansion 
if and only if $-\wl_{I} \in F_{\beta}$. 
\end{Remark}

\begin{proof}
Since $\beta\geq (1+\sqrt{5})/2$, we have $\beta+1 \leq \beta^{2}$ and so 
$$\lfloor \beta \rfloor +1<\beta^{2}.$$
That is, $L(\lfloor \beta \rfloor +1) = 2$. Now we have 
$$T((\lfloor \beta \rfloor +1 )\beta^{-2}) 
= \beta^{-1}(\lfloor \beta \rfloor + 1)-1 
= \beta^{-1}(\lfloor \beta \rfloor + 1-\beta).$$
Therefore, since $\lfloor \beta \rfloor + 1 -\beta=\{-\beta\}$, 
we get 
$$\mbox{the beta-expansion of $\lfloor \beta \rfloor+1$} 
= 10.d_{\beta}(\{-\beta\}).$$
Thus it suffices to show that 
$\{\lambda\}(\tau^{n}(-\wl_{I})) 
= T^{n}(\{-\beta\})$ for all $n \in \mathbb{N}_{0}$. 
Recall $\lambda(\wl_{I}) = \beta-a_{d-1}$ by \eqref{eq:prop1}. 
So, since $\lambda(-\wl_{I}) = a_{d-1}-\beta$, we have 
$$\{\lambda\}(-\wl_{I}) \equiv a_{d-1}-\beta 
\equiv -\beta \mod{\mathbb{Z}}.$$
Hence by the commutative diagram, 
we have the desired result. 
\end{proof}

\section{Cubic Pisot case}\label{sec:main1}

First, we introduce some important properties of Pisot numbers. 
By relations between roots and coefficients, we have the following observation. 

\begin{Obs}\label{c_ineq}
Let $\beta>1$ be a Pisot number with minimal polynomial $x^{3}-ax^{2}-bx-c$. 
Then $|c|< \beta$. 
\end{Obs}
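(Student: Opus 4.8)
The plan is to read off the constant term $c$ from the relation between roots and coefficients (Vieta's formulas) and then invoke the defining property of a Pisot number to bound the moduli of the conjugates. Since $\beta$ is a cubic Pisot number, its minimal polynomial $x^{3}-ax^{2}-bx-c$ has exactly three roots: $\beta$ itself and its two conjugates, which I will denote $\beta'$ and $\beta''$. By the Pisot hypothesis, every conjugate other than $\beta$ has modulus strictly less than $1$, so $|\beta'|<1$ and $|\beta''|<1$ (this holds whether the two conjugates are real or a complex-conjugate pair, since we only use their moduli).

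First I would compute the product of the three roots. Comparing $x^{3}-ax^{2}-bx-c=(x-\beta)(x-\beta')(x-\beta'')$ and matching constant terms gives $\beta\beta'\beta''=c$. Taking absolute values yields
$$|c|=\beta\,|\beta'|\,|\beta''|.$$
Since $\beta>0$ and both $|\beta'|<1$, $|\beta''|<1$, the product $|\beta'||\beta''|<1$, and therefore $|c|=\beta\,|\beta'||\beta''|<\beta$, which is the desired inequality.

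There is essentially no obstacle here; the argument is a one-line consequence of Vieta's formula together with the definition of a Pisot number. The only point that requires a moment's care is confirming that the two non-$\beta$ roots are genuinely the conjugates whose modulus the Pisot condition controls: this is immediate because the given cubic is the minimal polynomial of $\beta$, so its roots are exactly $\beta$ and its algebraic conjugates, and the Pisot definition recalled in the introduction applies to all of them.
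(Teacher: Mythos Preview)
Your proof is correct and is exactly the argument the paper intends: it states the observation follows ``by relations between roots and coefficients,'' i.e., Vieta's formula $c=\beta\beta'\beta''$ combined with the Pisot condition $|\beta'|,|\beta''|<1$. There is nothing to add.
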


The following results have been shown by Akiyama.  

\begin{Lem}[(\cite{akiyama1})]\label{cp}
Let $\beta>1$ be an cubic number $\beta$ with minimal polynomial 
$x^{3}-ax^{2}-bx-c$. Then $\beta$ is a Pisot number if and only if 
$|b-1|<a+c$ and $c^{2}-b<\mbox{{\rm{sgn}}}(c)(1+ac)$ hold. 
\end{Lem}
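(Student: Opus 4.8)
The plan is to factor out the known root and reduce the Pisot condition to a root‑location problem for a real quadratic. Since $p(x)=x^3-ax^2-bx-c$ is the minimal polynomial of $\beta$, it is irreducible over $\mathbb{Q}$; hence $a,b,c\in\mathbb{Z}$, the root $\beta>1$ is simple, and $c=-p(0)\neq 0$, so $|c|\geq 1$. I would write $p(x)=(x-\beta)q(x)$ with $q(x)=x^2+Bx+C$, where comparing coefficients gives $B=\beta-a$ and $C=c/\beta$; the two roots $\gamma_1,\gamma_2$ of $q$ are exactly the conjugates of $\beta$ other than $\beta$ itself. Thus $\beta$ is a Pisot number if and only if both roots of $q$ lie in the open unit disk, and I would invoke the Schur–Cohn (Jury) criterion for a real monic quadratic: both roots of $x^2+Bx+C$ lie in the open unit disk if and only if $|C|<1$ and $|B|<1+C$, the latter being equivalent to $q(1)>0$ and $q(-1)>0$.

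Next I would translate the conditions $q(\pm1)>0$. Since $\beta>1$, both $1-\beta$ and $-1-\beta$ are negative, and $p(\pm1)=(\pm1-\beta)\,q(\pm1)$, so $q(1)>0\iff p(1)<0$ and $q(-1)>0\iff p(-1)<0$. Computing $p(1)=1-a-b-c$ and $p(-1)=-1-a+b-c$, these read $a+b+c>1$ and $b-1<a+c$, which combine to $|b-1|<a+c$; this is the first stated inequality. Summing $q(1)>0$ and $q(-1)>0$ also yields $C>-1$, i.e. $c>-\beta$, which I will use below.

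It remains to account for the condition $|C|<1$, i.e. $|c|<\beta$, and to match it with the second stated inequality. A direct computation shows that $p(|c|)<0$ is precisely $c^2-b<\operatorname{sgn}(c)(1+ac)$: for $c>0$ one has $p(c)=c(c^2-ac-b-1)$, and for $c<0$ one has $p(-c)=-c(c^2+ac-b+1)$, and dividing by the positive quantity $|c|$ gives the claim in each sign case. Hence I must prove the equivalence $|c|<\beta\iff p(|c|)<0$. Writing $p(|c|)=(|c|-\beta)\,q(|c|)$, the key is that $q(|c|)>0$: if $\gamma_1,\gamma_2$ are complex conjugate then $q(|c|)=\big|\,|c|-\gamma_1\big|^2>0$ automatically, while if they are real then, using $|c|\geq 1$, I locate $|c|$ outside the interval $[\gamma_1,\gamma_2]$. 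Granting $q(|c|)>0$, the sign of $p(|c|)$ equals that of $|c|-\beta$, giving the equivalence. For the forward direction (Pisot $\Rightarrow$ the inequality) the real conjugates satisfy $\gamma_i\in(-1,1)$, so $|c|\geq 1>\max(\gamma_1,\gamma_2)$ forces $q(|c|)>0$; combined with $|c|<\beta$ (Observation \ref{c_ineq}) this gives $p(|c|)<0$. For the converse, assuming $p(1)<0$, $p(-1)<0$, $p(|c|)<0$, I already have $C>-1$; if $c<0$ then $C<0<1$ is automatic, while if $c>0$ I argue by contradiction: were $c\geq\beta$ then $c>\beta$ (as $c$ is an integer and $\beta$ is irrational), so $p(c)<0$ forces $q(c)<0$, hence $\gamma_1,\gamma_2$ are real with $c$ strictly between them; but then $q(1)>0$ forces $\gamma_1,\gamma_2>1$, whence $c=\beta\gamma_1\gamma_2>\max(\gamma_1,\gamma_2)$, contradicting $q(c)<0$. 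Thus $C<1$, so $|C|<1$ and $\beta$ is Pisot.

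I expect the last paragraph to be the main obstacle: the two inequalities $q(\pm1)>0$ fall out immediately, but pinning down the constant‑term condition $|c|<\beta$ in purely polynomial terms requires the integrality $|c|\geq 1$, a split into real versus complex conjugates, a sign‑of‑$c$ case analysis, and the contradiction argument that excludes a conjugate exceeding $\beta$. Everything else is the standard Schur–Cohn criterion together with bookkeeping using $p(\beta)=0$.
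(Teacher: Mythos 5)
Your proof is correct. Note, however, that the paper itself offers no proof of this lemma: it is quoted from Akiyama's article \cite{akiyama1}, so there is no in-paper argument to compare yours against. Your route is the natural one and it checks out in every detail: factoring $p(x)=(x-\beta)(x^{2}+Bx+C)$ with $C=c/\beta$, reducing the Pisot condition to the Schur--Cohn criterion $|C|<1$, $|B|<1+C$ for the quadratic factor, translating $q(\pm1)>0$ into $p(\pm1)<0$, i.e.\ $|b-1|<a+c$, and identifying the second inequality $c^{2}-b<\mathrm{sgn}(c)(1+ac)$ with $p(|c|)<0$. The delicate points are exactly where you said they would be, and you handle them correctly: irreducibility gives $c\neq0$ hence $|c|\geq1$; in the forward direction $q(|c|)>0$ because the conjugates lie in the open unit disk (splitting into the real and complex-conjugate cases) and $|c|=\beta|\gamma_{1}\gamma_{2}|<\beta$; in the converse the case $c<0$ gives $C<0<1$ for free (so that there the second inequality is in fact a consequence of the first, which is consistent with the lemma), while for $c>0$ your contradiction argument is sound: $c>\beta$ would force $q(c)<0$, hence real roots $\gamma_{1}<c<\gamma_{2}$, and $q(1)>0$ together with $\gamma_{2}>c>1$ forces $\gamma_{1}>1$, whence $c=\beta\gamma_{1}\gamma_{2}>\gamma_{2}$, contradicting $c<\gamma_{2}$. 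This is a complete, self-contained proof of the cited result.
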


\begin{Thm}[(\cite{akiyama2})]\label{pf}
Let $\beta>1$ be a real number with property (PF). 
Then either $\beta$ satisfies property (F) or $\beta$ is 
a Pisot number whose minimal polynomial is of the form: 
$$x^{d}-(\lfloor \beta \rfloor+1)x^{d-1}+\sum_{i=2}^{d}a_{i}x^{d-i}$$
with $a_{i}\geq 0$ $(i=2,\cdots,d)$, $a_{d}>0$ and 
$\sum_{i=2}^{d}a_{i}<\lfloor \beta \rfloor$. Conversely, 
if $\beta>1$ is a root of the polynomial 
$$x^{d}-Bx^{d-1}+\sum_{i=2}^{d}a_{i}x^{d-i}$$
with $a_{i} \geq 0$ $(i=2,\cdots,d)$, $a_{d}>0$ and 
$B>1+\sum_{i=2}^{d}a_{i}$, then $\beta$ is a Pisot number 
with property (PF) and without (F). 
\end{Thm}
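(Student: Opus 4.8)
The plan is to treat the two implications of the characterization separately: the converse (building (PF)-without-(F) numbers from the coefficient pattern) is essentially self-contained, while the forward implication (forcing the pattern from the hypotheses) is where the difficulty concentrates. Throughout I write the minimal polynomial of $\beta$ as $x^{d}-b_{d-1}x^{d-1}-\cdots-b_{0}$ and use freely that property (PF) implies property (F$_{1}$), hence that $\beta$ is a Pisot number, as recalled in the introduction. I also use two classical facts about Parry numbers: for a Pisot $\beta$ the R\'enyi expansion $d_{\beta}(1)$ is eventually periodic, and property (F) forces $d_{\beta}(1)$ to be finite (Frougny--Solomyak \cite{frougny1}). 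The admissibility test (Theorem \ref{parry}), the shape of $d_{\beta}^{*}(1)$ (Remark \ref{d_1}), and the SRS conjugacy of Section \ref{sec:main2} are the main technical tools.

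For the converse, let $\beta$ be a root of $P(x)=x^{d}-Bx^{d-1}+\sum_{i=2}^{d}a_{i}x^{d-i}$ with $a_{i}\geq 0$, $a_{d}>0$ and $B>1+\sum_{i=2}^{d}a_{i}$. I would first show $\beta$ is Pisot with $\lfloor\beta\rfloor=B-1$: one computes $P(B)=\sum_{i=2}^{d}a_{i}B^{d-i}>0$ and $P(B-1)=-(B-1)^{d-1}+\sum_{i=2}^{d}a_{i}(B-1)^{d-i}<0$ (the last inequality because $\sum_{i=2}^d a_{i}<B-1$), placing the unique real root larger than $1$ in $(B-1,B)$; and on $|z|=1$ one has $|z-B|\geq B-1>\sum_{i=2}^{d}a_{i}=\sum_{i=2}^{d}a_{i}|z|^{d-i}$, so $z^{d-1}(z-B)$ dominates $\sum_{i=2}^d a_{i}z^{d-i}$ and Rouch\'e gives $d-1$ roots of $P$ strictly inside the unit disc. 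Next, to see that (F) fails, I would compute the $T$-orbit of $1$ from the relation $\beta^{d}=B\beta^{d-1}-\sum_{i=2}^{d}a_{i}\beta^{d-i}$; this orbit enters a nonzero cycle (the first digit being $\lfloor\beta\rfloor=B-1$), so $d_{\beta}(1)$ is infinite and hence, by the Frougny--Solomyak fact above, $\beta$ does not have (F). Finally, for (PF) the engine is the carry relation $B\beta^{k}=\beta^{k+1}+\sum_{i=2}^{d}a_{i}\beta^{k-i+1}$: since every $a_{i}\geq 0$, replacing a digit equal to $B=\lfloor\beta\rfloor+1$ at level $k$ by a single $1$ at level $k+1$ and nonnegative increments at levels below keeps all digits nonnegative, so the greedy reduction of any nonnegative integer digit string never introduces a negative digit. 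I would then exhibit a potential (a suitably weighted digit sum) that strictly decreases under each such carry applied to a configuration of finitely many nonnegative digits, which forces termination; positivity of the $a_{i}$ is exactly what makes the process monotone, and a genuinely signed input breaks this, consistent with (F) failing.

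For the forward implication, assume (PF) holds and (F) does not. A first step is to prove that, for a number with property (PF), property (F) is equivalent to finiteness of $d_{\beta}(1)$ — the nontrivial direction being that finite $d_{\beta}(1)$ together with (PF) yields (F); this specializes to Proposition \ref{CPcase} in the cubic case. Granting it, the failure of (F) forces $d_{\beta}(1)$ to be infinite, hence eventually periodic since $\beta$ is Pisot. I would then read off the coefficients in three stages. First, since $d_{\beta}(1)=d_{1}d_{2}\cdots$ has $d_{1}=\lfloor\beta\rfloor$, while the sub-leading coefficient $b_{d-1}$ is governed by the eventually periodic structure of $d_{\beta}^{*}(1)$ (Remark \ref{d_1}, Theorem \ref{parry}), the non-termination of $d_{\beta}(1)$ together with the termination of all positive normalizations ((PF)) forces $b_{d-1}=\lfloor\beta\rfloor+1$; informally the extra $+1$ is precisely the carry that the positive process absorbs but the signed process cannot. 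Second, to get $b_{d-i}=-a_{i}\leq 0$ for $i\geq 2$ and $b_{0}=-a_{d}<0$, I would feed the elementary elements $\beta^{-k}$ and $\lfloor\beta\rfloor+1$ (whose finiteness is controlled by Remark \ref{floor_beta+1} and Remark \ref{T1_exp}) through the SRS conjugacy of Section \ref{sec:main2}; (PF) makes each of these terminate, and the resulting constraints on $\tau$-orbits translate into the sign conditions on the coefficients. Third, the bound $\sum_{i=2}^{d}a_{i}<\lfloor\beta\rfloor$ is the admissibility (Parry) condition (Theorem \ref{parry}, Lemma \ref{fund_lem}-(2)) guaranteeing that $\lfloor\beta\rfloor$ is genuinely the first R\'enyi digit and that the periodic tail does not overflow past $d_{\beta}^{*}(1)$.

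The main obstacle is the forward implication, and within it two linked points: the equivalence ``(F) $\Leftrightarrow$ finite $d_{\beta}(1)$ under (PF)'' (its hard direction being finite $d_{\beta}(1)$ plus (PF) $\Rightarrow$ (F)), and the rigidity of the conclusion, namely the exact value $b_{d-1}=\lfloor\beta\rfloor+1$ together with the simultaneous nonnegativity and bounded sum of the remaining coefficients. The difficulty is that (PF) is a statement about \emph{all} of $\mathbb{N}_{0}[1/\beta]$ at once, so one cannot read the coefficients off finitely many test elements without an argument that the normalization of every positive input is controlled by the SRS $\tau$; converting ``every positive input normalizes'' into rigid algebraic constraints on the minimal polynomial is the delicate core. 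On the converse side the parallel difficulty is proving termination of the carry process uniformly, i.e.\ producing a potential that provably decreases on arbitrarily long nonnegative digit strings.
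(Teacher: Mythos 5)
You should know at the outset that the paper contains no proof of Theorem \ref{pf} to compare against: it is quoted verbatim from Akiyama \cite{akiyama2} and used as a black box (in Remark \ref{rem:4(-4)2}, in the proof of Proposition \ref{CPcase}, and in the final corollary). So your attempt can only be judged on its own merits, and there it has genuine gaps at exactly the load-bearing points.

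On the converse, your root localization ($P(B-1)<0<P(B)$) and the Rouch\'e step are correct and complete, and the potential you promise exists in the simplest possible form: a full carry via $B\beta^{k}=\beta^{k+1}+\sum_{i=2}^{d}a_{i}\beta^{k+1-i}$ changes the \emph{plain} digit sum by $1+\sum_{i=2}^{d}a_{i}-B<0$. But two steps are asserted rather than proven. First, that $d_{\beta}(1)$ is infinite (``the orbit enters a nonzero cycle'') is offered with no computation, and your route to $\neg$(F) rests entirely on it via Theorem \ref{charaPF}. Second, and more seriously, termination of carries only produces \emph{some} finite representation with digits in $\lbc0,B-1\rbc$, which is strictly weaker than Parry admissibility (Theorem \ref{parry}); nothing in your sketch converts ``$x$ has a finite nonnegative representation'' into ``the greedy expansion of $x$ is finite'', and that conversion is the actual content of (PF). One standard repair is to take, among finite nonnegative representations of $x$, one of minimal digit sum and lexicographically maximal, and show it must be admissible -- that is where the positivity of the $a_{i}$ genuinely enters, and it is absent from the proposal.

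On the forward implication the proposal is a heuristic, not a proof: ``the non-termination of $d_{\beta}(1)$ together with the termination of all positive normalizations forces $b_{d-1}=\lfloor\beta\rfloor+1$'' names a hoped-for conclusion without a mechanism. Ironically, you stand one step from a genuine argument that you do not take: since (PF) implies (F$_{1}$), the integer $\lfloor\beta\rfloor+1$ has a finite expansion $10.c_{1}\cdots c_{k}$ (Remark \ref{floor_beta+1}, for $\beta\geq(1+\sqrt{5})/2$), which immediately yields $\beta^{k+1}-(\lfloor\beta\rfloor+1)\beta^{k}+\sum_{i=1}^{k}c_{i}\beta^{k-i}=0$ with $c_{i}\geq 0$, i.e.\ a polynomial of exactly the claimed shape having $\beta$ as a root. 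What remains -- and what your outline defers under ``the delicate core'' rather than supplies -- is the passage from \emph{some} polynomial of this shape to the \emph{minimal} polynomial having this shape, together with $a_{d}>0$ and $\sum_{i=2}^{d}a_{i}<\lfloor\beta\rfloor$. Your claim that the sum bound ``is the admissibility condition'' is not correct as stated: admissibility bounds individual digits by $\lfloor\beta\rfloor$, not their sum. These rigidity steps are precisely Akiyama's contribution in \cite{akiyama2}, and the proposal does not contain them.
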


\subsection{Proof of Theorem \ref{main1}}\label{subsec:main1}

In this subsection, let $\beta>1$ be the algebraic integer with minimal polynomial 
$$x^{3}-2tx^{2}+2tx-t \ (t \in \mathbb{N} \cap [2,\infty)).$$

\begin{Remark}\label{rem:4(-4)2}
$\beta$ is a Pisot number and does not have property (PF). 
\end{Remark}

\begin{proof}
By Lemma \ref{cp} and Theorem \ref{pf}. 
\end{proof}

So in order to prove Theorem \ref{main1}, it suffices to show that the above $\beta$ satisfies conditions of Theorem \ref{main2}. 

\begin{Obs}\label{obs:4(-4)2}
 \ 
\begin{multicols}{3}
\begin{description}
\item{(1)} \ $\beta<2t-1$. 
\item{(2)} \ $\beta>2t-2$. 
\end{description}
\end{multicols}
\end{Obs}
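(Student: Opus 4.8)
The plan is to locate $\beta$ by evaluating the minimal polynomial $f(x) := x^{3}-2tx^{2}+2tx-t$ at the two candidate endpoints $2t-2$ and $2t-1$, reading off the signs, and then using the Pisot property from Remark \ref{rem:4(-4)2} to guarantee that the root detected this way is $\beta$ itself. The key structural fact is that a Pisot number is the \emph{unique} root of its minimal polynomial lying in $[1,\infty)$, since every conjugate has modulus strictly less than $1$. Consequently $f$ has exactly one zero in $(1,\infty)$, namely $\beta$; and since $f(1)=1-t<0$ for $t\geq 2$ while $f(x)\to+\infty$, this forces $f<0$ on $(1,\beta)$ and $f>0$ on $(\beta,\infty)$.

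The two evaluations are routine expansions. I expect to obtain
$$f(2t-1)=t-1,$$
which is positive for $t\geq 2$, and
$$f(2t-2)=-4t^{2}+11t-8.$$
To see that the latter is negative for every $t$, I would observe that $4t^{2}-11t+8$ has discriminant $11^{2}-4\cdot 4\cdot 8=-7<0$, so $4t^{2}-11t+8>0$ identically and hence $f(2t-2)<0$ for all $t$.

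To finish, note that for $t\geq 2$ both endpoints satisfy $2t-2\geq 2>1$, so they lie in the region $(1,\infty)$ on which $f$ changes sign exactly once, at $\beta$. Since $f(2t-2)<0$ places $2t-2$ in $(1,\beta)$, we get $\beta>2t-2$, which is (2); since $f(2t-1)>0$ places $2t-1$ in $(\beta,\infty)$, we get $\beta<2t-1$, which is (1).

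The one genuine subtlety — the main obstacle — is justifying that the sign data at $2t-2$ and $2t-1$ really locate these points relative to $\beta$, rather than relative to some smaller real root of $f$. This is exactly where the Pisot property is indispensable: because all conjugates of $\beta$ lie in the open unit disk, every real root other than $\beta$ is less than $1$, so the whole interval $(1,\infty)$ contains no root but $\beta$ and the sign of $f$ there is monotone across $\beta$. Everything else reduces to elementary polynomial arithmetic.
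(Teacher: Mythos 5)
Your proof is correct, and it takes a genuinely different route from the paper's. You evaluate $f(x)=x^{3}-2tx^{2}+2tx-t$ at the two integers $2t-2$ and $2t-1$ (the computations $f(2t-1)=t-1$ and $f(2t-2)=-4t^{2}+11t-8$ are right, and the discriminant argument correctly shows $f(2t-2)<0$ for all $t$), and then you localize $\beta$ by the intermediate value theorem together with the observation that, since all conjugates of a Pisot number lie in the open unit disk, $\beta$ is the only root of its minimal polynomial in $(1,\infty)$, so $f$ changes sign there exactly once. You are also right to flag this localization step as the point where the Pisot property (Remark \ref{rem:4(-4)2}, which precedes the Observation and does not depend on it) is genuinely needed. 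The paper instead works entirely with the defining relation $1=2t\beta^{-1}-2t\beta^{-2}+t\beta^{-3}$: it rewrites $\beta=2t-1-t\beta^{-2}+t\beta^{-3}=2t-1-t\beta^{-3}(\beta-1)$ to get the upper bound, and substitutes the identity once more to get the lower bound, using $\beta>|c|=t\geq 2$ from Observation \ref{c_ineq} in place of your root-localization step. Your version is more mechanical and makes the logical role of the Pisot hypothesis explicit; the paper's version stays closer to the $\beta^{-1}$-expansion manipulations it uses throughout (e.g.\ in the proof of Remark \ref{lambda}) and avoids evaluating the polynomial at specific points, at the cost of some less transparent telescoping.
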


\begin{proof}
Notice that 
\begin{equation}\label{eq:obs-2}
1=2t\beta^{-1}-2t\beta^{-2}+t\beta^{-3}. 
\end{equation}
Therefore we have 
\begin{equation*}
\beta =2t-2t\beta^{-1}+t\beta^{-2} 
= 2t-1-t\beta^{-2}+t\beta^{-3} =: h. 
\end{equation*}
So 
$$h = 2t-1-t\beta^{-3}(\beta-1)<2t-1-t\beta^{-3} < 2t-1.$$
Thus (1) holds. On the other hand, 
we have by \eqref{eq:obs-2} and Observation \ref{c_ineq}, 
\begin{equation*}
\begin{split}
h &= 2t-2 + 2t\beta^{-1} - 3t\beta^{-2}+2t\beta^{-3} \\
&= 2t-2 + (2t-1)\beta^{-1}-t\beta^{-2}+t\beta^{-4} \\
&> 2t-2 + (2t-2)\beta^{-1}+t\beta^{-4} > 2t-2 \\
\end{split}
\end{equation*}
and hence (2) holds. 
\end{proof}

Now, note that 
$$\lambda(l_{1},l_{2}) 
= t(l_{1}-2l_{2})\beta^{-1}+tl_{2}\beta^{-2}.$$

\begin{Remark}\label{lambda}
 \ 
\begin{description}
\item{(1)} \ $0<\lambda(2,1)<\lambda(1,0) 
<\lambda(3,1)<1$. 
\item{(2)} \ $0<\lambda(-3,-2)< \lambda(-1,-1) 
< \lambda(-2,-2)<1$. 
\item{(3)} \ $1<\lambda(0,-1)<\lambda(2,0) 
<\lambda(1,-1)<2$. 
\item{(4)} \ $1<\lambda(-3,-3)<\lambda(-1,-2)<2$. 
\item{(5)} \ $2<\lambda(-2,-3)<\lambda(0,-2)<3$. 
\end{description}
\end{Remark}

\begin{proof}
See Appendix. 
\end{proof}

Note that for nonzero vector $\wl=(l_{1},l_{2})$, $\tau^{*}(\wl)=(l_{2},-\lfloor\lambda(\wl)\rfloor-1)$. 
So by using Remark \ref{lambda}, all elements of $Q_{\beta}$ can be enumerated. Indeed, we find
\begin{equation}\label{state_set}
\begin{split}
Q_{\beta} = 
&\{ \pm(3,2), \pm(1,1), \pm(2,2), \pm(2,1), \pm(1,0), \pm(3,1), \pm(0,1), \\
&\ \pm(2,0), \pm(1,-1), \pm(3,3), \pm(1,2), \pm(2,3), \pm(0,2), (0,0)\} \\
\end{split}
\end{equation}
and, letting $(l_{1},l_{2}) \longrightarrow (l_{2},l_{3})$ denote  $\tau(l_{1},l_{2}) = (l_{2},l_{3})$, we have the following directed graph (Figure \ref{orbit}). 
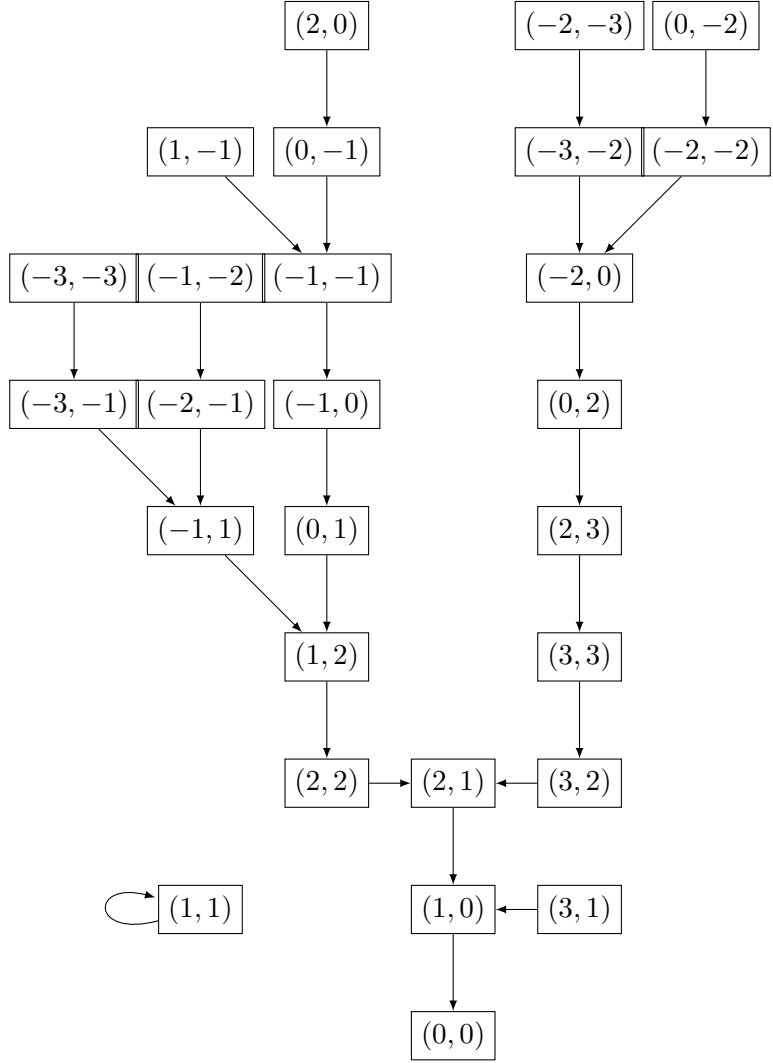
\begin{figure}[h]
\caption{all orbits by $\tau$ of each element of $Q_{\beta}$}\label{orbit}
\begin{center}
\scalebox{1.0}{
\begin{tikzpicture}[node distance = 1.68cm]

\node[rectangle,draw] (q0) {$(0,0)$} ; 
\node[rectangle,draw] (q1) [above of = q0] {$(1,0)$} ; 
\node[rectangle,draw] (q2) [above of = q1] {$(2,1)$} ; 
\node[rectangle,draw] (q3) [left of = q2] {$(2,2)$} ; 
\node[rectangle,draw] (q4) [above of = q3] {$(1,2)$} ; 
\node[rectangle,draw] (q5) [above of = q4] {$(0,1)$} ; 
\node[rectangle,draw] (q6) [above of = q5] {$(-1,0)$} ; 
\node[rectangle,draw] (q7) [above of = q6] {$(-1,-1)$} ; 
\node[rectangle,draw] (q8) [above of = q7] {$(0,-1)$} ; 
\node[rectangle,draw] (q9) [above of = q8] {$(2,0)$} ; 

\node[rectangle,draw] (p1) [right of = q1] {$(3,1)$} ; 

\node[rectangle,draw] (r1) [right of = q2] {$(3,2)$} ; 
\node[rectangle,draw] (r2) [above of = r1] {$(3,3)$} ; 
\node[rectangle,draw] (r3) [above of = r2] {$(2,3)$} ; 
\node[rectangle,draw] (r4) [above of = r3] {$(0,2)$} ; 
\node[rectangle,draw] (r5) [above of = r4] {$(-2,0)$} ; 
\node[rectangle,draw] (r6) [above of = r5] {$(-3,-2)$} ; 
\node[rectangle,draw] (r7) [above of = r6] {$(-2,-3)$} ; 

\node[rectangle,draw] (u1) [left of = q5] {$(-1,1)$} ; 
\node[rectangle,draw] (u2) [above of = u1] {$(-2,-1)$} ; 
\node[rectangle,draw] (u3) [above of = u2] {$(-1,-2)$} ; 

\node[rectangle,draw] (v1) [left of = u2] {$(-3,-1)$} ; 
\node[rectangle,draw] (v2) [left of = u3] {$(-3,-3)$} ; 

\node[rectangle,draw] (s1) [right of = r6] {$(-2,-2)$} ; 
\node[rectangle,draw] (s2) [above of = s1] {$(0,-2)$} ; 

\node[rectangle,draw] (t1) [left of = q8] {$(1,-1)$} ; 

\node[rectangle] (w1) [left of = q1] {} ; 
\node[rectangle,draw] (w2) [left of = w1] {$(1,1)$} ; 

\path[->,>=latex] 
(q1) edge [left] node [right] {} (q0)
(q2) edge [left] node [right] {} (q1)
(q3) edge [left] node [right] {} (q2)
(q4) edge [left] node [right] {} (q3)
(q5) edge [left] node [right] {} (q4)
(q6) edge [left] node [right] {} (q5)
(q7) edge [left] node [right] {} (q6)
(q8) edge [left] node [right] {} (q7)
(q9) edge [left] node [right] {} (q8)

(p1) edge [left] node [above] {} (q1)

(r1) edge [left] node [right] {} (q2)
(r2) edge [left] node [right] {} (r1)
(r3) edge [left] node [right] {} (r2)
(r4) edge [left] node [right] {} (r3)
(r5) edge [left] node [right] {} (r4)
(r6) edge [left] node [right] {} (r5)
(r7) edge [left] node [right] {} (r6)

(u1) edge [right] node [left] {} (q4)
(u2) edge [left] node [right] {} (u1)
(u3) edge [left] node [right] {} (u2)

(v1) edge [left] node [left] {} (u1)
(v2) edge [left] node [left] {} (v1)

(s1) edge [left] node [right] {} (r5)
(s2) edge [left] node [right] {} (s1)

(t1) edge [left] node [left] {} (q7)

(w2) edge [loop left] node [left] {} (w2)

 ;

\end{tikzpicture}
}
\end{center}
\end{figure}

\begin{Remark}\label{11}
$Q_{\beta} \setminus F_{\beta} = P_{\beta}=\{(1,1)\}$. 
\end{Remark}

\begin{proof}
By Figure \ref{orbit}. 
\end{proof}

\begin{proof}[Proof of Theorem \ref{main1}]
Recall 
$$V_{\beta} = \left\{ - \sum_{n=1}^{r} \omega_{n}\tau^{n-1}(0,1) \ \Biggl| \ r \in \mathbb{N}, \omega_{n} \in \mathbb{N}_{0} \ (n \in \lbc1,r\rbc) \right\}.$$
By Theorem \ref{main2} and Remark \ref{11}, 
it suffices to show that 
\begin{multicols}{2}
\begin{description}
\item{(i)} \ $\tau^{-1}(1,1) \subset \{(1,1)\}$. 
\item{(ii)} \ $R_{0}= \lbc-1,1\rbc^{2} \cap V_{\beta} \subset F_{\beta}$. 
\end{description}
\end{multicols}

(i) Let $\zeta \in \mathbb{Z}$ satisfy $\tau(\zeta,1)=(1,1)$ and let $g:=\lambda(\zeta,1)= t(\zeta-2)\beta^{-1}+t\beta^{-2}$. Then $-1 \leq g <0$. By Observation \ref{c_ineq} and Observation \ref{obs:4(-4)2}-(1), 
$$-1 \leq g < (t \zeta-2t+1)\beta^{-1} < t\zeta \beta^{-1}-1.$$
Thus we get $\zeta >0$. On the other hand, since 
$$0> g >t(\zeta-2)\beta^{-1},$$
we have $\zeta<2$. So $\zeta=1$ and hence the proof of (i) is completed. 

(ii) Since 
\begin{equation}\label{main1-1}
(0,1) \longrightarrow (1,2) \longrightarrow (2,2) \longrightarrow (2,1) 
\longrightarrow (1,0) \longrightarrow (0,0) 
\end{equation}
by Figure \ref{orbit}, we have $V_{\beta} \subset ((-\infty,0] \cap \mathbb{Z})^{2}$ and so 
$$R_{0} \subset \lbc-1,0\rbc^{2}.$$
Moreover by Figure \ref{orbit}, 
\begin{equation*}\label{main1-2}
(0,-1) \longrightarrow (-1,-1) 
\longrightarrow (-1,0) 
\longrightarrow (0,1).  
\end{equation*}
So by \eqref{main1-1}, $\lbc-1,0\rbc^{2} \subset F_{\beta}$. Hence $R_{0} \subset F_{\beta}$. 
\end{proof}

Since $\beta$ does not have property (PF), 
there is $x \in \mathbb{N}_{0}[1/\beta]$ such that 
$x$ does not have the finite beta-expansion. 
The following example is such $x$. 

\begin{Example}
Let 
$$x := (2t-2)+(2t-2)\beta^{-1}+(t-1)\beta^{-2}+(t-1)\beta^{-4}
+(t-1)\beta^{-5}+(t-1)\beta^{-6} \in \mathbb{N}_{0}[1/\beta].$$
Then $x$ does not have the finite beta-expansion. Indeed, 
by using $1000 \overset{\nu}{=}0(2t)(-2t)t$ 
and $2000\overset{\nu}{=}0(4t)(-4t)(2t)$, we have
\begin{equation*}
\scalebox{0.85}{$\displaystyle 
\begin{array}{cccccccccccccc}
d_{\beta}(\beta^{-2}x) 
&\overset{\nu}{=} 0 & (2t-2) & (2t-2) & (t-1) & 0 & (t-1) & (t-1) & (t-1) & 0 & 0 & 0 & 0 & 0^{\infty} \\
&\overset{\nu}{=} 1 & (-2) & (4t-2) & (-1) & 0 & (t-1) & (t-1) & (t-1) & 0 & 0 & 0 & 0 & 0^{\infty} \\
&\overset{\nu}{=} 1 & 0 & (-2) & (4t-1) & (-2t) & (t-1) & (t-1) & (t-1) & 0 & 0 & 0 & 0 & 0^{\infty} \\
&\overset{\nu}{=} 1 & 0 & 0 & (-1) & (2t) & (-t-1) & (t-1) & (t-1) & 0 & 0 & 0 & 0 & 0^{\infty} \\
&\overset{\nu}{=} 1 & 0 & 0 & 0 & 0 & (t-1) & (-1) & (t-1) & 0 & 0 & 0 & 0 & 0^{\infty} \\
&\overset{\nu}{=} 1 & 0 & 0 & 0 & 0 & (t-2) & (2t-1) & (-t-1) & t & 0 & 0 & 0 & 0^{\infty} \\
&\overset{\nu}{=} 1 & 0 & 0 & 0 & 0 & (t-2) & (2t-2) & (t-1) & (-t) & t & 0 & 0 & 0^{\infty} \\
&\overset{\nu}{=} 1 & 0 & 0 & 0 & 0 & (t-2) & (2t-2) & (t-2) & t & (-t) & t & 0 & 0^{\infty} \\
&\overset{\nu}{=} 1 & 0 & 0 & 0 & 0 & (t-2) & (2t-2) & (t-2) & (t-1) & t & (-t) & t & 0^{\infty} \\
\end{array}
$} 
\end{equation*}
Thus 
\begin{equation}\label{eq:ex1}
d_{\beta}(\beta^{-2}x) \overset{\nu}{=} 10 000 (t-2)(2t-2)(t-2)(t-1)^{\infty}. 
\end{equation}
Now, we can verify $d_{\beta}(1) = (2t-2)(2t-2)(t-1)00t0^{\infty}$. So 
$$d_{\beta}^{*}(1) = ((2t-2)(2t-2)(t-1)00(t-1))^{\infty}.$$
Therefore by Theorem \ref{parry}, 
the right hand side of \eqref{eq:ex1} 
belongs to $D_{\beta}$. Hence 
$$\mbox{the beta-expansion of $x$} 
= 10.000(t-2)(2t-2)(t-2)(t-1)^{\infty}.$$
\end{Example}

Theorem \ref{main2} also provides some other examples of $\beta$ with property (F$_{1}$) without (PF). 

\begin{Example}
Let $\beta>1$ be an algebraic 
integer with minimal polynomial $x^{3}-ax^{2}-bx-c$. 
Suppose that $(a,b,c) \in \{(5,-5,3),(6,-6,4),(7,-8,5)\}$. 
Then by computer calculations, we can check 
\begin{equation*}
\# Q_{\beta} = \left\{ 
\begin{array}{ll}
43 & \mbox{if $(a,b,c)=(5,-5,3)$} \\
67 & \mbox{if $(a,b,c)=(6,-6,4)$} \\
117 & \mbox{if $(a,b,c)=(7,-8,5)$} \\
\end{array}
\right. 
\end{equation*}
and 
$$P_{\beta}=\{(1,1)\}.$$
Moreover in these cases, we can verify that all orbits by $\tau$ of $(0,-1)$ are the same as the $\beta$ of Theorem \ref{main1}. That is, 
\begin{equation*}
(0,-1) \rightarrow (-1,-1) \rightarrow (-1,0) \rightarrow 
(0,1) \rightarrow (1,2) \rightarrow (2,2) \rightarrow (2,1) 
\rightarrow (1,0) \rightarrow (0,0). 
\end{equation*}
Therefore $R_{0}=\lbc-1,1\rbc \cap V_{\beta} \subset \lbc-1,0\rbc^{2} \subset F_{\beta}$ and so $\beta$ satisfies the conditions of Theorem \ref{main2}. 
Hence in each case, $\beta$ has property (F$_{1}$). 
\end{Example}

\subsection{Proof of Proposition \ref{CPcase}}\label{subsec:CPcase}

In this subsection, let $\beta>1$ be a cubic Pisot number with minimal polynomial 
$$x^{3}-ax^{2}-bx-c.$$
Frougny and Solomyak proved the following. 

\begin{Thm}[(\cite{frougny1})]\label{charaPF}
Let $\beta$ be a Pisot number with property (PF). 
Then $\beta$ has property (F) if and only if 
$d_{\beta}(1)$ is finite. 
\end{Thm}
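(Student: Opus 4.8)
The plan is to prove the two implications separately, dispatching the forward direction as a short formal consequence of the definitions and concentrating the real work on the converse.

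For the direction assuming property (F), the hypothesis (PF) is not even needed. Since $\beta$ is an algebraic integer with minimal polynomial $x^{d}-a_{d-1}x^{d-1}-\cdots-a_{0}$, dividing the defining relation by $\beta^{d-1}$ gives $\beta = a_{d-1}+\sum_{i=1}^{d-1}a_{d-1-i}\beta^{-i} \in \mathbb{Z}[1/\beta]$ (compare \eqref{eq:prop1}), and hence $T(1)=\beta-\lfloor\beta\rfloor \in \mathbb{Z}[1/\beta]\cap[0,1)$. Property (F) then forces $T(1)\in\mathrm{Fin}(\beta)$, and because $L(T(1))=0$ this says exactly that $d_{\beta}(T(1))$ is finite. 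Since $d_{\beta}(1)=\lfloor\beta\rfloor\, d_{\beta}(\{\beta\})=\lfloor\beta\rfloor\, d_{\beta}(T(1))$ as recorded in the introduction, $d_{\beta}(1)$ is finite. This settles the implication $(\mathrm{F})\Rightarrow d_{\beta}(1)$ finite.

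The substance is the converse: assuming (PF) and $d_{\beta}(1)$ finite, deduce (F). I would first reduce (F) to a subtraction-closure statement. Given $x\in\mathbb{Z}[1/\beta]\cap[0,\infty)$, multiply by a suitable power $\beta^{-K}$ (which preserves membership in $\mathrm{Fin}(\beta)$, as shifting an admissible expansion preserves both finiteness and the Parry admissibility of Theorem \ref{parry}) so that $\beta^{-K}x=\sum_{j\geq 0}a_{j}\beta^{-j}$ with $a_{j}\in\mathbb{Z}$. Separating positive and negative coefficients writes $\beta^{-K}x=P-N$ with $P,N\in\mathbb{N}_{0}[1/\beta]$ and $P\geq N\geq 0$; property (PF) gives $P,N\in\mathrm{Fin}(\beta)$. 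Thus it suffices to show that $\mathrm{Fin}(\beta)$ is closed under subtraction whenever the result is nonnegative.

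To perform the subtraction I would exploit the finiteness $d_{\beta}(1)=t_{1}\cdots t_{m}0^{\infty}$, which yields the \emph{finite} relation $\beta^{m}=t_{1}\beta^{m-1}+\cdots+t_{m}$, equivalently $\beta^{-j}=\sum_{i=1}^{m}t_{i}\beta^{-j-i}$, with all $t_{i}\in\mathbb{N}_{0}$. Writing $P$ and $N$ as finite admissible words and subtracting termwise produces a finite integer-valued word of the correct $\nu$-value but with possibly negative or oversized digits; the relation above then acts as a borrow/carry rule that rewrites such digits into admissible ones with only finite leftward and rightward spread. This is a terminating analogue of the carry formula of Observation \ref{obs2}, with the infinite tail $d_{\beta}^{*}(1)$ replaced by the finite word $t_{1}\cdots t_{m}$. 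The main obstacle, and the place where the Pisot hypothesis is indispensable, is proving that this normalization halts: I would control it by tracking the intermediate remainders under the Galois conjugate embeddings, using that all conjugates have modulus $<1$ to bound them and argue they range over a finite set — precisely the finiteness phenomenon that makes $Q_{\beta}$ finite in Section \ref{sec:main2}. Once termination is established, the resulting admissible finite word is the beta-expansion of $P-N$, yielding $P-N\in\mathrm{Fin}(\beta)$ and hence (F), which completes the proof.
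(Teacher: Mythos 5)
Your forward direction is fine (and you are right that it needs neither (PF) nor the Pisot hypothesis). The converse, however, has a genuine gap at its core step. After the legitimate reduction via (PF) to showing $P-N\in\mathrm{Fin}(\beta)$ for $P,N\in\mathrm{Fin}(\beta)$ with $P\geq N$, you subtract the two admissible words digitwise, obtaining a word with negative or oversized digits, and propose to normalize it using $\beta^{m}=t_{1}\beta^{m-1}+\cdots+t_{m}$, with termination controlled by bounding the Galois conjugates of the intermediate remainders. But bounding the conjugates only shows that the intermediate states range over a finite set, hence that the procedure is \emph{eventually periodic}; it does not rule out a nontrivial cycle, i.e.\ it does not show the resulting expansion is finite. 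That finiteness is precisely the content of the theorem, so the ``main obstacle'' you flag is not overcome but merely restated. Worse, your normalization step uses only ``Pisot $+$ $d_{\beta}(1)$ finite'' and never (PF); the identical argument applied to \emph{addition} of two admissible words would then prove (PF) from these hypotheses alone, which is false --- the paper's own main example, the root of $x^{3}-2tx^{2}+2tx-t$, is Pisot with $d_{\beta}(1)=(2t-2)(2t-2)(t-1)00t0^{\infty}$ finite and yet fails (PF). So the approach as described cannot be completed without injecting (PF) into the subtraction step itself. (Note the paper itself quotes this theorem from \cite{frougny1} without proof, so there is no internal proof to compare against.)

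The standard Frougny--Solomyak argument avoids the termination issue entirely: reduce to subtracting a single power, i.e.\ to showing $x-\beta^{-n}\in\mathrm{Fin}(\beta)$ for $x\in\mathrm{Fin}(\beta)$ with $x\geq\beta^{-n}$; the general $N=\sum_{i}\beta^{-n_{i}}$ is then handled one term at a time, all partial differences staying nonnegative since each partial sum of the terms of $N$ is at most $N\leq P$. Starting from the finite greedy expansion of $x$, one \emph{borrows} using $1=\sum_{i=1}^{m}t_{i}\beta^{-i}$: replace one unit at the lowest nonzero position $p>-n$ by $t_{1}$ at position $p-1$ plus lower-order terms, note $t_{1}=\lfloor\beta\rfloor\geq 1$, and repeat finitely many times (at most $p+n$ steps) until position $-n$ carries a positive digit. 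All digits stay in $\mathbb{N}_{0}$ throughout, so after decrementing position $-n$ the result lies in a rescaled copy of $\mathbb{N}_{0}[1/\beta]$, and (PF) --- not a termination argument --- delivers membership in $\mathrm{Fin}(\beta)$. That is where the hypothesis (PF) genuinely enters, and it is the piece your proposal is missing.
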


By Theorem \ref{charaPF}, if $\beta$ has property (PF) without (F), 
then $d_{\beta}(1)$ is not finite. 
So we want to show that if $d_{\beta}(1)$ is not finite, 
then $\beta$ has property (PF) without (F) or does not have property (F$_{1}$). 

Bassino determined $d_{\beta}(1)$ for each cubic Pisot number $\beta$. 
In \cite{bassino}, it is shown that $d_{\beta}(1)$ is not finite 
if and only if $(a,b,c)$ satisfies one of the followings: 
\begin{description}
\item{(I)} \ $0 < b \leq a$ and $c<0$ 
\item{(II)} \ $-a < b \leq 0$ and $b+c <0$ 
\item{(III)} \ $b \leq -a$ and 
$b(k-1)+c(k-2) > (k-2)-(k-1)a$ where $k \in \lbc2,a-2\rbc$ such that, 
denoting $e_{k}=1-a+(a-2)/k$, $e_{k} \leq b+c < e_{k-1}$. 
\end{description}
Moreover in \cite{bassino}, we see that 
\begin{equation}\label{floor_beta_val}
\lfloor \beta \rfloor = \left\{ 
\begin{array}{ll}
a & \mbox{if $(a,b,c)$ satisfies (I)} \\
a-1 & \mbox{if $(a,b,c)$ satisfies (II)} \\
a-2 & \mbox{if $(a,b,c)$ satisfies (III).} \\
\end{array}
\right. 
\end{equation}
So, since $\lfloor \beta \rfloor = a+ \lfloor b\beta^{-1}+c\beta^{-2} \rfloor$, 
we have 
\begin{equation}\label{eq:init_val}
\lambda(\wl_{I}) = b\beta^{-1}+c\beta^{-2} \in \left\{ 
\begin{array}{ll}
(0,1) & \mbox{if $(a,b,c)$ satisfies (I)} \\
(-1,0) & \mbox{if $(a,b,c)$ satisfies (II)} \\
(-2,-1) & \mbox{if $(a,b,c)$ satisfies (III).} 
\end{array}
\right. 
\end{equation}
Here, recall 
$$\wl_{I}=(0,1).$$

\begin{Lem}\label{CPcase_lem}
 \ 
\begin{description}
\item{(1)} \ If $(a,b,c)$ satisfies (I), (II) or (III), then $\beta \geq 2$. 
\item{(2)} \ Let $(a,b,c)$ satisfy (I), (II) with $c>0$, or (III). 
Then $\lfloor \beta \rfloor +1$ does not have the finite beta-expansion. 
\end{description}
\end{Lem}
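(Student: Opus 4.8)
The plan is to handle the two parts by different means: part (1) by comparing $\beta$ with $\lfloor\beta\rfloor$, and part (2) by reducing to an orbit computation for the shift radix system $\tau$. For part (1), I would combine \eqref{floor_beta_val}, which gives $\lfloor\beta\rfloor = a,\,a-1,\,a-2$ in cases (I), (II), (III), with the trivial bound $\beta\geq\lfloor\beta\rfloor$. In case (III) the index $k$ ranges over $\lbc 2,a-2\rbc$, a nonempty set only when $a\geq 4$, so $\beta\geq a-2\geq 2$ with nothing further to check. In case (I), Lemma \ref{cp} yields the integral bound $|c|\leq a-b$; since $b\geq 1$ and $|c|\geq 1$ this forces $a\geq b+|c|\geq 2$, hence $\beta\geq a\geq 2$. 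In case (II), one checks that for $a\leq 2$ the Pisot inequality $|b-1|<a+c$ and the case hypothesis $b+c<0$ together leave no admissible integer $c$, so $a\geq 3$ and $\beta\geq a-1\geq 2$.

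For part (2), part (1) gives $\beta\geq 2\geq(1+\sqrt5)/2$, so Remark \ref{floor_beta+1} reduces the claim to $-\wl_{I}\notin F_{\beta}$. The key fact is that in all three cases $d_{\beta}(1)$ is \emph{not} finite --- this is precisely the characterization from \cite{bassino} recalled above --- and by Remark \ref{T1_exp} together with the bijectivity of $\{\lambda\}$ this is equivalent to $\wl_{I}\notin F_{\beta}$. My plan is therefore to show that the $\tau$-orbit of $-\wl_{I}=(0,-1)$ passes through $\wl_{I}=(0,1)$. Granting this, $-\wl_{I}\in F_{\beta}$ if and only if $\wl_{I}\in F_{\beta}$, and the latter fails, which completes the proof.

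The orbit computation is driven by \eqref{eq:init_val}, which fixes $\lfloor\lambda(\wl_{I})\rfloor$ and hence $\tau(0,-1)$ in each case, and by Observation \ref{c_ineq} ($|c|<\beta$). In case (II) with $c>0$ it is immediate: $(0,-1)\to(-1,0)\to(0,1)$, since $\lambda(-1,0)=-c\beta^{-1}\in(-1,0)$. In case (I) the orbit is $(0,-1)\to(-1,1)\to(1,0)\to(0,1)$; the one delicate step is $\lambda(-1,1)\in(0,1)$, which I would get from the bound $|c|\leq a-b$ above via $\lambda(-1,1)=(b+|c|)\beta^{-1}-|c|\beta^{-2}\leq a\beta^{-1}<1$ (using $\beta>a$). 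In case (III) I would first note $c\geq 2$ --- the lower bound $b+c\geq e_{k}\geq 2-a$ together with $b\leq -a$ forces it --- so that the orbit reads $(0,-1)\to(-1,-1)\to(-1,0)\to(0,1)$, with the last step exactly as in case (II).

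The step I expect to be the main obstacle is the remaining verification $\lambda(-1,-1)\in(0,1)$ in case (III), that is, $\beta^{2}+(b+c)\beta+c>0$. Unlike the case (I) estimate, this does not follow from the coarse Pisot inequalities, and it seems to require the full defining inequality of case (III), namely $b(k-1)+c(k-2)>(k-2)-(k-1)a$, linked to the position of $b+c$ relative to the thresholds $e_{k}$. Turning that inequality into the required bound on $\lambda(-1,-1)$ --- and ruling out that the orbit branches elsewhere before reaching $\wl_{I}$ --- is where the real work lies; once this is settled, the remaining floor evaluations are routine given \eqref{eq:init_val}, Observation \ref{c_ineq}, and Lemma \ref{cp}.
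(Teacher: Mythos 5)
Your strategy coincides with the paper's on both parts: (1) is proved by showing $\lfloor\beta\rfloor\geq 2$ case by case from Lemma \ref{cp}, \eqref{floor_beta_val} and Observation \ref{c_ineq}, and (2) is reduced via part (1) and Remark \ref{floor_beta+1} to showing $-\wl_{I}\notin F_{\beta}$, which follows once the $\tau$-orbit of $-\wl_{I}$ is shown to pass through $\wl_{I}$ (not in $F_{\beta}$ because $d_{\beta}(1)$ is not finite, by Remark \ref{T1_exp} and the bijectivity of $\{\lambda\}$). The three orbits you list are exactly the ones in the paper, and your case-by-case arguments for (1) and for the floor evaluations in cases (I) and (II) are sound.

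The one step you leave open --- $\lambda(-1,-1)\in(0,1)$ in case (III) --- is where your proposal is incomplete, and your assessment of it is off: it does \emph{not} require the threshold inequality $b(k-1)+c(k-2)>(k-2)-(k-1)a$ beyond the role that inequality already plays in establishing \eqref{floor_beta_val} and \eqref{eq:init_val}. Write $\lambda(1,1)=\lambda(1,0)+\lambda(\wl_{I})=(b+c)\beta^{-1}+c\beta^{-2}$. For the upper bound, $\lambda(1,0)=c\beta^{-1}<1$ by Observation \ref{c_ineq}, while $\lambda(\wl_{I})<-1$ by \eqref{eq:init_val}, so $\lambda(1,1)<0$. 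For the lower bound, Lemma \ref{cp} gives $1-b<a+c$, hence $b+c\geq 2-a$ by integrality, and $c>0$ as you observed, so
$$\lambda(1,1)>(2-a)\beta^{-1}=-\lfloor\beta\rfloor\beta^{-1}>-1$$
using $\lfloor\beta\rfloor=a-2<\beta$. Therefore $\lambda(-1,-1)=-\lambda(1,1)\in(0,1)$, the orbit $(0,-1)\rightarrow(-1,-1)\rightarrow(-1,0)\rightarrow(0,1)$ is as claimed, and there is no ``branching'' to rule out since $\tau$ is a single-valued map once each floor is evaluated. With this estimate supplied, your argument is complete and agrees with the paper's proof.
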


In order to prove Lemma \ref{CPcase_lem}, 
we need the following remark. 

\begin{Remark}\label{CPcase_lambda}
 \ 
\begin{description}
\item{(1)} \ Suppose that $(a,b,c)$ satisfies (I). Then 
$0<\lambda(\wl_{I})< \lambda(-1,1)<1$. 
\item{(2)} \ Suppose that $(a,b,c)$ satisfies (II) with $c>0$. Then 
$-1<\lambda(\wl_{I}) < 0 < \lambda(1,0)<1$. 
\item{(3)} \ Suppose that $(a,b,c)$ satisfies (III). Then 
$-2< \lambda(\wl_{I})<-1< \lambda(1,1) < 0$. 
\end{description}
\end{Remark}

\begin{proof}
See Appendix. 
\end{proof}

\begin{proof}[Proof of Lemma \ref{CPcase_lem}]
(1) It suffices to show $\lfloor \beta \rfloor \geq 2$. 
We first consider the case (I). 
By Lemma \ref{cp}, 
$$0 \leq b-1 \leq a+c-1 \leq a-2 =\lfloor \beta \rfloor-2.$$
Thus $2 \leq \lfloor \beta \rfloor$. 
Next, consider the case (II). 
Since $1-b < a+c$ by Lemma \ref{cp} and $b+c<0$, we have 
$$2 \leq a+b+c \leq a-1 = \lfloor \beta \rfloor.$$
Finally, consider the case (III). Note $c \leq \lfloor \beta \rfloor$ by Observation \ref{c_ineq}. So by \eqref{floor_beta_val}, 
$c \leq \lfloor \beta \rfloor = a-2 \leq -b-2$. 
Thus $b+c\leq -2$. 
Therefore we have 
$$2 \leq a+b+c \leq a-2 = \lfloor \beta \rfloor.$$

(2) By (1) and Remark \ref{floor_beta+1}, 
we want to show $-\wl_{I} \notin F_{\beta}$. 
Notice that $\wl_{I} \notin F_{\beta}$ by Remark \ref{T1_exp}. 
So it suffices to show 
$$\tau^{k}(-\wl_{I}) = \wl_{I} \ \mbox{for some $k$}.$$

$Case$ $(I)$. By Remark \ref{CPcase_lambda}-(1), 
$-\wl_{I} \longrightarrow (-1,1) 
\longrightarrow (1,0) 
\longrightarrow \wl_{I}$. 

$Case$ $(II)$ with $c>0$. By Remark \ref{CPcase_lambda}-(2), 
$-\wl_{I} \longrightarrow (-1,0) 
\longrightarrow \wl_{I}$. 

$Case$ $(III)$. By Remark \ref{CPcase_lambda}-(3), 
$-\wl_{I} \longrightarrow (-1,-1) 
\longrightarrow (-1,0) 
\longrightarrow \wl_{I}$. 
\end{proof}

\begin{proof}[Proof of Proposition \ref{CPcase}]
It suffices to show that $\beta$ has property (PF) without (F) or does not have property (F$_{1}$) in case (I), (II) or (III). Now in case (I), (III) or (II) with $c>0$, $\lfloor \beta \rfloor+1$ does not have the finite beta-expansion by Lemma \ref{CPcase_lem}-(2) and so $\beta$ does not have property (F$_{1}$). Consider the case (II) with $c<0$. Note that $1-b-c < a$ by Lemma \ref{cp}. So by Theorem \ref{pf}, $\beta$ has property (PF) without (F). 
\end{proof}

\subsection{No cubic Pisot unit which has property (F$_{1}$) without (PF)}

First, Akiyama characterized cubic Pisot units with property (F). 

\begin{Thm}[(\cite{akiyama1})]\label{cpuF}
Let $\beta$ be a cubic Pisot unit. 
Then $\beta$ has property (F) if and only if $\beta$ is a 
root of the following polynomial with integer coefficients: 
$$x^{3}-ax^{2}-bx-1, \ a \geq 0 \ \mbox{and} \ -1 \leq b \leq a+1.$$
\end{Thm}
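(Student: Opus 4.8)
The plan is to recast property (F) as a finiteness statement for the shift radix system $\tau$ and then to analyze the $\tau$-dynamics as the coefficients vary. Since $\beta$ is a cubic Pisot unit, its minimal polynomial is $x^{3}-ax^{2}-bx-c$ with $c=\pm 1$, and Lemma \ref{cp} together with Observation \ref{c_ineq} fixes the admissible pairs $(a,b)$; note also that $a\ge 0$ is automatic, since $a$ is the sum of the three conjugates and the two small ones contribute more than $-2$. Because $\beta$ is Pisot we have $\mathbb{Z}[1/\beta]=\mathbb{Z}[\beta]$, so every $\beta$-fractional part lies in $\mathbb{Z}[\beta]\cap[0,1)$; via the conjugacy $\{\lambda\}\colon \mathbb{Z}^{2}\to\mathbb{Z}[\beta]\cap[0,1)$ between $\tau$ and $T$, it is standard that property (F) is equivalent to $F_{\beta}=\mathbb{Z}^{2}$, i.e. to every integer vector being driven to $\bzero$ under iteration of $\tau$. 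As $\tau$ is contractive for Pisot $\beta$, every orbit stays bounded and is eventually periodic, so the set of $\tau$-periodic points is finite; hence $F_{\beta}=\mathbb{Z}^{2}$ holds if and only if $\bzero$ is the only $\tau$-periodic point. This reduces the theorem to: $\tau$ has no cycle other than $\{\bzero\}$ if and only if $c=1$ and $-1\le b\le a+1$.

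For the ``if'' direction I would assume $c=1$ and $-1\le b\le a+1$ and rule out nontrivial cycles. On the bulk of this region the known sufficient conditions apply directly: for $1\le b\le a$ the coefficients are monotone, so $\beta$ is of Frougny--Solomyak type, while for $0\le b\le a-2$ it is of Hollander type, and in both cases (F) follows at once. The remaining parameters---the boundary values $b=-1$ and $b=a+1$ together with the few small sporadic pairs left uncovered---I would treat by direct SRS analysis: the contractivity estimate for $\tau$ confines every periodic point to an explicit bounded box, and following the finitely many orbits inside the box (computing the relevant values $\lfloor\lambda(\cdot)\rfloor$ exactly as in Remark \ref{lambda}) one checks that each orbit lands on $\bzero$, leaving $\{\bzero\}$ as the unique cycle.

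For the ``only if'' direction I would produce a genuine cycle whenever $(a,b,c)$ lies outside the stated region, so that $F_{\beta}\neq\mathbb{Z}^{2}$ and (F) fails. When $c=-1$, I would exhibit a short periodic $\tau$-orbit avoiding $\bzero$ whose defining floor conditions, checked via Lemma \ref{cp} and Observation \ref{c_ineq}, hold for every admissible $a$. When $c=1$ but $b\le -2$---the only Pisot possibility not already inside the region, since Lemma \ref{cp} forces $b\le a+1$ while $b\ge 1-a$---I would similarly construct an explicit periodic orbit. Transporting such a cycle back through $\{\lambda\}$ yields a concrete element of $\mathbb{Z}[\beta]\cap[0,1)$ whose expansion is purely periodic, hence not finite.

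The main obstacle is the bookkeeping \emph{uniform in} $(a,b)$. Unlike the single-$\beta$ enumeration of $Q_{\beta}$ behind Figure \ref{orbit}, here one must control $\tau$ over an infinite family of parameters simultaneously: the ``if'' direction requires proving cycle-freeness for all admissible $a$ at once, and the ``only if'' direction requires cycle formulas whose floor evaluations $\lfloor\lambda(\cdot)\rfloor$ remain valid for all admissible $a$. Pinning down the exact threshold $-1\le b\le a+1$---in particular separating the cycle-free endpoint $b=-1$ from the cycle-producing range $b\le -2$, and likewise disposing of $c=-1$ entirely---is precisely where the floor computations, in the spirit of Remark \ref{lambda} and Remark \ref{CPcase_lambda}, demand the most care.
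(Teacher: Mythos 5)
This statement is not proved in the paper at all: Theorem \ref{cpuF} is imported verbatim from \cite{akiyama1}, so there is no internal proof to compare against. Judged on its own, your proposal is a reasonable and correct \emph{reduction}, but not a proof. The first step --- for a Pisot unit, (F) is equivalent to $F_{\beta}=\mathbb{Z}^{2}$, and by contractivity of $\tau$ this is equivalent to $\bzero$ being the only $\tau$-periodic point --- is sound and standard (it is essentially the SRS reformulation of \cite{akiyama3,survey}). Likewise, invoking the Frougny--Solomyak and Hollander criteria correctly disposes of most of the region $0\le b\le a$ with $c=1$ (though note that e.g.\ $(a,b,c)=(1,0,1)$ is covered by neither criterion, so your ``sporadic pairs'' are not confined to the boundary).

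The genuine gap is that everything the theorem actually asserts beyond those known criteria is deferred to steps you describe but do not carry out, and these are not finite verifications. The families $b=-1$ and $b=a+1$ (with $c=1$) are each an \emph{infinite} one-parameter family of Pisot units; ``following the finitely many orbits inside the box'' must be done uniformly in $a$, which requires explicit, $a$-dependent floor evaluations of $\lambda$ in the spirit of Remark \ref{lambda} together with a proof that the resulting orbit graph is independent of $a$ --- none of which is exhibited. Symmetrically, the ``only if'' direction requires producing, for every admissible $(a,b)$ with $c=-1$ and for every $c=1$, $1-a\le b\le -2$, an explicit nonzero $\tau$-cycle whose defining inequalities $\lfloor\lambda(\cdot)\rfloor=\cdot$ hold for all such parameters; no candidate cycle is written down. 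Since the entire content of the theorem (as opposed to the older sufficient conditions) lives exactly in these uncompleted cases, the proposal as it stands is a plausible programme rather than a proof. It is also worth noting that Akiyama's actual argument in \cite{akiyama1} predates the SRS formalism and works directly with $T$-orbits in $\mathbb{Z}[\beta]\cap[0,1)$ via estimates on the conjugate embeddings, so your route, if completed, would be a genuinely different (and arguably more mechanical) proof.
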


\begin{Remark}\label{cor:cpuF}
Let $\beta$ be a cubic Pisot unit with minimal polynomial 
$$x^{3}-ax^{2}-bx-c.$$
Then $\beta$ has property (F) if and only if $c=1$ and $b+c \geq 0$. 
\end{Remark}

\begin{proof}
If $\beta$ has property (F), then $c=1$ and $b+c=b+1 \geq 0$ by Theorem \ref{cpuF}. 
Now we want to show the opposite. Suppose that $c=1$ and $b+c \geq 0$. 
Then $b \geq -c=-1$. Note that $-a+1 \leq b \leq a+1$ by Lemma \ref{cp}. 
So we have $-1 \leq b \leq a+1$ and $a \geq 0$. 
Hence by Theorem \ref{cpuF}, $\beta$ has property (F). 
\end{proof}

In summary, we get the following corollary. 

\begin{Cor}
Let $\beta$ be a cubic Pisot unit with minimal polynomial 
$$x^{3}-ax^{2}-bx-c.$$
Then 
\begin{description}
\item{(1)} \ $\beta$ has property (F) if and only if $b+c \geq 0$ and $c=1$. 
\item{(2)} \ The following conditions are equivalent. 
\begin{description}
\item{(i)} \ $\beta$ has property (PF). 
\item{(ii)} \ $\beta$ has property (F$_{1}$). 
\item{(iii)} \ $(b+c)c \geq 0$ and $(b,c) \neq (1,-1)$. 
\end{description}
\end{description}
\end{Cor}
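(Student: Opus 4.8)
The plan is to reduce the statement to the case analysis of Subsection~\ref{subsec:CPcase} together with the characterization of (F) in Remark~\ref{cor:cpuF}. The key preliminary observation is that a cubic Pisot unit has $c=\pm1$: the constant term of the minimal polynomial is $-c$, so the product of the conjugates of $\beta$ equals $c$, and for a unit this norm has absolute value $1$. Accordingly I would split the entire argument on the two values $c=1$ and $c=-1$, and within each on the size of $b$, using the Pisot inequalities of Lemma~\ref{cp} to decide which of Bassino's cases (I)--(III) each sub-case belongs to.

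Part~(1) is exactly Remark~\ref{cor:cpuF}, so I would simply invoke it. For Part~(2), the implication (i)$\Rightarrow$(ii) is immediate since (PF) always implies (F$_{1}$) (see Figure~\ref{inclusion}), so it remains to prove (ii)$\Rightarrow$(iii) and (iii)$\Rightarrow$(i). I would establish both at once by deciding, in each of four sub-cases, which of the three properties actually hold. First I would simplify condition~(iii): when $c=1$ the inequality $(b+c)c\ge0$ reads $b\ge-1$ and the clause $(b,c)\neq(1,-1)$ is vacuous, whereas when $c=-1$ it reads $b\le0$.

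The four sub-cases proceed as follows. If $c=1$ and $b\ge-1$, then $b+c\ge0$, so $\beta$ has property (F) by Part~(1) and hence (PF) and (F$_{1}$); here (i), (ii) and (iii) all hold. If $c=1$ and $b\le-2$, the Pisot inequality $b>-a$ from Lemma~\ref{cp} places $(a,b,c)$ in Bassino's case (II) with $c>0$, so by the argument in the proof of Proposition~\ref{CPcase} (through Lemma~\ref{CPcase_lem}-(2) and Remark~\ref{floor_beta+1}) the number $\lfloor\beta\rfloor+1$ has no finite beta-expansion; thus (F$_{1}$) fails, and so does (PF). If $c=-1$ and $b\le0$, then $(a,b,c)$ lies in case (II) with $c<0$, and the proof of Proposition~\ref{CPcase} (through Theorem~\ref{pf}) shows that $\beta$ has (PF) without (F), so (i), (ii) and (iii) hold. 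Finally, if $c=-1$ and $b\ge1$, then $(a,b,c)$ lies in case (I), so again (F$_{1}$) fails by the same proof. In each sub-case the three conditions hold or fail together, which yields the equivalence. Throughout I would check, using the Pisot inequalities, that case (III) never arises for a unit: it would require $b\le-a$, contradicting $b>-a$ when $c=1$ and $b>2-a$ when $c=-1$.

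The substance is therefore confined to bookkeeping: translating condition~(iii) into an inequality on $b$ for each sign of $c$, and verifying the assignment of each sub-case to the correct Bassino case. The genuine mathematical content---that $\lfloor\beta\rfloor+1$ has no finite expansion in cases (I), (III) and (II) with $c>0$, and that (PF) without (F) holds in case (II) with $c<0$---is already established in Subsection~\ref{subsec:CPcase}, so I expect no real obstacle beyond matching the cases and confirming that case (III) is excluded.
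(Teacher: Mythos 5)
Your proposal is correct and follows essentially the same route as the paper: part (1) is Remark \ref{cor:cpuF}, and part (2) is the same case analysis on $c=\pm1$ and the range of $b$, using Lemma \ref{cp} to place each sub-case into Bassino's case (I) or (II) and then invoking the machinery of Subsection \ref{subsec:CPcase} (Theorem \ref{pf} for (PF) without (F), and Lemma \ref{CPcase_lem}-(2) for the failure of (F$_1$)). The only cosmetic difference is that you unwind the proof of Proposition \ref{CPcase} (citing Lemma \ref{CPcase_lem}-(2) directly) where the paper cites the proposition itself, and you organize the argument as four exhaustive sub-cases rather than two implications; both are sound.
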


\begin{proof}
By Remark \ref{cor:cpuF}, (1) holds. We want to show (2). Now it suffices to show that 
\begin{description}
\item{(I)} \ If $(b+c)c \geq 0$ and $(b,c) \neq (1,-1)$, then $\beta$ has property (PF). 
\item{(II)} \ If $(b+c)c < 0$ or $(b,c)=(1,-1)$, then $\beta$ does not have property (F$_{1}$). 
\end{description}

(I) Suppose that $(b+c)c \geq 0$ and $(b,c) \neq (1,-1)$. Since $c=1$ implies $b+c \geq 0$, it suffices to consider the case $c=-1$. Note that $b \leq 0$ because $b+c \leq 0$ and $b\neq 1$. So, since $1-b-c<a$ by Lemma \ref{cp}, $\beta$ has property (PF) without (F) by Theorem \ref{pf}. 

(II) Suppose that $(b+c)c<0$ or $(b,c)=(1,-1)$. 
First, consider the case $c=1$. In this case, $\beta$ does not have property (PF) by Theorem \ref{pf}. Moreover we see that $b<0$ because $b+c<0$. Therefore, since $-a<b$ by Lemma \ref{cp}, $(a,b,c)$ satisfies (II) and so $d_{\beta}(1)$ is not finite. 
Next, consider the case $c=-1$. Then $b > 0$ because $b+c \geq 0$. 
Therefore, since $b<a$ by Lemma \ref{cp}, $(a,b,c)$ satisfies (I) and so $d_{\beta}(1)$ is not finite. Hence by Proposition \ref{CPcase}, $\beta$ does not have property (F$_{1}$). 
\end{proof}

\section*{Appendix: Proof of Remarks}

\begin{proof}[Proof of Remark \ref{lambda}]
Notice that 
\begin{equation}\label{eq:lambda1}
\lambda(-1,-1)>0 
\end{equation}
because $\lambda(-1,-1) 
= t\beta^{-1}-t\beta^{-2}=t\beta^{-1}(1-\beta^{-1})$. 

(1) First, we show $\lambda(3,1)<1$. 
Let $x:=1-\lambda(3,1)=1-t\beta^{-1}-t\beta^{-2}$. Then by \eqref{eq:obs-2}, 
\begin{equation*}
x = t\beta^{-1}-3t\beta^{-2}+t\beta^{-3} 
= (t-1)\beta^{-1}-t\beta^{-2}-t\beta^{-3}+t\beta^{-4} 
= (t-2)\beta^{-1}+x\beta^{-1}+t\beta^{-4}. 
\end{equation*}
So, since $x(1-\beta^{-1})=(t-2)\beta^{-1}+t\beta^{-4}>0$, 
we have $x > 0$. Thus 
$$\lambda(3,1) =1-x < 1.$$
Note that $\lambda(2,1) = t\beta^{-2}>0$. So 
$$\lambda(3,1) = \lambda(1,0) + \lambda(2,1) 
> \lambda(1,0).$$
By \eqref{eq:lambda1}, 
$$\lambda(2,1) = \lambda(1,0) + \lambda(-1,-1) 
< \lambda(1,0).$$

(2) Since $\beta>t\geq 2$, we have 
$$\lambda(-3,-2) 
= t\beta^{-1}-2t\beta^{-2} 
= t\beta^{-1}(1-2\beta^{-1}) > 0.$$
By (1), 
$$\lambda(-3,-2) 
= \lambda(-1,-1) - \lambda(2,1) 
< \lambda(-1,-1).$$
By \eqref{eq:lambda1}, 
$$\lambda(-2,-2) 
= \lambda(-1,-1) + \lambda(-1,-1) 
> \lambda(-1,-1).$$
So by \eqref{eq:obs-2}, 
$$\lambda(-2,-2) 
= 2t\beta^{-1}-2t\beta^{-2} 
= 1-t\beta^{-3} <1.$$

(3) Since $\beta<2t-1$ by Observation \ref{obs:4(-4)2}-(1), we have 
$$\lambda(0,-1) = 2t\beta^{-1}-t\beta^{-2} 
> (2t-1)\beta^{-1} > 1.$$
By (1), 
$$\lambda(2,0)= \lambda(0,-1)+\lambda(2,1) 
>\lambda(0,-1).$$
By \eqref{eq:lambda1}, 
$$\lambda(2,0) = \lambda(1,-1) - \lambda(-1,-1) 
< \lambda(1,-1).$$
By \eqref{eq:obs-2}, 
\begin{equation*}
\begin{split}
\lambda(1,-1) &= 3t\beta^{-1}-t\beta^{-2} \\
&= 2 - t\beta^{-1}+3t\beta^{-2}-2t\beta^{-3} \\
&= 2 - (t-1) \beta^{-1} + t\beta^{-2}-t\beta^{-4} \\
&< 2-(t-2) \beta^{-1} \\
&\leq 2. \\
\end{split}
\end{equation*}

(4) By (2) and (3), 
$$\lambda(-3,-3) = \lambda(-3,-2) + \lambda(0,-1)>1.$$
By (1), 
$$\lambda(-3,-3) 
= \lambda(-1,-2) - \lambda(2,1) 
< \lambda(-1,-2).$$
By \eqref{eq:obs-2}, 
$$\lambda(-1,-2) 
= 3t\beta^{-1}-2t\beta^{-2} 
= 1+t\beta^{-1}-t\beta^{-3} < 2.$$

(5) Since $\lambda(0,-2)<1+\lambda(2,0)$ by (2), we have by (3), 
$$\lambda(0,-2) < 1+\lambda(2,0) <3.$$
By (1), 
$$\lambda(-2,-3) 
= \lambda(0,-2) - \lambda(2,1) 
< \lambda(0,-2).$$
Since $2\beta^{-1} \leq t\beta^{-1}<1$, we have by \eqref{eq:obs-2}, 
\begin{equation*}
\lambda(-2,-3) -2 
= 4t\beta^{-1}-3t\beta^{-2}-2 
=t\beta^{-2}-2t\beta^{-3} = t\beta^{-2}(1-2\beta^{-1})>0. 
\end{equation*}
\end{proof}

\begin{proof}[Proof of Remark \ref{CPcase_lambda}]
(1) By \eqref{eq:init_val}, $\lambda(\wl_{I})>0$. 
Moreover, since $c<0$, we get 
$$\lambda(\wl_{I}) < -c\beta^{-1}+\lambda(\wl_{I}) 
= \lambda(-1,1) .$$
Now, we show $\lambda(-1,1)<1$. 
Since $b-1 < a+c$ by Lemma \ref{cp}, we have 
$b-c\leq a= \lfloor \beta \rfloor$ by \eqref{floor_beta_val}. So 
$$\lambda(-1,1) = (b-c)\beta^{-1}+c\beta^{-2} 
< \lfloor \beta \rfloor \beta^{-1} <1.$$

(2) By \eqref{eq:init_val}, $-1<\lambda(\wl_{I})<0$. 
Since $c>0$ and $c < \beta$ by Observation \ref{c_ineq}, 
$0<\lambda(1,0)=c\beta<1$. 

(3) By \eqref{eq:init_val}, $-2<\lambda(\wl_{I})<-1$. 
Since $c<\beta$ by Observation \ref{c_ineq}, 
we have $\lambda(1,0)<1$ and so 
$$\lambda(1,1) < 1+\lambda(\wl_{I}) <0.$$
Note that $b+c \geq 2-a$ by Lemma \ref{cp}. So $c >0$ beause $b \leq -a$. 
Moreover by \eqref{floor_beta_val}, 
$$\lambda(1,1) = (b+c)\beta^{-1}+c\beta^{-2} > -(a-2)\beta^{-1} = -\lfloor \beta \rfloor \beta^{-1} >-1.$$
\end{proof}

Fumichika Takamizo

OCAMI, Osaka Metropolitan University, 

3-3-138 Sugimoto, Sumiyoshi-ku Osaka, 558-8585, Japan

\end{document}